\providecommand{\noopsort[1]{}}
\numberwithin{equation}{section}
\newtheorem{thm}{Theorem}[section]
\newtheorem{cor}[thm]{Corollary}
\newtheorem{prop}[thm]{Proposition}
\newtheorem{lem}[thm]{Lemma}
\theoremstyle{remark}
\newtheorem{rem}[thm]{Remark}
\newtheorem{rems}[thm]{Remarks}
\newtheorem{hyp}[thm]{Hypothesis}
\newtheorem{example}[thm]{Example}
\theoremstyle{definition}
\newcommand{\coloneqq}{\mathrel{\mathop:}=}
\newcommand{\dx}{\:\mathrm{d}}
\renewcommand{\Re}{{\rm Re}\,}
\newcommand{\eps}{\varepsilon}
\DeclareMathOperator{\lh}{span}
\DeclareMathOperator{\ran}{ran}
\newcommand{\id}{I}
\newcommand{\one}{\mathds{1}}
\newcommand{\R}{\mathds{R}}
\newcommand{\N}{\mathds{N}}
\newcommand{\fix}{\mathrm{fix}}
\newcommand{\bA}{\mathbf{A}}
\newcommand{\bff}{\mathbf{f}}
\newcommand{\bT}{\mathbf{T}}
\newcommand{\bu}{\mathbf{u}}
\newcommand{\bC}{\mathbf{C}}
\newcommand{\cM}{\mathscr{M}}
\newcommand{\cS}{\mathscr{S}}
\newcommand{\cT}{\mathscr{T}}
\newcommand{\argument}{\,\cdot\,}
\begin{document}
\title[Stability of transition semigroups]{Stability of transition semigroups and applications to parabolic equations}
\author{Moritz Gerlach}
\address{Moritz Gerlach, Universit\"at Potsdam, Institut f\"ur Mathematik, Karl--Liebknecht--Stra{\ss}e 24–25, 14476 Potsdam, Germany}
\email{gerlach@math.uni-potsdam.de}
\author{Jochen Gl\"uck}
\address{Jochen Gl\"uck, Universit\"at Passau, Fakult\"at f\"ur Informatik und Mathematik, Innstra{\ss}e 33, 94032 Passau, Germany}
\email{jochen.glueck@uni-passau.de}
\author{Markus Kunze}
\address{Markus Kunze, Universit\"at Konstanz, Fachbereich Mathematik und Statistik, Fach 193, 78357 Konstanz, Germany}
\email{markus.kunze@uni-konstanz.de}

\begin{abstract}
	The paper deals with the long-term behavior of positive operator semigroups on spaces of bounded functions and of signed measures, which have applications to parabolic equations with unbounded coefficients and to stochastic analysis.
	The main results are a Tauberian type theorem characterizing the convergence to equilibrium of strongly Feller semigroups and a generalization of a classical convergence theorem of Doob. None of these results requires any kind of time regularity of the semigroup.
\end{abstract}

\subjclass[2020]{47D07, 60J35, 35K15}
\keywords{Transition probabilities, strong Feller property, asymptotic behavior, invariant measure, parabolic equations.}
\dedicatory{Dedicated with gratitude to our teacher Wolfgang Arendt}

\maketitle

\section{Introduction}

\subsection*{PDEs, Markov processes and semigroups}
There is a well known connection between parabolic partial differential equations, their (sub)-Markovian solution semigroups on the space of bounded continuous functions, and the associated stochastic process whose transition probability can be recovered from the semigroup -- which is why the latter is often referred to as the \emph{transition semigroup} of the process. This connection allows for an analytical approach to study Markov processes, see \cite{lorenzi2017} and the references therein for more information. 

In contrast to the classical theory of parabolic PDEs on bounded domains, the coefficients appearing in the parabolic equations associated to Markov processes are often unbounded; likewise, the associated semigroups are not strongly continuous, in general. Therefore, weaker time regularity concepts are used, such as pointwise continuity (sometimes referred to as \emph{stochastic continuity}) as was done in \cite{cerrai1994, priola1999}, or continuity with respect to the topology of uniform convergence on compact sets (see e.g.\ \cite{kuehnemund2003}). In some applications even pointwise continuity fails, for instance when considering nonlocal boundary conditions \cite{arendt_kunkel_kunze2016, kunze2020}. A treatment of time continuity properties of Markov semigroups on spaces of measures can be found in \cite{hille_worm2009}.

\smallskip

\subsection*{Convergence results and Doob's theorem}
An important aspect in the study of transition semigroups is the asymptotic behavior as $t \to \infty$.
As the semigroups in question are not strongly continuous, the rich theory for the long-term behavior of $C_0$-semigroups (\cite[Chapter V]{engel_nagel2000}) cannot be used, though. Instead, an important tool is a celebrated result of Doob \cite{doob1948}, later strengthened in \cite{stettner1994, seidler1997}, which gives convergence of stochastically continuous transition semigroups under a number of irreducibility and smoothing assumptions. The theorem is also treated in \cite[Section~4.2]{daprato_zabczyk1996}.

Doob's original proof is of probabilistic nature, as is a recent new proof in \cite{kulik_scheutzow2015}. In \cite{gerlach_nittka2012}, an analytical proof of Doob's theorem was given based on an older theorem of Greiner \cite[Korollar~3.11]{greiner1982}. On the other hand, this theorem of Greiner is the starting point for a series of convergence results for positive semigroups, first for semigroups on sequence spaces \cite{davies2005, keicher2006, wolff2008} and, subsequently, for semigroups of integral operators on function spaces \cite{arendt2008, gerlach2013, gerlach_glueck2017}. The most recent articles in this direction are \cite{gerlach_glueck2019, glueck_haase2019} where all time regularity assumptions on the semigroup were dropped and general semigroup representations on Banach lattices were considered.

\subsection*{Contributions of this article}
In this article, we leave the abstract setting of \cite{gerlach_glueck2019} and come back to actual transition semigroups. For a measurable space $\Omega$ we consider a dual pair of transition semigroups on the space $B_b(\Omega)$ of bounded measurable functions and the space $\mathscr{M}(\Omega)$ of signed measures. From the perspective of stochastic analysis, the semigroup on $B_b(\Omega)$ solves the Kolmogorov backward equation, while the semigroup on $\mathscr{M}(\Omega)$ solves the Kolmogorov forward equation (or Fokker--Planck equation).

The fact that $\mathscr{M}(\Omega)$ is a so-called \emph{AL-space} makes it possible to exploit the results of \cite{gerlach_glueck2019} in this concrete setting to prove the main results of the article: two Tauberian theorems (\ref{t.main-bdd-functions} and~\ref{t.main-cont-functions}), and a generalization of Doob's theorem (Theorem~\ref{t.doob-reloaded}).
As our proofs rely on the approach from \cite{gerlach_glueck2019}, no time regularity assumption is needed, and the results are valid for bounded semigroups of positive kernel operators, rather than only for Markovian semigroups. This is useful, for instance, when studying coupled systems of equations (see Sections~\ref{s.appl-irred-systems} and~\ref{s.appl-red-systems}).

\subsection*{Organization of the article}
Section 2 contains a convergence theorem for positive semigroups on general AL-spaces.
Sections~\ref{s.kernels}--\ref{s.doobs-theorem} focus on transition semigroups over measurable spaces: Section~\ref{s.kernels} contains basics about transition kernels and a duality result for weak convergence on $B_b(\Omega)$ and $\cM(\Omega)$, and Sections~\ref{s.norm-conv-and-tauber} and Section~\ref{s.strong-feller-semigroups} provide Tauberian theorems for transition semigroups. Section~\ref{s.doobs-theorem} gives a generalization of Doob's classical convergence theorem.
To demonstrate the usefulness of our results, we apply them to several classes of parabolic PDEs in Section~\ref{s.applications}.

\subsection*{Acknowledgements}

The authors are grateful to Markus Haase for pointing out to them the argument used in the proof of Proposition~\ref{p.fixed-space-of-contractions} in the appendix. 
The article \cite{denisov2005} that is referred to in Section \ref{s.appl-whole-space} was -- indirectly -- brought to the authors' attention via an answer to a reference request 
on MathOverflow \cite{mathoverflow_stability2019}.

\section{Stability of semigroups on AL-spaces} \label{s.stability-on-al-spaces}

In this section, the asymptotic behavior of bounded positive semigroups on so-called \emph{AL-spaces} is discussed. Recall that an AL-space is a Banach lattice $E$ such that the norm is additive on the positive cone, i.e., for $x,y \in E$ with
$x,y \geq 0$ one has $\|x+y\| = \|x\| + \|y\|$. Every $L^1$-space over an arbitrary measure space is an AL-space.
In the subsequent sections, the most important instance of an AL-space will be the space $\mathscr{M}(\Omega)$ of signed, finite measures on a measurable space $\Omega$. 
Recall that a bounded, positive operator $S$ on a Banach lattice $E$ is called \emph{AM-compact} if it maps order intervals to relatively compact sets.

\begin{thm}\label{t.alconvergence}
	Let $E$ be an AL-space and let $\mathscr{S}= (S_t)_{t\in (0,\infty)}$ be a bounded semigroup of positive operators on $E$. Assume that $S_{t_0}$ is AM-compact for some $t_0>0$ and that $\fix \mathscr{S}$ separates $\fix \mathscr{S}^*$.
	
	Then $S_t$ converges strongly as $t\to \infty$. 
\end{thm}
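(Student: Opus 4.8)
The strategy is to reduce the convergence of $(S_t)$ to a spectral/structural statement about a single mean ergodic projection, using the AM-compactness of $S_{t_0}$ together with the AL-geometry of $E$. First I would pass from the continuous semigroup to a cleverly chosen bounded operator: since $S_{t_0}$ is AM-compact and the semigroup is bounded, each $S_t$ with $t \geq t_0$ is AM-compact as well (a product of a bounded operator with an AM-compact one is AM-compact), so AM-compactness is available for a cofinal set of times. The key point I expect to exploit is that on an AL-space a bounded positive AM-compact operator has very rigid peripheral spectral behavior: its peripheral spectrum consists of poles, and the corresponding spectral subspaces are finite-dimensional lattice ideals, because order intervals are mapped into compact sets and the norm is additive on the positive cone. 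This should force the peripheral point spectrum to reduce to $\{1\}$ after averaging, which is where the separation hypothesis enters.

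Next I would bring in the hypothesis that $\fix(\mathscr{S})$ separates $\fix(\mathscr{S}^*)$. The natural device is the mean ergodic projection: consider the Cesàro (or Abel) averages $A_\tau = \frac{1}{\tau}\int_0^\tau S_t\,dt$. Using boundedness of the semigroup plus the compactness just discussed, one shows $(A_\tau)$ converges strongly to a projection $P$ onto $\fix(\mathscr{S})$ along the fixed space, with range $\fix(\mathscr{S})$ and kernel $\overline{\lh}\{x - S_t x : t > 0,\, x \in E\}$. The separation assumption is exactly what is needed to conclude that this kernel together with $\fix(\mathscr{S})$ spans a dense — in fact all of — $E$: if not, Hahn--Banach produces a nonzero $\varphi \in E^*$ annihilating both, and annihilating all $x - S_t x$ means $\varphi \in \fix(\mathscr{S}^*)$, while annihilating $\fix(\mathscr{S})$ contradicts separation. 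So $P$ is globally defined and $E = \fix(\mathscr{S}) \oplus \overline{\mathrm{ran}}(I - S_{t})$ in the appropriate sense.

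It then remains to upgrade mean ergodic convergence to genuine strong convergence of $S_t$ itself. This is the step I expect to be the main obstacle, since without any time-regularity or strong continuity assumption one cannot invoke the usual Banach-space ergodic theorems directly. Here I would lean on the structural theory behind Theorem~\ref{t.alconvergence}'s cited source \cite{gerlach_glueck2019}: the combination "positive + AM-compact + AL-space" is precisely the setting in which the peripheral spectrum of each $S_t$ is trivial (equal to $\{1\}$ when nonempty), so there is no nontrivial rotational part to obstruct convergence. Concretely, on the complemented kernel $\ker P$ the operators $S_t$ form a bounded positive semigroup whose fixed space is $\{0\}$ and which is still AM-compact for $t \geq t_0$; one argues that its orbits must converge to $0$, e.g. by showing the restriction has spectral radius considerations forcing $S_t x \to 0$ for $x$ in a dense subspace and then using boundedness to pass to all of $\ker P$. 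Combining $S_t \to \mathrm{id}$ on $\mathrm{ran}\, P = \fix(\mathscr{S})$ with $S_t \to 0$ on $\ker P$ yields $S_t \to P$ strongly.

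The delicate issues to handle carefully are: (i) verifying that AM-compactness, not compactness, suffices for the peripheral-spectrum rigidity on an AL-space — this is where the additivity of the norm on the positive cone is essential and is the technical heart inherited from \cite{gerlach_glueck2019}; and (ii) ensuring the decomposition $E = \fix(\mathscr{S}) \oplus \ker P$ is topologically valid, i.e.\ that $\fix(\mathscr{S})$ is closed and complemented — closedness is automatic, and complementedness comes from the mean ergodic projection once we know it is everywhere defined, which is exactly what the separation hypothesis delivers.
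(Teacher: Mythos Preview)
Your plan has two genuine obstacles that the paper's proof is specifically designed to avoid.

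First, the Ces\`aro averages $A_\tau = \frac{1}{\tau}\int_0^\tau S_t\,dt$ need not exist: the theorem deliberately assumes \emph{no} time regularity whatsoever, so the orbits $t\mapsto S_t x$ are not assumed to be measurable and the integral is in general undefined. You could retreat to discrete Ces\`aro means of powers of $S_{t_0}$, but that gives a projection only for the subsemigroup $(S_{nt_0})_{n\in\N}$ and says nothing about $S_t$ for arbitrary $t$.

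Second, and more seriously, even if a mean ergodic decomposition $E = \fix(\mathscr{S}) \oplus \ker P$ were available, the subspace $\ker P = \overline{\lh}\{x - S_t x\}$ is a linear complement, \emph{not} a lattice ideal. It is therefore not a sublattice of $E$, and the restriction of $\mathscr{S}$ to it is not a positive semigroup on an AL-space in any useful sense; your assertion that the restriction is ``bounded positive'' is the point that fails. Consequently neither AM-compactness nor the AL-geometry is available on $\ker P$, and the vague ``peripheral spectrum / spectral radius considerations'' you appeal to have nothing to bite on. Note too that knowing $\fix(\mathscr{S}|_{\ker P})=\{0\}$ is far from enough for convergence to $0$; what is actually needed is triviality of the \emph{dual} fixed space.

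The paper replaces your linear decomposition by a \emph{lattice} one. It takes $J$ to be the closed ideal generated by the positive fixed points of $\mathscr{S}$ (Proposition~\ref{p.ex-of-positive-fixed-point} guarantees every fixed point is dominated by a positive one). Then $J$ and $E/J$ are again AL-spaces with positive induced semigroups. On $J$, convergence comes from Corollary~\ref{c.gg19}, which localises to principal ideals where a positive fixed point is quasi-interior and invokes \cite{gerlach_glueck2019}. On the quotient $E/J$, the separation hypothesis forces $\fix((\mathscr{S}^{E/J})^*)=\{0\}$, and then a direct Banach-limit construction exploiting the functional $\one$ on the AL-space (Theorem~\ref{t.conv0}) --- not spectral theory --- yields $S_t^{E/J}\to 0$ strongly. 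The two pieces are reassembled by the elementary Cauchy-net argument in Proposition~\ref{p.tsc}.
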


Here, $\fix \mathscr{S}$ denotes the \emph{fixed space} of $\mathscr{S}$, and $\mathscr{S}^*$ denote the dual semigroup of $\mathscr{S}$ that acts on the norm dual $E^*$ of $E$.
For bounded $C_0$-semigroups, the assumption that $\fix \mathscr{S}$ separates $\fix \mathscr{S^*}$ is equivalent to mean ergodicity of $\mathscr{S}$ \cite[Thm.\ V.4.5]{engel_nagel2000}; thus, Theorem \ref{t.alconvergence} can be interpreted as a Tauberian theorem, as it concludes convergence from (a generalization of) mean ergodicity.

It is worthwhile to mention that if $E$ is an $L^1$-space over a $\sigma$-finite measure space, then every so-called \emph{integral operator} satisfies the assumption of being AM-compact; see for instance \cite[Appendix~A]{gerlach_glueck2019} for details.

The proof of Theorem~\ref{t.alconvergence} at the end of this section consists of two main ingredients: (i) The observation that the theorem is true if $\mathscr{S}$ has sufficiently many fixed points in $E$ (Corollary~\ref{c.gg19}). This is a consequence of a recent result from \cite{gerlach_glueck2019} which is recalled in Theorem~\ref{t.gg19} below. (ii) The observation that the theorem is true if $\mathscr{S}^*$ has no fixed-points at all (Lemma~\ref{l.conv0}). The proof of the latter heavily uses the AL-structure of the space.

As for the first ingredient, we recall the following result from \cite[Thm.\ 3.5]{gerlach_glueck2019}. A vector $x\geq 0$ in a Banach lattice $E$ is called a \emph{quasi-interior point} of the positive cone if the principle ideal generated by $x$ is dense in $E$.

\begin{thm} \label{t.gg19}
	Let $E$ be a Banach lattice and $\mathscr{S} = (S_t)_{t\in (0,\infty)}$ be a bounded positive semigroup on $E$. Assume that the operator $S_{t_0}$ is AM-compact for some $t_0 > 0$ and that the semigroup possesses a fixed point that is a quasi-interior point of the positive cone $E_+$. Then $S_t$ converges strongly as $t\to \infty$.
\end{thm}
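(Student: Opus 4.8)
The plan is to combine a compactness argument that makes every orbit relatively compact with the Jacobs--de Leeuw--Glicksberg (JdLG) structure theory, and then to exploit two special features of the situation -- the order structure near the quasi-interior fixed point and the \emph{divisibility} of the time semigroup $(0,\infty)$ -- to force the reversible part of $\mathscr{S}$ to collapse onto its fixed space. Throughout, let $u$ denote the quasi-interior fixed point and set $C \coloneqq \sup_{t>0}\|S_t\| < \infty$.

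First I would show that every orbit is relatively compact. Fix $x$ with $0 \le x \le u$. For $t \ge t_0$ we have $0 \le S_{t-t_0}x \le S_{t-t_0}u = u$, so $S_t x = S_{t_0}(S_{t-t_0}x) \in S_{t_0}[0,u]$, and the latter set is relatively compact by AM-compactness of $S_{t_0}$. Taking differences, $\{S_t x : t \ge t_0\}$ is relatively compact for every $x$ in the principal ideal $E_u$ generated by $u$; since $u$ is quasi-interior, $E_u$ is dense in $E$, and boundedness of $\mathscr{S}$ upgrades this, via a routine total-boundedness estimate with constant $C$, to relative compactness of every orbit in $E$. I would then invoke the JdLG decomposition for the bounded, positive, relatively compact semigroup $\mathscr{S}$: there is a positive projection $Q$ with $E = E_r \oplus E_s$, where $E_r = \operatorname{ran}Q$ is a closed sublattice (the \emph{reversible part}) on which $\mathscr{S}$ acts as a relatively compact group $G$ of lattice isomorphisms, while the flight part $E_s = \ker Q$ carries orbits tending to $0$. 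No time-regularity enters here, as the theory uses only relative compactness of the operator orbits. Strong convergence $S_t \to Q$ is equivalent to $E_r = \fix(\mathscr{S})$; note that $u \in E_r$ and that positivity of $Q$ together with $Qu = u$ makes $u$ quasi-interior in $E_r$.

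The main obstacle, and the point where the hypotheses do their real work, is trivializing the action on $E_r$. Rerunning the first estimate along a net $S_{t_n} \to I|_{E_r}$ that defines the group identity shows $[0,u] \cap E_r \subseteq \overline{S_{t_0}[0,u]}$, so the order interval of $E_r$ is compact; a Banach lattice with compact order interval is atomic, so $E_r$ is an atomic (discrete) Banach lattice. The lattice isomorphisms in $G$ then permute the atoms up to positive scalars, and relative compactness of $G$ -- in particular two-sided boundedness, which keeps the rescaling factors bounded below -- rules out infinite atom-orbits, since an infinite orbit would yield a uniformly separated, hence non-precompact, set $\{gx : g \in G\}$. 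Thus every atom has finite $G$-orbit, and the induced action of $\mathscr{S}$ on each such finite orbit is a homomorphism from the \emph{divisible} semigroup $(0,\infty)$ into a finite group; because $t = m\cdot(t/m)$, every such homomorphism is trivial. This divisibility argument is exactly what replaces the usual continuity/time-regularity assumptions.

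It remains to observe that once each atom is fixed by $S_t$, the identity $S_t u = u$ pins down the rescaling scalars to be $1$, so $S_t|_{E_r} = \operatorname{id}$, i.e. $E_r = \fix(\mathscr{S})$. Putting the pieces together, $S_t = S_t Q + S_t(I-Q) \to Q + 0 = Q$ strongly as $t \to \infty$. The technical heart of the argument is extracting the atomic structure of $E_r$ and the finiteness of the atom-orbits from AM-compactness and relative compactness; the rest is the soft combination of JdLG theory, positivity, and divisibility.
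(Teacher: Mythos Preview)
The paper does not give its own proof of this theorem; it is quoted verbatim as \cite[Theorem~3.5]{gerlach_glueck2019} and only the remark is added that the result there is stated for more general (divisible) commutative semigroups. So there is no ``paper's proof'' to compare against beyond that reference.

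Your sketch is, in fact, precisely the strategy employed in \cite{gerlach_glueck2019}: (1) use the quasi-interior fixed point together with AM-compactness of $S_{t_0}$ to obtain relative norm-compactness of all orbits; (2) apply the Jacobs--de~Leeuw--Glicksberg splitting; (3) show that the reversible part $E_r$ has compact order intervals and is therefore atomic; (4) observe that the compact group of lattice isomorphisms permutes the atoms in finite orbits; (5) kill the permutation part via divisibility of $(0,\infty)$; (6) use the fixed quasi-interior point to pin the remaining scaling factors to $1$. So your outline matches the intended argument.

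Two places in your write-up deserve more care, however. First, the assertion that ``$E_r$ is a closed sublattice'' is not automatic: the JdLG projection $Q$ is a positive projection, and the range of a positive projection is a Banach lattice in its \emph{own} right, but its lattice operations need not agree with those inherited from $E$. This matters for the step where you identify $[0,u]\cap E_r$ with the order interval $[0,u]_{E_r}$. The clean way around this (as in \cite{gerlach_glueck2019}) is to work with $Q[0,u]$ and use that $Q$ lies in the strong closure of $\{S_t:t\ge t_0\}$, so $Q[0,u]\subseteq\overline{S_{t_0}[0,u]}$; one then argues directly that the intrinsic order intervals of $E_r$ are compact. Second, the step ``a Banach lattice with compact order interval is atomic'' is correct but not entirely trivial; it relies on the fact that compactness of order intervals forces order continuity of the norm and then on a Krein--Milman/extreme-point argument identifying atoms. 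These are handled in \cite{gerlach_glueck2019}, but you should be aware that they are not one-liners.
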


If $E$ does not contain quasi-interior points Theorem~\ref{t.gg19} is not directly applicable. For this reason, the following corollary is useful.

\begin{cor}\label{c.gg19}
	Let $E$ be a Banach lattice and $\mathscr{S} = (S_t)_{t\in (0,\infty)}$ be a bounded positive semigroup on $E$. Assume that $S_{t_0}$ is AM-compact for some $t_0>0$ and that the only closed ideal in $E$ that contains all positive fixed points of $\mathscr{S}$ is $E$ itself. Then $S_t$ converges strongly as $t\to \infty$.
\end{cor}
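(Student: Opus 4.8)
The plan is to localize Theorem~\ref{t.gg19} to the closed principal ideals generated by the individual positive fixed points of $\mathscr{S}$, and then to glue the resulting convergence statements together using density and the uniform bound on the semigroup.

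First I would let $J$ denote the (not necessarily closed) ideal of $E$ generated by all positive fixed points of $\mathscr{S}$. Its closure $\overline{J}$ is a closed ideal containing every positive fixed point, so the hypothesis forces $\overline{J} = E$. Since $M := \sup_{t>0}\|S_t\| < \infty$ and $E$ is complete, a routine $3\eps$-argument then reduces the claim to showing that $S_t y$ converges as $t \to \infty$ for each $y \in J$.

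Now fix such a $y$. Because a positive finite linear combination of fixed points is again a positive fixed point, the description of the generated ideal yields a positive fixed point $x$ of $\mathscr{S}$ with $|y| \le x$. I would then pass to $E_x$, the closure in $E$ of the principal ideal $\{z \in E : |z| \le nx \text{ for some } n \in \N\}$; this is a closed ideal of $E$, hence a Banach lattice in its own right, and $x$ is a quasi-interior point of $(E_x)_+$ by construction. The two routine verifications are that $\mathscr{S}$ restricts to $E_x$ — which follows from $|S_t z| \le S_t|z| \le n S_t x = nx$ together with continuity of $S_t$ — and that $S_{t_0}|_{E_x}$ is still AM-compact — which holds because order intervals of the ideal $E_x$ coincide with order intervals of $E$, and the (relatively compact) images lie in the closed subspace $E_x$. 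Applying Theorem~\ref{t.gg19} to the bounded positive semigroup $(S_t|_{E_x})_{t>0}$, which has the quasi-interior fixed point $x$, gives strong convergence of $S_t|_{E_x}$, and in particular convergence of $S_t y$. This yields the assertion.

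I do not expect a serious obstacle; the only point that needs a little care is confirming that AM-compactness survives the restriction to $E_x$ — equivalently, that one may work inside a proper closed ideal rather than needing a global quasi-interior point of $E$, which large spaces such as $\cM(\Omega)$ typically lack. If one additionally wants to record that the strong limit $P := \lim_{t\to\infty} S_t$ is a projection — not strictly required by the statement — one notes that $S_s P = \lim_{t\to\infty} S_{s+t} = P$ for every $s > 0$, whence $P^2 = \lim_{s\to\infty} S_s P = P$.
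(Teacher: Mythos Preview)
Your proof is correct and follows essentially the same approach as the paper: localize to the closed principal ideal generated by a single positive fixed point, apply Theorem~\ref{t.gg19} there, and then extend by density via the boundedness of the semigroup and a $3\eps$-argument. You supply a few more details than the paper (the explicit invariance check and the verification that AM-compactness survives restriction to a closed ideal), but the structure is identical.
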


\begin{proof}
	Let $I \subseteq E$ be the (not necessarily closed) ideal generated by all positive fixed points of $\mathscr{S}$ and fix $x\in I$; then there is a fixed point $y\geq 0$ of $\mathscr{S}$ such that $|x|\leq y$. Let $F$ be the closure of the principal ideal generated by $y$. The semigroup $\mathscr{S}$ leaves $F$ invariant and its restriction to that space has a quasi-interior fixed point, namely $y$. As, moreover, $S_{t_0}|_{F}$ is AM-compact, Theorem~\ref{t.gg19} applied to $\mathscr{S}|_F$ yields that the limit of $S_tx$ as $t\to \infty$ exists. 

	Thus, for $x\in I$ one has convergence of $S_tx$ as $t\to \infty$. But as $\mathscr{S}$ is bounded and the closure of $I$ equals $E$ by assumption, strong convergence of $S_t$ as $t\to \infty$ follows from the norm completeness of $E$ and a standard $3\eps$-argument.
\end{proof}

The next lemma is the second ingredient for the proof of Theorem~\ref{t.alconvergence}. Here, the special structure of AL-spaces is used for the first time. 

\begin{lem} \label{l.conv0}
	Let $E$ be an AL-space and $\mathscr{S}$ be a bounded, positive semigroup on $E$. If $\fix\mathscr{S}^* = \{0\}$,  then $S_tx \to 0$ as $t \to \infty$ for all $x\in E$.
\end{lem}

\begin{proof}
	Suppose that $S_t$ does not converge strongly to $0$ as $t\to \infty$. As $\mathscr{S}$ is positive and bounded, there is a vector $x\geq 0$ with $x\neq 0$ and an $\eps >0$ such that $\|S_tx\|\geq \eps$ for all $t>0$. 
	To construct a non-zero fixed point $\varphi$ of the adjoint semigroup $\mathscr{S}^*$, consider a shift-invariant positive functional $m \in (\ell^\infty(0,\infty))^*$ that maps the constant one-function to $1$. Such a functional exists by the Markov--Kakutani fixed point theorem. 
	Moreover, since $E$ is an AL-space, there exists a positive functional $\one \in E^*$ such that $\one (x) = \|x\|$ for all $x\geq 0$. 
	Define $\varphi \in E^*$ by
	\[
		\langle \varphi, y\rangle_{E^*, E} \coloneqq \Big\langle m, \big( \langle \one, S_ty\rangle_{E^*, E}\big)_{t\in (0,\infty)}\Big\rangle_{(\ell^\infty(0,\infty))^*, \ell^\infty(0,\infty)}
	\]
	for each $y \in E$. As $m$ is shift invariant, $\varphi$ is a fixed point of $\mathscr{S}^*$. Also, $\varphi \neq 0$, as
	\[
		\langle \varphi , x\rangle = \big\langle m , (\|S_tx\|)_{t\in (0,\infty)}\big\rangle \geq \langle m, \eps\rangle = \eps >0. \qedhere
	\]
\end{proof}

For $C_0$-semigroups, the assertion of Lemma~\ref{l.conv0} is well-known and can for instance be found in \cite[Thm.\ 7.7]{Chill2007}.
The proof of the following lemma is straightforward, so we omit it.

\begin{lem}
	\label{l.tsc}
	Let $E$ be a Banach space, $F$ a closed subspace of $E$ and $\mathscr{S} = (S_t)_{t\in (0,\infty)}$ a bounded semigroup
	on $E$ such that $S_tF\subseteq F$ for all $t>0$. Denote by $\mathscr{S}^F = (S_t^F)_{t\in (0,\infty)}$ the restriction of the semigroup to $F$ and by $\mathscr{S}^{E/F} = (S_t^{E/F})_{t\in (0,\infty)}$ the  quotient semigroup. Then the following are equivalent:
	\begin{enumerate}[\upshape (i)]
		\item $S_t$ is strongly convergent as $t \to \infty$ and the limit operator maps into $F$.
		
		\item $S_t^{E/F}$ converges strongly to zero and $S_t^F$ is strongly convergent as $t \to \infty$.
	\end{enumerate}
\end{lem}

One further observation is still missing for the proof of Theorem~\ref{t.alconvergence}.

\begin{lem} \label{l.ex-of-positive-fixed-point}
	Let $E$ be an AL-space and $\cS$ be a bounded, positive semigroup on $E$. For every $x \in \fix \cS$ there exists a positive vector $y \in \fix \cS$ such that $|x| \leq y$.
\end{lem}
\begin{proof}
	One has $S_t|x|\geq |S_tx| = |x|$ for all $t>0$ and thus $S_{t+s}|x| \geq S_s|x|$ for all $t,s>0$. 
	So the norm bounded net
	$(S_t|x|)_{t\geq 0}$ is increasing and hence a Cauchy net as $E$ is an AL-space.
	Hence, $(S_t|x|)_{t\geq 0}$ converges to some $y\in \fix \cS$, and $y \ge |x|$.
\end{proof}

\begin{proof}[Proof of Theorem~\ref{t.alconvergence}]
	Let $J$ be the closed ideal generated by $\fix \mathscr{S}$. By Lemma~\ref{l.ex-of-positive-fixed-point}, $J$ is generated by $\fix \mathscr{S} \cap E_+$. An application of Corollary~\ref{c.gg19} to the restriction $\mathscr{S}^J$ of $\mathscr{S}$ to $J$ yields the strong convergence of this semigroup.

	We claim that the dual of the quotient semigroup $\mathscr{S}^{E/J}$ has trivial fixed space. To see this, let 
$q: E \to E/J$ be the quotient map and $\varphi \in (E/J)^*$ be a fixed point of $\big(\mathscr{S}^{E/J}\big)^*$. Then
$\varphi \circ q$ is a fixed point of $\mathscr{S}^*$ that vanishes on $\fix\mathscr{S}\subseteq J$. By assumption,
$\varphi\circ q =0$. But $q$ is surjective, so $\varphi =0$.

	So Lemma~\ref{l.conv0} yields strong convergence to $0$ of $\mathscr{S}^{E/J}$. By Lemma~\ref{l.tsc}, $\mathscr{S}$ converges strongly as claimed.
\end{proof}

\section{Kernels and kernel operators} \label{s.kernels}

In this section, we consider semigroups of so-called \emph{kernel operators} on a space of measurable functions, and their dual semigroups on the space of measures. The relation of weak convergence of the two semigroups is discussed and a description of the limit operator is given. This forms a very useful frame of reference for what follows in the subsequent sections.

Let $(\Omega, \Sigma)$ be a measurable space and let $\mathscr{M}(\Omega)$ and $B_b(\Omega)$ denote, respectively, the space of signed (finite) measures on $\Omega$ and the space of bounded real-valued measurable functions on $\Omega$. As usual, $\mathscr{M}(\Omega)$ is endowed with the total variation norm and $B_b(\Omega)$ with the supremum norm, which renders both spaces Banach lattices.

A \emph{bounded kernel} on $\Omega$ is a mapping $k: \Omega\times \Sigma \to \R$ such that
\begin{enumerate}[(a)]
	\item for every $A\in \Sigma$ the map $x\mapsto k(x,A)$ is measurable,
	\item for every $x \in \Omega$ the map $A\mapsto k(x, A)$ is a (signed) measure,
	\item $\sup_{x\in \Omega} |k|(x, \Omega) < \infty$, where $|k|(x,\cdot)$ denotes the total variation of the measure $k(x,\cdot)$.
\end{enumerate}

To each bounded kernel $k$ there corresponds a bounded linear operator $S: \mathscr{M}(\Omega) \to \mathscr{M}(\Omega)$, defined via
\begin{align}
	\label{eq:kernel-operator-on-measures}
	(S\mu)(A) = \int_\Omega k(x,A) \dx\mu(x) \quad \text{for }  A \in \Sigma,
\end{align}
for all $\mu \in \cM(\Omega)$ and a bounded linear operator $T: B_b(\Omega) \to B_b(\Omega)$, defined by
\begin{align}
	\label{eq:kernel-operator-on-functions}
	(Tf)(x) = \int_\Omega f(y) \, k(x,\dx y) \quad \text{for } x \in \Omega
\end{align}
for all $f\in B_b(\Omega)$. It is not difficult to see that $\|S\| = \|T\| = \sup_{x\in \Omega} |k|(x, \Omega)$, and that the operators $S$ and $T$ are in duality in the sense that $\langle S \mu, f\rangle = \langle \mu, Tf \rangle$ for each measure $\mu \in \mathscr{M}(\Omega)$ and each function $f \in B_b(\Omega)$. Here, the notation $\langle \mu, f\rangle := \int_\Omega f(x) \dx \mu(x)$ is used.

Now consider the situation the other way round. First, let $S: \mathscr{M}(\Omega) \to \mathscr{M}(\Omega)$ be a bounded linear operator. It follows from \cite[Prop.\ 3.1 and~3.5]{kunze2011} that the following properties are equivalent:
\begin{enumerate}[(i)]
	\item There exists a bounded kernel $k$ on $\Omega$ such that $S$ is given by the integral formula~\eqref{eq:kernel-operator-on-measures}.
		
	\item The norm adjoint $S^*$ leaves $B_b(\Omega)$ invariant.
	
	\item The operator $S$ is continuous with respect to the $\sigma(\mathscr{M}(\Omega), B_b(\Omega))$-topology.
\end{enumerate}

If $S$ satisfies these equivalent conditions, then $S$ is called a \emph{kernel operator} with associated kernel $k$. In this case, the kernel $k$ is uniquely determined and the kernel operator $S$ is positive (in which case it is called a \emph{positive kernel operator}) if and only if $k$ is positive in the sense that $k(x,A) \ge 0$ for all $x \in \Omega$ and all measurable $A \subseteq \Omega$. If $S$ is a kernel operator with associated kernel $k$, then the restriction of its adjoint to $B_b(\Omega)$ is denoted by $S'$. Note that $S'$ is given by the integral formula~\eqref{eq:kernel-operator-on-functions}.

Analogously, one can start with a bounded linear operator $T: B_b(\Omega) \to B_b(\Omega)$. Again, it follows from \cite[Prop.\ 3.1 and~3.5]{kunze2011} that the following properties are equivalent:
\begin{enumerate}[(i)]
	\item There exists a bounded kernel $k$ on $\Omega$ such that $T$ is given by the integral formula~\eqref{eq:kernel-operator-on-functions}.
		
	\item The norm adjoint $T^*$ leaves $\mathscr{M}(\Omega)$ invariant.
	
	\item The operator $T$ is continuous with respect to the $\sigma(B_b(\Omega), \mathscr{M}(\Omega))$-topology.
\end{enumerate}

If $T$ satisfies these equivalent conditions, then it is called $T$ a \emph{kernel operator} with kernel $k$; as above, the kernel $k$ is uniquely determined by $T$ and positivity of $T$ is equivalent to positivity of $k$. The restriction of $T^*$ to $\mathscr{M}(\Omega)$ is denoted by $T'$, and it is given by the integral formula~\eqref{eq:kernel-operator-on-measures}.

In other words, if $T$ is a kernel operator on $B_b(\Omega)$ with associated kernel $k$, then $T'$ is a kernel operator on $\mathscr{M}(\Omega)$ with the same kernel; and if $S$ is a kernel operator on $\mathscr{M}(\Omega)$ with associated kernel $k$, then $S'$ is a kernel operator on $B_b(\Omega)$ with the same kernel.

Convergence of kernel operators is related to pointwise convergence of the associated kernels in the sense of the following proposition, which can easily be derived from the Vitali--Hahn--Saks theorem \cite[Thm.\ 4.6.3]{bogachev2007}.

\begin{prop} 
	\label{p.convergence-of-kernel-ops}
	Let $(\Omega, \Sigma)$ be a measurable space and let $(k_n)$ be a sequence of bounded kernels on $\Omega$ which is bounded in the sense that
	\begin{align*}
		\sup_{n \in \N} \sup_{x \in \Omega} |k_n|(x,\Omega) < \infty.
	\end{align*}
	Denote the corresponding kernel operators on $\mathscr{M}(\Omega)$ and $B_b(\Omega)$ by $(S_n)$ and $(T_n)$, respectively. The following are equivalent:
	\begin{enumerate}[\upshape (i)]
		\item For each $x \in \Omega$ and each $A \in \Sigma$, the sequence $(k_n(x,A))$ converges to a real number $k(x,A)$.
		
		\item For each $\mu \in \cM(\Omega)$ the sequence $(S_n\mu)$ is $\sigma(\cM(\Omega), B_b(\Omega))$-convergent to a measure $S\mu \in \cM(\Omega)$.
		
		\item For each $f \in B_b(\Omega)$ the sequence $(T_nf)$ is $\sigma(B_b(\Omega), \cM(\Omega))$-convergent to a function $Tf \in B_b(\Omega)$.
	\end{enumerate}
	If these equivalent assertions are satisfied, then $k$ is a bounded kernel on $\Omega$, $S$ is a kernel operator on $\cM(\Omega)$ with kernel $k$, and $T$ is a kernel operator on $B_b(\Omega)$ with kernel $k$.
\end{prop}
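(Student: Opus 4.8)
The plan is to prove the cycle of implications (ii)$\,\Rightarrow\,$(i), (iii)$\,\Rightarrow\,$(i) and (i)$\,\Rightarrow\,$(ii)~\&~(iii), noting that the last implication will also yield the supplementary assertions about $k$, $S$ and $T$. Write $M \coloneqq \sup_{n}\sup_{x\in\Omega}|k_n|(x,\Omega)$, so that $\|S_n\| = \|T_n\| \le M$ for all $n$. Throughout I will freely use the identities $(T_n\one_A)(x) = k_n(x,A) = (S_n\delta_x)(A)$ coming from \eqref{eq:kernel-operator-on-measures} and \eqref{eq:kernel-operator-on-functions}, the duality $\langle S_n\mu, f\rangle = \langle \mu, T_nf\rangle$ from Section~\ref{s.kernels}, and the facts that $\delta_x \in \cM(\Omega)$ and $\one_A \in B_b(\Omega)$.

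The two directions towards (i) are immediate. Assuming (ii), for each $x\in\Omega$ and $A\in\Sigma$ we test the $\sigma(\cM(\Omega),B_b(\Omega))$-convergence of $(S_n\delta_x)$ against $\one_A$ and obtain $k_n(x,A) = (S_n\delta_x)(A) \to (S\delta_x)(A)$. Assuming (iii), we test the $\sigma(B_b(\Omega),\cM(\Omega))$-convergence of $(T_n\one_A)$ against $\delta_x$ and obtain $k_n(x,A) = (T_n\one_A)(x) \to (T\one_A)(x)$. In both cases (i) holds.

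For the substantial implication, assume (i) and set $k(x,A) \coloneqq \lim_n k_n(x,A)$. Then $x\mapsto k(x,A)$ is measurable as a pointwise limit of measurable functions, and $\sum_{A\in\pi}|k(x,A)| = \lim_n\sum_{A\in\pi}|k_n(x,A)| \le M$ for every finite measurable partition $\pi$ of $\Omega$, which will force $|k|(x,\Omega)\le M$ once $k(x,\cdot)$ is known to be a measure. Establishing this countable additivity is the heart of the proof and the step I expect to be the main obstacle: for each fixed $x$ the set functions $k_n(x,\cdot)$ are finite signed measures of uniformly bounded variation that converge setwise, so by Nikod\'ym's convergence theorem their limit $k(x,\cdot)$ is again a countably additive measure. (Alternatively, for fixed $x$ one dominates all $k_n(x,\cdot)$ by the finite measure $\sum_n 2^{-n}|k_n|(x,\cdot)$ and applies the Vitali--Hahn--Saks theorem to deduce uniform countable additivity of $(k_n(x,\cdot))_n$, whence countable additivity of the limit.) Thus $k$ is a bounded kernel; let $S$ and $T$ be the kernel operators on $\cM(\Omega)$ and $B_b(\Omega)$ associated with $k$.

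It remains to prove the convergence statements. For $A\in\Sigma$ we have $(T_n\one_A)(x) = k_n(x,A) \to k(x,A) = (T\one_A)(x)$ for every $x$, with $|(T_n\one_A)(x)|\le M$, so dominated convergence gives $\langle T_n\one_A,\mu\rangle \to \langle T\one_A,\mu\rangle$ for every $\mu\in\cM(\Omega)$; by linearity this extends to all simple functions, and then a $3\eps$-argument based on $\|T_n(f-g)\|_\infty \le M\|f-g\|_\infty$ together with the sup-norm density of simple functions in $B_b(\Omega)$ extends it to every $f\in B_b(\Omega)$, which is (iii). Finally $\langle S_n\mu, f\rangle = \langle \mu, T_nf\rangle \to \langle \mu, Tf\rangle = \langle S\mu, f\rangle$ for all $\mu$ and $f$ gives (ii), and since the relevant weak topologies are Hausdorff, uniqueness of the limits identifies the operators $S$ and $T$ appearing in (ii) and (iii) with the kernel operators attached to $k$, establishing the supplementary claims. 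Apart from the appeal to Nikod\'ym's theorem, all of this is routine dominated-convergence and $3\eps$ bookkeeping.
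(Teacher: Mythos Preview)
Your proof is correct and follows essentially the same approach as the paper: the converse directions by testing against Dirac measures and indicator functions, the Vitali--Hahn--Saks/Nikod\'ym theorem for countable additivity of the limit kernel, and dominated convergence plus density of simple functions for the forward direction. The only cosmetic difference is that the paper establishes the convergence $\langle S_n\mu,\one_A\rangle \to \langle S\mu,\one_A\rangle$ directly (which by duality is the same computation as your $\langle T_n\one_A,\mu\rangle \to \langle T\one_A,\mu\rangle$) and cites Vitali--Hahn--Saks rather than Nikod\'ym.
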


Note that the boundedness condition on the sequence $(k_n)$ is equivalent to either of the sequences $(S_n)$ or $(T_n)$ being norm bounded.

It is worthwhile to explicitly formulate the equivalence of assertions~(ii) and~(iii) above for the special case of semigroups. In this case, one also obtains additional information about the structure of the limit operator, similarly to mean ergodic semigroups, see \cite{gerlach_kunze2014}.
We use the abbreviations $\ran(\id - \cS)$ and $\ran(\id - \cT)$ for the spaces
\begin{align*}
	\lh \biggl(\bigcup_{t \in (0,\infty)} (\id - S_t)\cM(\Omega)\biggr) \qquad \text{and} \qquad \lh \biggl(\bigcup_{t \in (0,\infty)} (\id - T_t)B_b(\Omega)\biggr),
\end{align*}
respectively.

\begin{prop} 
	\label{p.weak-convergence-of-sg}
	Let $(\Omega, \Sigma)$ be a measurable space, let $\cT = (T_t)_{t \in (0,\infty)}$ be a bounded semigroup of kernel operators on $B_b(\Omega)$ and  denote the dual semigroup on $\cM(\Omega)$ by $\cS = (S_t)_{t \in (0,\infty)} := (T_t')_{t \in (0,\infty)}$. The following are equivalent:
	
	\begin{enumerate}[\upshape (i)]
		\item 
		For each $\mu \in \cM(\Omega)$, $S_t\mu$ is $\sigma(\cM(\Omega), B_b(\Omega))$-convergent to a measure $P\mu \in \cM(\Omega)$ as $t \to \infty$.
		
		\item 
		For each $f \in B_b(\Omega)$, $T_tf$ is $\sigma(B_b(\Omega), \cM(\Omega))$-convergent to a function $Qf \in B_b(\Omega)$ as $t \to \infty$.
	\end{enumerate}
	
	In this case $P$ is a kernel operator on $\cM(\Omega)$ that commutes with each operator $S_t$, and $Q$ is a kernel operator on $B_b(\Omega)$ that commutes with each operator $T_t$. Moreover, $P$ and $Q$ are in duality, and they have the following properties:
	
	\begin{enumerate}[\upshape (a)]
		\item The space $\cM(\Omega)$ splits as $\cM(\Omega) = \fix \cS \oplus \overline{\ran(\id - \cS)}$ (where the closure is taken with respect to $\sigma(\cM(\Omega), B_b(\Omega))$) and $P$ is the projection onto $\fix \cS$ along this splitting.
		
		\item The space $B_b(\Omega)$ splits as $B_b(\Omega) = \fix \cT \oplus \overline{\ran(\id - \cT)}$ (where the closure is taken with respect to $\sigma(B_b(\Omega), \cM(\Omega))$) and $Q$ is the projection onto $\fix \cT$ along this splitting.
	\end{enumerate}
	
	Finally, the spaces $\fix \cT$ and $\fix \cS$ have the same dimension.
\end{prop}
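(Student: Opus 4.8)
The plan is to prove the dimension equality by exhibiting a non-degenerate bilinear pairing between $\fix\cS$ and $\fix\cT$, using the limit operators $P$ and $Q$ constructed earlier in the proposition. Concretely, I would restrict the canonical duality $\langle\mu,f\rangle = \int_\Omega f\dx\mu$ to the subspaces $\fix\cS\subseteq\cM(\Omega)$ and $\fix\cT\subseteq B_b(\Omega)$ and show that
\[
\fix\cS\times\fix\cT\to\R,\qquad (\mu,f)\mapsto\langle\mu,f\rangle,
\]
is non-degenerate in both arguments. Once this is done, the two spaces embed linearly into the algebraic duals of one another, which forces them to be simultaneously finite-dimensional and, in the finite-dimensional case, of equal dimension; in particular they have the same dimension.

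For the non-degeneracy I would use only the facts -- all recorded in the statement of the proposition -- that $P$ and $Q$ are mutually dual (that is, $\langle P\mu,f\rangle = \langle\mu,Qf\rangle$ for all $\mu\in\cM(\Omega)$ and $f\in B_b(\Omega)$) and that they are projections onto $\fix\cS$ and $\fix\cT$, respectively. First, take $f\in\fix\cT$ with $f\neq 0$. Since $\cM(\Omega)$ separates the points of $B_b(\Omega)$ (test against Dirac measures), there is some $\nu\in\cM(\Omega)$ with $\langle\nu,f\rangle\neq 0$; then $P\nu\in\fix\cS$, and because $Qf=f$ we obtain
\[
\langle P\nu,f\rangle = \langle\nu,Qf\rangle = \langle\nu,f\rangle\neq 0 .
\]
Hence no non-zero element of $\fix\cT$ is annihilated by all of $\fix\cS$. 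Symmetrically, take $\mu\in\fix\cS$ with $\mu\neq 0$; since $B_b(\Omega)$ separates the points of $\cM(\Omega)$ (test against indicator functions of measurable sets), there is $g\in B_b(\Omega)$ with $\langle\mu,g\rangle\neq 0$, and then $Qg\in\fix\cT$ satisfies
\[
\langle\mu,Qg\rangle = \langle P\mu,g\rangle = \langle\mu,g\rangle\neq 0
\]
because $P\mu=\mu$. This establishes the non-degeneracy, and with it the assertion.

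I do not expect a genuine obstacle here: once the earlier parts of the proposition are available (existence of the limit operators and the fact that they are mutually dual projections onto the respective fixed spaces), the argument is a two-line computation. The only point that takes a moment's thought is to realise that non-degeneracy of the pairing is exactly the right statement to aim for and that it is automatic -- a non-zero fixed measure of $\cS$ cannot be orthogonal to every fixed function of $\cT$, since applying $Q$ to any test function against which the measure integrates non-trivially produces such a fixed function while leaving the value of the integral unchanged, and dually.
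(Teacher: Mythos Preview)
Your argument is correct and is exactly the natural way to establish the dimension equality. In fact, the paper's proof of the proposition stops after verifying the splittings in~(a) and~(b) and does not spell out an argument for the final ``Moreover'' sentence; your computation fills this in. The identity $\langle P\nu,f\rangle = \langle\nu,Qf\rangle = \langle\nu,f\rangle$ that you use is precisely the one the authors employ later (in the proof of the implication ``(i)~$\Rightarrow$~(iii)'' of Theorem~\ref{t.main-bdd-functions}) to show that $\fix\cS$ separates $\fix\cT$, so your approach is entirely in the spirit of the paper.

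One minor remark on your concluding step: a non-degenerate pairing between $V$ and $W$ yields injections $V\hookrightarrow W^{*}$ and $W\hookrightarrow V^{*}$, which shows that $V$ is finite-dimensional if and only if $W$ is, and that in this case $\dim V=\dim W$. In the infinite-dimensional case it only tells you that both dimensions are infinite, not that they agree as cardinals; but ``same dimension'' in this context is understood in $\N\cup\{\infty\}$, so the conclusion stands as intended.
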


This is a straightforward consequence of Proposition~\ref{p.convergence-of-kernel-ops}, so we omit the details of the proof.

\section{Convergence in total variation norm and a Tauberian theorem} \label{s.norm-conv-and-tauber}

The main point of Proposition~\ref{p.weak-convergence-of-sg} was to relate, for a semigroup of kernel operators, weak convergence on the space of measurable functions to weak convergence on the space of measures. For practical purposes, two additional properties are desirable: (i) on the space of measures, one is not only interested in weak convergence, but rather in convergence in total variation norm; (ii) it is important to have sufficient -- or equivalent -- conditions for convergence that are easy to check in concrete situations.

As it turns out, both requirements are met by the following theorem. Recall that a $\sigma$-algebra $\Sigma$  is called \emph{countably generated} if there exists a countable family of sets $\mathscr{A} \subseteq \Sigma$ such that $\Sigma$ is the smallest $\sigma$-algebra that contains $\mathscr{A}$. For instance, the Borel $\sigma$-algebra on a Polish space is countably generated.

\begin{thm} \label{t.main-bdd-functions}
	Let $(\Omega, \Sigma)$ be a measurable space whose $\sigma$-algebra is countably generated. Let $\cT = (T_t)_{t \in (0,\infty)}$ be a bounded semigroup of positive kernel operators on $B_b(\Omega)$ with associated kernels $k_t$, and let $\cS = (S_t)_{t \in (0,\infty)} := (T_t')_{t \in (0,\infty)}$ denote the dual semigroup on $\cM(\Omega)$. 
	
	Suppose that there exists a measure $0 \le \mu \in \cM(\Omega)$ and a time $t_0 \in (0,\infty)$ such that, for each $x \in \Omega$, the measure $k_{t_0}(x,\argument)$ is absolutely continuous with respect to $\mu$. Then the following assertions are equivalent:
	\begin{enumerate}[\upshape (i)]
		\item For each $f \in B_b(\Omega)$, $T_tf$ converges with respect to $\sigma(B_b(\Omega),\cM(\Omega))$ to a function in $B_b(\Omega)$ as $t \to \infty$.
		
		\item For each $\nu \in \cM(\Omega)$, $S_t\nu$ converges in total variation norm to a measure in $\cM(\Omega)$ as $t \to \infty$.
		
		\item The space $\fix \cS$ separates $\fix \cT$.
	\end{enumerate}
\end{thm}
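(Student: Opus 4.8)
The plan is to establish the cycle $\mathrm{(ii)} \Rightarrow \mathrm{(i)} \Rightarrow \mathrm{(iii)} \Rightarrow \mathrm{(ii)}$; only the last implication will use the absolute continuity hypothesis, the first two being formal consequences of Proposition~\ref{p.weak-convergence-of-sg}. Indeed, convergence in total variation norm implies convergence with respect to $\sigma(\cM(\Omega),B_b(\Omega))$, so $\mathrm{(ii)}$ gives assertion~(i) of Proposition~\ref{p.weak-convergence-of-sg}, hence its assertion~(ii), which is $\mathrm{(i)}$ above. Conversely, if $\mathrm{(i)}$ holds, Proposition~\ref{p.weak-convergence-of-sg} furnishes mutually dual projections $Q$ onto $\fix\cT$ and $P$ onto $\fix\cS$; given $0 \neq f \in \fix\cT$, choose $x \in \Omega$ with $f(x) \neq 0$, and then $\langle P\delta_x, f\rangle = \langle \delta_x, Qf\rangle = f(x) \neq 0$ with $P\delta_x \in \fix\cS$, so $\fix\cS$ separates $\fix\cT$, i.e.\ $\mathrm{(iii)}$.

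For the implication $\mathrm{(iii)} \Rightarrow \mathrm{(ii)}$ I would apply Theorem~\ref{t.alconvergence} to the bounded positive semigroup $\cS$ on the AL-space $\cM(\Omega)$, for which two things must be verified. First, I claim that the absolute continuity assumption forces $\fix\cS^* = \fix\cT$, with $B_b(\Omega)$ regarded as a subspace of $\cM(\Omega)^*$. The inclusion $\fix\cT \subseteq \fix\cS^*$ is clear since $S_t^*|_{B_b(\Omega)} = T_t$. For the reverse inclusion, note that $k_{t_0}(x,\argument) \ll \mu$ for every $x$ implies $S_{t_0}\nu \ll \mu$ for every $\nu \in \cM(\Omega)$, so the range of $S_{t_0}$ lies in the closed subspace $L^1(\mu) \subseteq \cM(\Omega)$. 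If $\Phi \in \fix\cS^*$, then $\langle \Phi, \nu\rangle = \langle S_{t_0}^*\Phi, \nu\rangle = \langle \Phi, S_{t_0}\nu\rangle$ for all $\nu$; writing the functional $\Phi|_{L^1(\mu)} \in L^1(\mu)^* = L^\infty(\mu)$ as integration against a bounded measurable function $g$ and using the duality $\langle S_{t_0}\nu, g\rangle = \langle \nu, T_{t_0}g\rangle$, we obtain $\langle \Phi, \nu\rangle = \langle T_{t_0}g, \nu\rangle$ for all $\nu$, whence $\Phi = T_{t_0}g \in B_b(\Omega)$ and therefore $\Phi \in \fix\cT$. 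Thus hypothesis~$\mathrm{(iii)}$ states exactly that $\fix\cS$ separates $\fix\cS^*$.

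Second -- and this is the main technical point, where countable generation of $\Sigma$ enters -- I must check that $S_{t_0}$ is AM-compact on $\cM(\Omega)$. Fixing $0 \le \nu \in \cM(\Omega)$, every measure in $S_{t_0}[0,\nu]$ is $\mu$-absolutely continuous and dominated by $S_{t_0}\nu$, so it suffices to prove relative compactness in $L^1(\mu)$. Write $\Sigma$ as the increasing union of finite sub-$\sigma$-algebras $\Sigma_n$, let $\mathbb{E}_n$ denote conditional expectation with respect to $\mu$ onto $\Sigma_n$, and set $d_n(x) := \| k_{t_0}(x,\argument) - \mathbb{E}_n k_{t_0}(x,\argument)\|$, the total variation distance between $k_{t_0}(x,\argument)$ and the $\Sigma_n$-average of its $\mu$-density. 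Then $\mathbb{E}_n S_{t_0}[0,\nu]$ is a bounded subset of the finite-dimensional span of the atoms of $\Sigma_n$, hence relatively compact, while a triangle inequality for the $\cM(\Omega)$-valued integral $\sigma \mapsto S_{t_0}\sigma = \int_\Omega k_{t_0}(x,\argument)\,d\sigma(x)$ yields $\|S_{t_0}\sigma - \mathbb{E}_n S_{t_0}\sigma\| \le \int_\Omega d_n(x)\,d\sigma(x) \le \int_\Omega d_n(x)\,d\nu(x)$ for all $\sigma \in [0,\nu]$. Since $d_n(x) \to 0$ for each $x$ by the martingale convergence theorem and $0 \le d_n \le 2\sup_{x} k_{t_0}(x,\Omega)$, dominated convergence gives $\int_\Omega d_n\,d\nu \to 0$; thus $S_{t_0}[0,\nu]$ is totally bounded. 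This essentially reproduces, in the present setting, the argument behind the fact -- recalled after Theorem~\ref{t.alconvergence} -- that integral operators on $L^1$-spaces are AM-compact.

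Having checked both hypotheses, Theorem~\ref{t.alconvergence} shows that $S_t$ converges strongly -- that is, in total variation norm -- as $t \to \infty$, which is $\mathrm{(ii)}$, and the cycle is complete. The expected main obstacle is the AM-compactness step: because $\cM(\Omega)$ is not an $L^1$-space and its order intervals contain measures singular with respect to $\mu$, relative compactness cannot be taken off the shelf, and one must exploit both that the range of $S_{t_0}$ already lies inside $L^1(\mu)$ and the countable generation of $\Sigma$. The fixed-space identity $\fix\cS^* = \fix\cT$ is the conceptual pivot: it is precisely the absolute continuity assumption that collapses the a priori large dual fixed space onto $\fix\cT$, converting the easily verifiable condition~$\mathrm{(iii)}$ into the hypothesis required by Theorem~\ref{t.alconvergence}.
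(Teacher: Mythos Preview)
Your proof is correct and follows essentially the same route as the paper: the cycle $\mathrm{(ii)}\Rightarrow\mathrm{(i)}\Rightarrow\mathrm{(iii)}\Rightarrow\mathrm{(ii)}$, with the first two implications via Proposition~\ref{p.weak-convergence-of-sg} and the last via Theorem~\ref{t.alconvergence} after identifying $\fix\cS^*=\fix\cT$ by means of the absolute continuity assumption.

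The only substantive difference is in how you verify AM-compactness. The paper packages this into Lemma~\ref{l.amcompact}: it uses countable generation to obtain a \emph{jointly} measurable density $h(x,y)$ for $k_{t_0}(x,\argument)$ with respect to $\mu$ (citing \cite{revuz1984}), thereby identifying $S_{t_0}|_{\{\mu\}^{\perp\perp}}$ as an integral operator on $L^1(\mu)$, and then concludes that $S_{2t_0}$ is AM-compact on $\cM(\Omega)$. Your martingale-approximation argument is a direct, self-contained substitute that in fact shows $S_{t_0}$ itself is AM-compact, avoiding the squaring step; as you note yourself, this is morally the same proof that integral operators are AM-compact.

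One point to tighten: you use that $d_n(x)=\|k_{t_0}(x,\argument)-\mathbb{E}_n k_{t_0}(x,\argument)\|$ is measurable in $x$, which is not automatic from the kernel axioms. This is exactly where the paper invokes joint measurability of the density. In your framework the quickest fix is Pettis' theorem: $x\mapsto h_x\in L^1(\mu)$ is weakly measurable because $\int h_x g\,d\mu=(T_{t_0}g)(x)$ for $g\in L^\infty(\mu)\subseteq B_b(\Omega)$, and $L^1(\mu)$ is separable since $\Sigma$ is countably generated; hence $x\mapsto h_x$ is strongly measurable and $d_n$ is measurable. Also, ``$\Sigma$ is the increasing union of the $\Sigma_n$'' should read ``$\Sigma$ is generated by $\bigcup_n\Sigma_n$'', which is what martingale convergence actually needs.
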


Similar to Theorem \ref{t.alconvergence}, Theorem \ref{t.main-bdd-functions} can be considered as a Tauberian theorem; indeed, in a slightly different context it was proved in \cite[Thm.\ 5.7]{gerlach_kunze2014}, that condition (iii) is equivalent with (weak) mean ergodicity. 
It is worthwhile to point out that, under the assumptions of the theorem, $\fix \cT$ always separates $\fix \cS$; see Remark~\ref{r.converse-separation} below. 

Since $\cM(\Omega)$ is an AL-space the results of Section~\ref{s.stability-on-al-spaces} can be used to prove Theorem~\ref{t.main-bdd-functions}. The following auxiliary result is helpful.

\begin{lem}\label{l.amcompact}
	Let $(\Omega, \Sigma)$ be a measurable space, let $S$ be a positive kernel operator on $\mathscr{M}(\Omega)$ and denote the associated kernel by $k$. Assume that there exists a finite positive measure $0 \le \mu \in \mathscr{M}(\Omega)$ such that, for every $x\in \Omega$, the measure $k(x,\cdot)$ is absolutely continuous with respect to $\mu$. Then the following assertions hold:
	\begin{enumerate}
		\item[\upshape (a)] The range of $S$ is contained in the band $\{\mu\}^{\perp\perp}$ generated by $\mu$ in $\mathscr{M}(\Omega)$.
		
		\item[\upshape (b)] The norm adjoint $S^*$ of $S$ maps the norm dual $\cM(\Omega)^*$ into $B_b(\Omega)$.
	\end{enumerate}
	Now assume in addition that the $\sigma$-algebra $\Sigma$ is countably generated. Then:
	\begin{enumerate}
		\item[\upshape (c)] The restriction $S|_{\{\mu\}^{\perp\perp}}$ is an AM-compact operator on $\{\mu\}^{\perp\perp}$.
		
		\item[\upshape (d)] The operator $S^2$ is AM-compact on $\mathscr{M}(\Omega)$.
	\end{enumerate}
\end{lem}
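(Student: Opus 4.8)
The plan is to obtain (a) and (b) directly from the absolute continuity hypothesis, to derive (c) by realising the restriction of $S$ to the band $\{\mu\}^{\perp\perp}$ as a genuine integral operator on $L^1(\mu)$ — which is the only place where countable generation of $\Sigma$ is needed — and finally to deduce (d) cheaply from (a), (c) and positivity.

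For (a) I would first note that if $\mu(A)=0$ then $k(x,A)=0$ for every $x$ by absolute continuity, so $(S\nu)(A)=\int_\Omega k(x,A)\dx\nu(x)=0$ for every $0\le\nu\in\cM(\Omega)$; splitting a signed measure into positive and negative parts, it follows that $S\nu\ll\mu$ for all $\nu\in\cM(\Omega)$. Since the band generated by a positive measure $\mu$ in $\cM(\Omega)$ is exactly $\{\nu:\nu\ll\mu\}$ — its disjoint complement consists of the measures singular to $\mu$, and the Lebesgue decomposition theorem identifies the bidisjoint complement with the absolutely continuous measures — this is precisely (a), and in particular $S$ maps $\{\mu\}^{\perp\perp}$ into itself. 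For (b) I would identify $\{\mu\}^{\perp\perp}$ with $L^1(\mu)$ via Radon--Nikodym and use $L^1(\mu)^*=L^\infty(\mu)$ (valid since $\mu$ is finite). Given $\varphi\in\cM(\Omega)^*$, its restriction to $\{\mu\}^{\perp\perp}$ is represented by a bounded measurable function $\psi$, and then the function $g(x)\coloneqq\int_\Omega\psi\dx k(x,\argument)$ is well defined (independent of the representative of $\psi$ because $k(x,\argument)\ll\mu$), measurable (clear for $\psi=\one_A$, where $g(x)=k(x,A)$, then a monotone class argument), and bounded by $\|\psi\|_\infty\sup_x|k|(x,\Omega)$; a second monotone class argument shows $\langle S^*\varphi,\nu\rangle=\langle\varphi,S\nu\rangle=\int_\Omega\psi\dx(S\nu)=\int_\Omega g\dx\nu$, i.e. $S^*\varphi=g\in B_b(\Omega)$.

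For (c) I would use the countable generation to build a \emph{jointly} measurable Radon--Nikodym density. Write $\Sigma=\sigma(\{A_n:n\in\N\})$, let $\mathscr{F}_n$ be the finite algebra generated by $A_1,\dots,A_n$, and set $\kappa_n(x,y)\coloneqq\sum_B\frac{k(x,B)}{\mu(B)}\one_B(y)$, where $B$ runs over the $\mu$-nonnull atoms of $\mathscr{F}_n$; each $\kappa_n\ge 0$ is jointly measurable, and for fixed $x$ it is the martingale of conditional expectations of $\dx k(x,\argument)/\dx\mu$ along $(\mathscr{F}_n)_n$, hence converges $\mu$-almost everywhere, so that $\kappa\coloneqq\limsup_n\kappa_n$ is a jointly measurable density with $k(x,A)=\int_A\kappa(x,\argument)\dx\mu$ for all $x,A$ and $\int_\Omega\kappa(x,\argument)\dx\mu=k(x,\Omega)\le M\coloneqq\sup_x|k|(x,\Omega)$. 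Tonelli then gives $\int_\Omega\int_\Omega\kappa(x,y)|h(x)|\dx\mu(x)\dx\mu(y)\le M\|h\|_{L^1(\mu)}<\infty$ for $h\in L^1(\mu)$, so $\kappa$ is an admissible integral kernel, and transporting $S|_{\{\mu\}^{\perp\perp}}$ to $L^1(\mu)$ and applying Fubini shows it is, under this identification, the integral operator $h\mapsto\bigl(y\mapsto\int_\Omega\kappa(x,y)h(x)\dx\mu(x)\bigr)$ on $L^1(\mu)$. As $\mu$ is finite, hence $\sigma$-finite, every integral operator on $L^1(\mu)$ is AM-compact (see \cite[Appendix~A]{gerlach_glueck2019}), which yields (c).

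Finally, for (d): given $0\le\sigma\in\cM(\Omega)$, positivity of $S$ gives $|S\nu|\le S|\nu|\le S\sigma$ whenever $|\nu|\le\sigma$, so $S([-\sigma,\sigma])\subseteq[-S\sigma,S\sigma]$; by (a), $S\sigma\in\{\mu\}^{\perp\perp}$, so $[-S\sigma,S\sigma]$ is an order interval of $\{\mu\}^{\perp\perp}$, and applying (c) shows that $S([-S\sigma,S\sigma])=S|_{\{\mu\}^{\perp\perp}}([-S\sigma,S\sigma])$ is relatively compact; hence so is $S^2([-\sigma,\sigma])\subseteq S([-S\sigma,S\sigma])$, proving (d). The main obstacle is clearly step (c), and more precisely the construction of the jointly measurable density $\kappa$: this is the only point where the hypothesis that $\Sigma$ be countably generated genuinely enters (it is what makes the martingale approximation available), after which (c) reduces to a citation and (d) is routine bookkeeping.
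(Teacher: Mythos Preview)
Your proof is correct and follows essentially the same route as the paper's: identify $\{\mu\}^{\perp\perp}$ with $L^1(\mu)$, realise the restricted operator as an integral operator there, and cite that integral operators on $L^1$ are AM-compact. The only noteworthy difference is in part~(c): the paper obtains the jointly measurable density $h$ by citing \cite[Chapter~1, Corollary~5.4 and Example~1.4(i)]{revuz1984}, whereas you reconstruct it by the standard martingale argument along the finite algebras generated by a countable generating family of $\Sigma$ --- this is precisely how such disintegration results are proved, so the difference is one of self-containedness rather than strategy. In~(b) and~(d) your arguments are slightly more explicit than the paper's (which writes $S^*\varphi = S'f$ directly and calls~(d) an ``immediate consequence'' of~(a) and~(c)), but the content is identical.
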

\begin{proof}
	(a) First observe that the band $\{\mu\}^{\bot\bot}$ consists exactly of those measures that are absolutely continuous with respect to $\mu$, cf.\ \cite[Thm.\ 10.61]{aliprantis_border2006}. Thus, it follows from Formula~\eqref{eq:kernel-operator-on-measures} that $S$ maps $\mathscr{M}(\Omega)$ into $\{\mu\}^{\perp\perp}$. 
	
	(b) The band $\{\mu\}^{\bot\bot}$ is isometrically Banach lattice isomorphic to $L^1(\Omega,\mu)$ via the mapping $\Phi\colon \{\mu\}^{\bot\bot} \to L^1(\Omega,\mu)$ which maps each $\nu \in \{\mu\}^{\bot\bot}$ to its Radon-Nikodym derivative with respect to $\mu$, see \cite[Thm.\ 13.19]{aliprantis_border2006}.
	
	Let $\varphi \in \cM(\Omega)^*$. Its restriction to $\{\mu\}^{\perp\perp}$ induces, via $\Phi$, a functional on $L^1(\Omega,\mu)$ 
	which can in turn be represented by a function $\tilde f \in L^\infty(\Omega,\mu)$. By choosing a representative $f \in B_b(\Omega)$ of $\tilde f$, one obtains for each $\nu \in \cM(\Omega)$
	\begin{align*}
		& \langle S^*\varphi, \nu \rangle_{\langle \cM(\Omega)^*, \cM(\Omega)\rangle} = \langle \varphi|_{\{\mu\}^{\perp\perp}}, S\nu \rangle_{\langle (\{\mu\}^{\perp\perp})^*, \{\mu\}^{\perp\perp} \rangle} \\
		& = \langle \tilde f, \Phi S\nu \rangle_{\langle L^\infty(\Omega,\mu), L^1(\Omega,\mu) \rangle} = \langle f, S\nu \rangle_{\langle B_b(\Omega), \cM(\Omega) \rangle} = \langle S'f, \nu \rangle_{\langle B_b(\Omega), \cM(\Omega) \rangle}.
	\end{align*}
	Hence, $S^*\varphi = S'f \in B_b(\Omega)$.
	
	For the rest of the proof, let $\Sigma$ be countably generated.
	
	(c) As in the proof of (b) the band $\{\mu\}^{\bot\bot}$ can be identified with $L^1(\Omega, \mu)$; to keep the argument concise,
	the isomorphism $\Phi$, that was explicitly mentioned in the proof of~(b), is now suppressed in the notation.
	
	It thus suffices to prove that $S|_{L^1(\Omega, \mu)}$ is AM-compact.
	By assumption, for every $x\in \Omega$ there is $k_x\in L^1(\Omega,\mu)$ such that $k(x, \cdot) = k_xd\mu$. Then the mapping
	$K \colon \Omega \to L^1(\Omega, \mu)$, defined by $Kx = k_x$ is weakly measurable. As $\Sigma$ is countably generated, 
	$L^1(\Omega, \mu)$ is separable, whence $K$ is also separably valued. The Pettis measurability theorem yields that $K$ is strongly measurable. Since $\|k_x\|_{L^1} = k(x, \Omega)$ is integrable
	by assumption, $K \in L^1(\Omega, \mu; L^1(\Omega, \mu)) \simeq L^1(\Omega\times\Omega, \mu\otimes\mu)$. 
	By identifying $K$ with $h\in L^1(\Omega\times\Omega, \mu\otimes\mu)$, one obtains
	\[
		\big[Sf\big](x) = \int_\Omega h(x,y)\, d\mu(y),
	\]
	i.e.\ $S|_{L^1(\Omega, \mu)}$ is a so-called \emph{integral operator}. Since integral operators are AM-compact	(see for instance \cite[Theorem~A.3]{glueck_haase2019} or \cite[Appendix~A]{gerlach_glueck2019}), this implies the assertion.
	
	(d) This is an immediate consequence of~(a) and~(c).
\end{proof}

\begin{proof}[Proof of Theorem~\ref{t.main-bdd-functions}]
	``(ii) $\Rightarrow$ (i)'' Clearly, convergence in total variation norm implies convergence with respect to the $\sigma(\cM(\Omega), B_b(\Omega))$-topology. Hence, this implication follows from Proposition~\ref{p.weak-convergence-of-sg}.
	
	``(i) $\Rightarrow$ (iii)'' Assume that~(i) holds, and let $Q: B_b(\Omega) \to B_b(\Omega)$ denote the limit operator. Proposition~\ref{p.weak-convergence-of-sg} implies that $Q$ is a kernel operator and a projection onto $\fix \cT$, and that $P := Q': \cM(\Omega) \to \cM(\Omega)$ is a projection onto $\fix \cS$.
	
	Now, let $f \in B_b(\Omega)$ be a non-zero fixed vector of $\cT$. Choose a measure $\nu \in \cM(\Omega)$ that does not vanish on $f$ and observe that
	\begin{align*}
		0 \not= \langle \nu, f \rangle = \langle \nu, Qf\rangle = \langle P\nu, f\rangle.
	\end{align*}
	Hence, $\fix \cS$ separates $\fix \cT$.
	
	``(iii) $\Rightarrow$ (ii)'' According to Lemma~\ref{l.amcompact}(b), the norm adjoint of $S_{t_0}$ maps $\cM(\Omega)^*$ into $B_b(\Omega)$; every fixed point $\varphi$ of $\cS^*$ thus satisfies $\varphi = S_{t_0}^*\varphi \in B_b(\Omega)$. Hence, $\fix \cS^* = \fix \cT$ and, consequently, $\fix \cS$ separates $\fix \cS^*$ by~(iii).
	
	Since $\cS$ contains an AM-compact operator by Lemma~\ref{l.amcompact}(d), the assertion follows from the convergence result in Theorem~\ref{t.alconvergence}.
\end{proof}

\begin{rem} \label{r.converse-separation}
	Under the assumptions of  Theorem~\ref{t.main-bdd-functions}, $\fix \cT$  always separates $\fix \cS$,  no matter whether the equivalent assertions~(i)--(iii) are satisfied or not.
	
	Indeed, let $\nu \in \fix \cS$. Then the norm adjoint semigroup $\cS^*$ on the norm dual space $\cM(\Omega)^*$ has a fixed point $\varphi$ such that $\langle \varphi, \nu \rangle \not= 0$. To see this, choose a functional $\psi \in \cM(\Omega)^*$ such that $\langle \psi, \nu \rangle \not= 0$ and let $b \in \big(\ell^\infty(0,\infty)\big)^*$ be a Banach limit (which exists by the Markov--Kakutani fixed point theorem). Define $\varphi \in \cM(\Omega)^*$ by means of the formula
	\begin{align*}
		\langle \varphi, \lambda \rangle = \left \langle b \, , \, \big(\langle \psi, S_t \lambda \rangle\big)_{t \in (0,\infty)} \right \rangle
	\end{align*}
	for each $\lambda \in\cM(\Omega)$. As $b$ is a Banach limit, it follows that $\varphi$ is a fixed point of $\cS^*$; moreover,  $\langle \varphi, \nu \rangle = \langle \psi, \nu \rangle \not= 0$, so $\varphi$ does not vanish on $\nu$. 
	
	Finally, Lemma~\ref{l.amcompact} implies that $S_{t_0}^*$ maps $\cM(\Omega)^*$ into $B_b(\Omega)$. Hence, $\varphi = S_{t_0}^* \varphi \in B_b(\Omega)$, i.e., $\varphi$ is a fixed point of $\cT$ that does not vanish on the given fixed point $\nu$ of $\cS$.
\end{rem}

\section{Stability of strongly Feller semigroups} \label{s.strong-feller-semigroups}

This section focusses on the special case where $\Omega$ is a \emph{Polish space}, i.e., a separable topological space that is metrizable through a complete metric, endowed with its Borel $\sigma$-algebra $\mathscr{B}(\Omega)$.
Besides $B_b(\Omega)$ we shall also consider the space $C_b(\Omega)$ of bounded continuous functions on $\Omega$. When endowed with the supremum norm, this is a Banach lattice and in fact a  closed sublattice of $B_b(\Omega)$.

A kernel operator $T$ on $B_b(\Omega)$ is called \emph{strongly Feller} if $TB_b(\Omega) \subseteq C_b(\Omega)$. Operators that are strongly Feller have the following very useful property, which makes it possible to apply the results from Section~\ref{s.norm-conv-and-tauber} -- in particular, Theorem~\ref{t.main-bdd-functions} -- to semigroups that contain a strongly Feller operator.

\begin{prop} \label{p.dual-of-strong-feller-op}
	Let $\Omega$ be a Polish space and let $T$ be a positive kernel operator on $B_b(\Omega)$ with kernel $k$. If $T$ is strongly Feller, then there exists a measure $0 \le \mu \in \mathscr{M}(\Omega)$ such that $k(x,\argument)$ is absolutely continuous with respect to $\mu$ for each $x \in \Omega$.
\end{prop}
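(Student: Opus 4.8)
The plan is to produce a single finite measure $\mu$ dominating all the measures $k(x,\argument)$ simultaneously, exploiting that $\Omega$ is Polish (hence separable and second countable) together with the strong Feller property, which forces the map $x \mapsto k(x,\argument)$ to vary continuously in a suitable weak sense. First I would fix a countable dense subset $\{x_n : n \in \N\}$ of $\Omega$ and set $\mu_0 \coloneqq \sum_{n} 2^{-n} \frac{k(x_n,\argument)}{1 + k(x_n,\Omega)}$, which is a well-defined finite positive measure on $\Omega$ (positivity of $T$ gives positivity of $k$, and boundedness of $T$ bounds $k(x_n,\Omega)$ uniformly). The claim I want is that $k(x,\argument) \ll \mu_0$ for \emph{every} $x \in \Omega$, not just the $x_n$; the strongly Feller property will be the tool that upgrades the countable family to all of $\Omega$.

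The key step is the following: if $A \in \mathscr{B}(\Omega)$ satisfies $\mu_0(A) = 0$, I want to conclude $k(x,A) = 0$ for all $x$. By construction $k(x_n,A) = 0$ for all $n$, i.e.\ $(T\one_A)(x_n) = 0$ for all $n$. But $T$ is strongly Feller, so $T\one_A \in C_b(\Omega)$; a continuous function vanishing on a dense set vanishes everywhere, hence $(T\one_A)(x) = k(x,A) = 0$ for all $x \in \Omega$. This shows $k(x,\argument) \ll \mu_0$ for every $x$, and we may take $\mu \coloneqq \mu_0$. (If one prefers a probability measure one can normalise, but finiteness is all that is needed.)

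I expect the only genuine subtlety to be the well-definedness and measurability bookkeeping for $\mu_0$: one must check that $\mu_0$ is genuinely a countably additive finite measure — this follows from monotone convergence since the partial sums form an increasing sequence of finite measures with uniformly bounded total mass $\sum_n 2^{-n} = 1$ — and, more importantly, that $T\one_A \in C_b(\Omega)$ is legitimate, i.e.\ that $\one_A \in B_b(\Omega)$, which holds because $A$ is Borel. The rest is the two-line density argument above. There is no real obstacle here: the whole point is that the strong Feller property converts the pointwise identity "$k(\cdot,A)$ vanishes on a dense set" into "$k(\cdot,A)$ vanishes identically", which is exactly what reduces domination over all of $\Omega$ to domination over a countable dense set — and the latter is trivially arranged by the weighted-sum construction.
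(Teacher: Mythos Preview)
Your argument is correct and is essentially identical to the paper's own proof: the paper also fixes a dense sequence $(x_n)$, sets $\mu \coloneqq \sum_n 2^{-n} k(x_n,\argument)$ (without your extra normalisation, using instead the uniform bound $\sup_x k(x,\Omega)<\infty$ for convergence), and then uses continuity of $T\one_A$ to pass from vanishing on the dense set to vanishing everywhere. The only difference is cosmetic.
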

\begin{proof}
	Fixing  a dense sequence $(x_n)_{n\in \N}$ in $\Omega$, it is easy to see that 
	\begin{align*}
		\mu \coloneqq \sum_{n \in \N} 2^{-n} k(x_n,\argument);
	\end{align*}
has the desired properties, see \cite[proof of Lem.\ 1.5.9]{revuz1984}	
\end{proof}

Proposition~\ref{p.dual-of-strong-feller-op} shows that if $T$ on $B_b(\Omega)$ is strongly Feller, then the dual operator $T'$ on $\mathscr{M}(\Omega)$ satisfies the assumptions of Lemma~\ref{l.amcompact}.

There is an even  stronger notion than that of a strongly Feller operator. To discuss this, we use the concept of the \emph{strict topology} $\beta_0$ on $C_b(\Omega)$ for a Polish space $\Omega$, defined as follows. Denote by $\mathscr{F}_0(\Omega)$ the set of all functions $\varphi: \Omega \to \R$ that \emph{vanish at infinity}, i.e.\ given $\eps>0$ there is a compact set $K\subseteq \Omega$ such that $|f(x)| \leq \eps$ for all $x \in \Omega \setminus K$. The \emph{strict topology $\beta_0$} is the locally convex topology generated by the set of seminorms $\{p_\varphi : \varphi \in \mathscr{F}_0(\Omega)\}$, where $p_\varphi(f) \coloneqq \|\varphi f\|_\infty$.

The strict topology has the following properties. It is consistent with the duality $(C_b(\Omega), \mathscr{M}(\Omega))$, i.e., the dual space $(C_b(\Omega), \beta_0)'$ is $\mathscr{M}(\Omega)$, see \cite[Thm.\ 7.6.3]{jarchow1981}. As a matter of fact, it is actually the Mackey topology of the dual pair $(C_b(\Omega), \mathscr{M}(\Omega))$, i.e.\ the finest locally convex topology on $C_b(\Omega)$ which yields $\mathscr{M}(\Omega)$ as a dual space, see \cite[Thm.\ 4.5 and 5.8]{sentilles1972}. It follows that if $T$ is a kernel operator on $B_b(\Omega)$ that leaves $C_b(\Omega)$ invariant, then the restriction of $T$ to $C_b(\Omega)$ is automatically $\beta_0$-continuous. By \cite[Thm.\ 2.10.4]{jarchow1981} $\beta_0$ coincides on $\|\cdot\|_\infty$-bounded subsets of $C_b(\Omega)$ with the compact-open topology. In particular, a $\|\cdot\|_\infty$-bounded net converges with respect to $\beta_0$ if and only if it converges uniformly on every compact subset of $\Omega$.

For a Polish space $\Omega$, a kernel operator $T$ on $B_b(\Omega)$ is called \emph{ultra Feller} if it maps bounded subsets of $B_b(\Omega)$ to relatively $\beta_0$-compact subsets of $C_b(\Omega)$. Clearly, every ultra Feller operator is strongly Feller. On a more interesting note, the following result holds in the converse direction:

\begin{prop}\label{p.ultrafeller}
	Let $\Omega$ be a Polish space. If a positive kernel operator $T$ on $B_b(\Omega)$ is strongly Feller, then $T^2$ is ultra Feller.
\end{prop}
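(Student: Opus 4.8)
The plan is to reduce the assertion to an equicontinuity statement for $\{T^2f:\|f\|_\infty\le 1\}$ and then to upgrade the merely \emph{weak} convergence supplied by the strong Feller property to genuine \emph{norm} convergence; this last step is the heart of the matter, and it is exactly where the finiteness of a dominating measure and \emph{both} factors of $T^2$ enter.

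First I would fix a dominating measure and establish weak continuity of the kernel. By Proposition~\ref{p.dual-of-strong-feller-op} there is a finite measure $0\le\mu\in\cM(\Omega)$ with $k(x,\argument)\ll\mu$ for every $x\in\Omega$, so $\Theta\colon\Omega\to\cM(\Omega)$, $\Theta(x)\coloneqq k(x,\argument)$, takes values in the band $\{\mu\}^{\perp\perp}$, which (as in the proof of Lemma~\ref{l.amcompact}) is isometrically lattice isomorphic to $L^1(\mu)$. On $\{\mu\}^{\perp\perp}$ the weak topology $\sigma(L^1(\mu),L^\infty(\mu))$ agrees with the restriction of $\sigma(\cM(\Omega),B_b(\Omega))$, since every class in $L^\infty(\mu)$ has a representative in $B_b(\Omega)$ and conversely. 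As $x\mapsto\langle\Theta(x),f\rangle=(Tf)(x)$ is continuous for each $f\in B_b(\Omega)$ by the strong Feller property, $\Theta$ is continuous from $\Omega$ into $L^1(\mu)$ equipped with its weak topology.

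Next I would pass to the dual operator, and this is where I expect the main obstacle to lie. Put $S\coloneqq T'$, a positive kernel operator on $\cM(\Omega)$ with kernel $k$; since $\Omega$ is Polish, $\mathscr{B}(\Omega)$ is countably generated, so Lemma~\ref{l.amcompact}(a),(c) applies: $S$ maps $\cM(\Omega)$ into $\{\mu\}^{\perp\perp}$ and $S|_{\{\mu\}^{\perp\perp}}$ is AM-compact on $\{\mu\}^{\perp\perp}\cong L^1(\mu)$. The crucial — and least routine — point is that \emph{an AM-compact operator on $L^1(\mu)$ over a finite measure space maps weakly convergent sequences to norm-convergent sequences}. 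Indeed, if $g_n\to g$ weakly in $L^1(\mu)$ then $(g_n)$ is uniformly integrable by the Dunford--Pettis theorem, so for each $\eps>0$ it is contained in $[-c\one,c\one]+\eps B_{L^1(\mu)}$ for a suitable $c>0$ (here $\one\in L^1(\mu)$, i.e.\ finiteness of $\mu$, is used); applying the operator, whose image of the order interval is relatively compact, one sees that the image sequence is relatively norm-compact, and being weakly convergent it then converges in norm. Applying this with $g_n=\Theta(x_n)$ for $x_n\to x_0$, and using that $\Omega$ is metrizable so that norm continuity is sequential, I obtain that $S\circ\Theta\colon\Omega\to\cM(\Omega)$ is continuous for the total variation norm.

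Finally I would translate this back. The operator $T^2$ is again a kernel operator (a composition of kernel operators is one), and by formula~\eqref{eq:kernel-operator-on-measures} its kernel satisfies $k^{(2)}(x,A)=(S\Theta(x))(A)=\int_\Omega k(y,A)\,k(x,\dx y)$, so the previous step says precisely that $x\mapsto k^{(2)}(x,\argument)$ is total-variation continuous. Hence for $x_0\in\Omega$ and $x\to x_0$,
\[
\sup_{\|f\|_\infty\le 1}\bigl|(T^2f)(x)-(T^2f)(x_0)\bigr|\le\bigl\|k^{(2)}(x,\argument)-k^{(2)}(x_0,\argument)\bigr\|_{\cM(\Omega)}\to 0 ,
\]
so $\{T^2f:f\in B_b(\Omega),\ \|f\|_\infty\le 1\}$ is equicontinuous; it is also $\|\argument\|_\infty$-bounded (by $\|T\|^2$). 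By the Arzelà--Ascoli theorem this family is relatively compact in $C_b(\Omega)$ for the compact-open topology, which on $\|\argument\|_\infty$-bounded sets coincides with $\beta_0$; since scalar multiples and subsets of relatively $\beta_0$-compact sets are again relatively $\beta_0$-compact, $T^2$ maps bounded subsets of $B_b(\Omega)$ to relatively $\beta_0$-compact subsets of $C_b(\Omega)$, i.e.\ $T^2$ is ultra Feller.
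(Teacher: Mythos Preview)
Your proof is correct and takes a genuinely different route from the paper's. Both arguments ultimately reduce to showing that the family $\{T^2f:\|f\|_\infty\le1\}$ is equicontinuous, after which an Arzel\`a--Ascoli argument yields relative $\beta_0$-compactness. The paper obtains equicontinuity \emph{indirectly}: it cites \cite[Lemmas~5.10 and~5.11 in Chapter~1]{revuz1984} as a black box to extract, from any bounded sequence $(f_n)$, a subsequence with $T^2f_{n_k}$ convergent uniformly on compacts, and then reads equicontinuity off from the classical Arzel\`a--Ascoli characterization on each compact set. You instead establish equicontinuity \emph{directly} by proving that $x\mapsto k^{(2)}(x,\argument)$ is continuous in total variation; the key step is the observation that an AM-compact operator on $L^1$ over a \emph{finite} measure space is completely continuous (it maps weakly convergent sequences to norm-convergent ones, via Dunford--Pettis and truncation), which you combine with the weak continuity of $x\mapsto k(x,\argument)$ furnished by the strong Feller property. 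This makes the mechanism behind the ``second factor of $T$'' fully transparent and uses only tools already developed in the paper (Proposition~\ref{p.dual-of-strong-feller-op} and Lemma~\ref{l.amcompact}(a),(c)), whereas the paper's argument is shorter at the cost of outsourcing the substantive work to \cite{revuz1984}. One minor point: for the final passage from equicontinuity to relative $\beta_0$-compactness the paper invokes an Arzel\`a--Ascoli theorem tailored to the strict topology \cite[Theorem~3.6]{khan1979}; your route via the compact-open Arzel\`a--Ascoli together with the coincidence of $\beta_0$ and the compact-open topology on $\|\argument\|_\infty$-bounded sets is equivalent, though it tacitly uses that the compact-open closure of a uniformly bounded set stays bounded (so that the two topologies still agree on the closure).
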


This proposition is quite well-known; for the convenience of the reader we include a proof.

\begin{proof}[Proof of Proposition~\ref{p.ultrafeller}]
	It follows from \cite[Lem.\ 5.10 and~5.11 in Chapter~1]{revuz1984} that if $T$ is a positive, strongly Feller operator, then for any bounded sequence $(f_n)_{n\in \N} \subseteq B_b(\Omega)$ one can extract a subsequence $f_{n_k}$ such that $T^2f_{n_k}$ converges to some continuous function $g$ uniformly on every compact subset of $\Omega$. Since compactness in $C(K)$ is the same as sequential compactness, the classical Arzel\`{a}--Ascoli theorem yields
that, for a bounded subset $B \subseteq B_b(\Omega)$, the image $T^2B \subseteq C_b(\Omega)$ is equicontinuous on any compact subset of $\Omega$. Employing a Arzel\`{a}--Ascoli theorem for the strict topology, see \cite[Thm.\ 3.6]{khan1979}, it follows that $T^2B$ is relatively $\beta_0$-compact.
\end{proof}

\begin{rem}
	Let $\Omega$ be a Polish space and let $T$ be a kernel operator on $B_b(\Omega)$ with dual operator $S$ on $\cM(\Omega)$. If $T$ is strongly Feller, then it follows from Proposition~\ref{p.dual-of-strong-feller-op} and Lemma~\ref{l.amcompact}(b) that the norm adjoint $(S^2)^*$ of $S^2$ maps the norm dual $\cM(\Omega)^*$ of $\cM(\Omega)$ to $C_b(\Omega)$. On the other hand, $S^2$ is the dual kernel operator of the ultra Feller operator $T^2$ (Proposition~\ref{p.ultrafeller}).

	It is worthwhile to mention that a similar result remains true for general ultra Feller operators -- not only for those that are the square of a strongly Feller operator. This is not needed in what follows, but it is an interesting observation for its on sake, so we provide a proof for this fact in the appendix (Proposition~\ref{p.ultrafeller2}).
\end{rem}

The following theorem is the main result in this section. It is an analogue of Theorem~\ref{t.main-bdd-functions} for semigroups that contain a strongly Feller operator.

\begin{thm} \label{t.main-cont-functions}
	Let $\Omega$ be a Polish space, let $\cT = (T_t)_{t \in (0,\infty)}$ be a bounded semigroup of positive kernel operators on $B_b(\Omega)$ and let $\cS = (S_t)_{t \in (0,\infty)} := (T_t')_{t \in (0,\infty)}$ denote the dual semigroup on $\cM(\Omega)$. If $T_{t_0}$ is strongly Feller for some $t_0 \in (0,\infty)$, then the following assertions are equivalent:
	\begin{enumerate}[\upshape (i)]
		\item 
		For each $f\in B_b(\Omega)$, $T_tf$ converges pointwise to a function $Qf\in B_b(\Omega)$ as $t \to \infty$.

		\item 
		For each $f \in B_b(\Omega)$, $T_tf$ converges uniformly on compact subsets of $\Omega$ to a function $Qf$ in $C_b(\Omega)$ as $t \to \infty$.
		
		\item 
		For each $\nu \in \mathscr{M}(\Omega)$, $S_t\nu$ converges with respect to the total variation norm as $t \to \infty$.
		
		\item 
		The space $\fix \cS$ separates $\fix \cT$.
	\end{enumerate}
	If the equivalent conditions~{\upshape(i)}--{\upshape(iv)}~are satisfied, then the limit operator $Q$ in~{\upshape(i)} and {\upshape(ii)} is a kernel operator on $B_b(\Omega)$ that is ultra Feller.
\end{thm}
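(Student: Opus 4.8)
The plan is to derive the equivalence of~(i), (iii) and~(iv) almost directly from Theorem~\ref{t.main-bdd-functions}, and to obtain the sharper convergence statement~(ii) together with the ultra Feller property of $Q$ from the smoothing effect of the strongly Feller operator $T_{t_0}$. Two preliminary observations come first: if $f \in \fix \cT$ then $f = T_{t_0}f \in T_{t_0}B_b(\Omega) \subseteq C_b(\Omega)$, which settles the parenthetical claim in~(iv); and by Proposition~\ref{p.dual-of-strong-feller-op} the kernel $k_{t_0}$ of $T_{t_0}$ is dominated by a single positive finite measure $\mu$ in the precise sense required in Theorem~\ref{t.main-bdd-functions}. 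Since the Borel $\sigma$-algebra of a Polish space is countably generated, that theorem applies, and its assertions~(i)--(iii) are exactly our~(i), (iv) and~(iii); so it only remains to fold in assertion~(ii).

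For ``(ii)$\Rightarrow$(i)'' I would use that every finite Borel measure $\nu$ on a Polish space is tight: given $\eps>0$ choose a compact $K$ with $|\nu|(\Omega\setminus K)<\eps$ and split $\langle \nu,\, T_tf - Qf\rangle$ into the integrals over $K$ and over $\Omega\setminus K$; the first tends to $0$ since $(T_tf)$ converges uniformly on $K$, while the second is at most $2M\|f\|_\infty\,\eps$ with $M = \sup_{t>0}\|T_t\|$, and letting $\eps\to 0$ yields $\sigma(B_b(\Omega),\cM(\Omega))$-convergence. For ``(i)$\Rightarrow$(ii)'', set $t_1 := 2t_0$, so that $T_{t_1} = T_{t_0}^2$ is ultra Feller by Proposition~\ref{p.ultrafeller}. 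Fix $f\in B_b(\Omega)$; the set $\{T_sf : s>0\}$ is norm bounded, so the shifted net $(T_{s+t_1}f)_{s>0} = T_{t_1}\{T_sf : s>0\}$ lies in a $\beta_0$-compact subset of $C_b(\Omega)$, on which $\beta_0$ agrees with the compact-open topology. Since $\beta_0$ is consistent with the duality $(C_b(\Omega),\cM(\Omega))$, every $\beta_0$-convergent subnet of $(T_{s+t_1}f)_{s>0}$ converges in $\sigma(C_b(\Omega),\cM(\Omega))$, and by~(i) its limit equals $Qf$ (using that $\cM(\Omega)$ separates the points of $B_b(\Omega)$). A net in a compact space all of whose convergent subnets share a common limit converges to that limit; hence $T_{s+t_1}f \to Qf$ in $\beta_0$, i.e.\ uniformly on compact sets, which is~(ii) after shifting the parameter, and moreover $Qf = T_{t_1}Qf \in C_b(\Omega)$.

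For the final claim I would use that $Q$ is a kernel operator on $B_b(\Omega)$ by Proposition~\ref{p.weak-convergence-of-sg}. Passing to the $\sigma(B_b(\Omega),\cM(\Omega))$-limit in $T_{s+t_1}f = T_{t_1}(T_sf)$ and using that the kernel operator $T_{t_1}$ is $\sigma(B_b(\Omega),\cM(\Omega))$-continuous gives $Q = T_{t_1}Q$. Since $T_{t_1}$ is ultra Feller and $Q$ maps bounded sets to bounded sets, $Q = T_{t_1}Q$ sends bounded subsets of $B_b(\Omega)$ to relatively $\beta_0$-compact subsets of $C_b(\Omega)$, i.e.\ $Q$ is ultra Feller.

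The main obstacle is the step ``(i)$\Rightarrow$(ii)'', i.e.\ upgrading bare $\sigma(B_b(\Omega),\cM(\Omega))$-convergence of $T_tf$ to uniform convergence on compacta. The decisive point is to argue not with the semigroup directly but with the once-smoothed net $(T_{t+t_1}f)_t$, for which the ultra Feller property of $T_{t_1}$ supplies $\beta_0$-precompactness; one then has to invoke carefully the general fact that a net in a compact space with a unique subnet limit converges, and that this limit is legitimately identified with $Qf$ precisely because $\beta_0$ is consistent with the pairing against $\cM(\Omega)$. Everything else reduces to a routine application of Theorem~\ref{t.main-bdd-functions} and of the tightness of finite Borel measures on Polish spaces.
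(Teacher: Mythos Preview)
Your proof is correct and follows essentially the same route as the paper: reduce (i)$\Leftrightarrow$(iii)$\Leftrightarrow$(iv) to Theorem~\ref{t.main-bdd-functions} via Proposition~\ref{p.dual-of-strong-feller-op}, and use the ultra Feller property of $T_{2t_0}$ together with a $\beta_0$-compactness/subnet argument for (i)$\Rightarrow$(ii). The only cosmetic differences are that for (ii)$\Rightarrow$(i) the paper invokes the duality $(C_b(\Omega),\beta_0)'=\cM(\Omega)$ where you argue directly via tightness, and for the ultra Feller property of $Q$ the paper uses $Q=Q^2$ with $Q$ strongly Feller (from~(ii)) and Proposition~\ref{p.ultrafeller}, whereas you use $Q=T_{t_1}Q$; both variants are valid.
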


\begin{proof}
	``(i)~$\Rightarrow$~(ii)'' For $t \ge t_0$ the function $T_tf$ is contained in $C_b(\Omega)$, and for $t \ge 2t_0$, $T_tf$ is -- due to the boundedness of $\cT$ -- contained in a $t$-independent $\beta_0$-compact set $K \subseteq C_b(\Omega)$. Hence, every subnet of $(T_tf)$ has a subnet which is $\beta_0$-convergent, and the limit equals $Qf$ since the $\beta_0$-convergence is stronger than pointwise convergence. Hence, $T_tf$ is $\beta_0$-convergent to $Qf$ as $t\to \infty$. As $\cT$ is bounded, this is equivalent to uniform convergence of compact subsets of $\Omega$. Consequently, the restriction of $Qf$ to any compact subset of $\Omega$ is continuous, which implies -- due to the metrizability of $\Omega$ -- that $Qf$ itself is continuous, i.e., $Qf \in C_b(\Omega)$. This proves~(ii). 
	
	``(ii)~$\Rightarrow$~(i)'' This implication is obvious.
	
	``(i)~$\Rightarrow$~(iii)~$\Leftrightarrow$~(iv)'' These implications follow from Theorem~\ref{t.main-bdd-functions}. To see this, one only has to note that the kernel associated to $T_{t_0}$ satisfies the assumptions of that theorem according to Proposition~\ref{p.dual-of-strong-feller-op}, and that pointwise convergence of a bounded sequence in $B_b(\Omega)$ is equivalent to convergence with respect to the $\sigma(B_b(\Omega), \cM(\Omega))$-topology by the dominated convergence theorem.
	
	``(iii) $\Rightarrow$ (i)'' Since convergence with respect to the total variation norm implies convergence with respect to the $\sigma(\cM(\Omega), B_b(\Omega))$-topology, this implication follows from Proposition~\ref{p.weak-convergence-of-sg}.
	
	Finally, assume that~(i)--(iv) hold. By Theorem~\ref{t.main-bdd-functions} $Q$ is a projection and a kernel operator on $B_b(\Omega)$. Moreover, it follows from~(ii) that $Q$ is strongly Feller, so $Q = Q^2$ is even ultra Feller by Proposition~\ref{p.ultrafeller}.
\end{proof}

\begin{rem}
	In applications it is often more practical to work with a semigroup on $C_b(\Omega)$ rather than with a semigroup on $B_b(\Omega)$. An operator $T$ on $C_b(\Omega)$ is called a \emph{kernel operator} if there exists a kernel $k$ such that
	\eqref{eq:kernel-operator-on-functions} holds for all $f \in C_b(\Omega)$. Note that in this case, $T$ can be extended to a kernel operator on $B_b(\Omega)$ by using the representation \eqref{eq:kernel-operator-on-functions} for $f\in B_b(\Omega)$.
	As $C_b(\Omega)$ is $\sigma(B_b(\Omega), \mathscr{M}(\Omega))$-dense in $B_b(\Omega)$, this extension is unique.
	
	Thus, if $\mathscr{T}_C$ is a semigroup on $C_b(\Omega)$ consisting of kernel operators, then it can be uniquely extended to a semigroup $\mathscr{T}$ on $B_b(\Omega)$. Moreover, if
$\mathscr{T}$ contains a strongly Feller operator, then the fixed points of $\mathscr{T}$ are necessarily continuous
	so that $\fix \mathscr{T} = \fix\mathscr{T}_C$ and knowledge of $\mathscr{T}_C$ is sufficient to verify condition (iv) in Theorem \ref{t.main-cont-functions}. It is not difficult to see that a bounded operator on $C_b(\Omega)$ is a kernel operator if and only if it is $\sigma(C_b(\Omega), \mathscr{M}(\Omega))$-continuous, see \cite[Prop.\ 3.5]{kunze2009}.
\end{rem}

Up to now, no regularity assumptions on the orbits of the semigroup $\mathscr{T}$ have been made. However, semigroups appearing in applications typically have orbits that are regular in some sense, so that a generator can be associated to them. In this situation, it is possible to reformulate condition~(iv) in Theorem \ref{t.main-cont-functions} in terms of the generator, since the (dual) fixed space of the semigroup is then simply the kernel of the (dual) generator. This facilitates the applicability to concrete differential equations.

A generator can be defined under very weak time regularity assumptions, namely a weak measurability assumption.
Consider a bounded semigroup of kernel operators (equivalently, $\sigma(C_b(\Omega), \cM(\Omega))$-continuous operators) $\mathscr{T}_C= (T_t^C)_{t>0}$ on $C_b(\Omega)$; we will explain in a moment why we prefer to work on $C_b(\Omega)$. The semigroup $\mathscr{T}_C$ is called \emph{weakly measurable} if for every $f\in C_b(\Omega)$ and $\mu \in \mathscr{M}(\Omega)$ the scalar function $t\mapsto \langle T_t^Cf, \mu\rangle$ is measurable. It follows from \cite[Thm.\ 6.2]{kunze2011} that $\mathscr{T}_C$ is an integrable semigroup in the sense of \cite[Definition~5.1]{kunze2011}. Note that the assumption that
the orbits are $\sigma$-continuous at $0$ made there is only needed to ensure that (i) the semigroup is weakly measurable (which we assume here) and (ii) to ensure that weak integration of Radon-measure-valued functions yields a Radon-measure as result (which is not needed here, as on a Polish space, every measure is a Radon measure).

That the semigroup is integrable means that for every complex $\lambda$ with $\Re\lambda>0$ there is an operator $R(\lambda) \in \mathscr{L}(C_b(\Omega))$ that is $\sigma(C_b(\Omega), \cM(\Omega))$-continuous such that
\[
\langle R(\lambda)f, \mu\rangle = \int_0^\infty  e^{- t \lambda} \langle T_t f, \mu\rangle\, dt
\]
for all $f\in C_b(\Omega)$ and $\mu \in \mathscr{M}(\Omega)$. One can show that $(R(\lambda))_{\Re\lambda>0}$ is a pseudoresolvent (see \cite[Prop.\ 5.2]{kunze2011}), so that whenever it is injective, it is the resolvent of an operator
$A$, i.e., $R(\lambda) = R(\lambda, A) = (\lambda- A)^{-1}$. It is here where we benefit from working on $C_b(\Omega)$:
the same construction could be done on $B_b(\Omega)$ but, in general, the pseudoresolvent obtained on $B_b(\Omega)$ does not turn out to be injective. This is already the case for the semigroup generated by the Laplacian (with, say, Neumann boundary conditions) on a domain $\Omega$, where any function that is zero almost everywhere is in the kernel of the pseudoresolvent.

On $C_b(\Omega)$, however, $R(\lambda)$ is often injective, and in this case the operator $A$ such that $R(\lambda) = (\lambda- A)^{-1}$ is called the \emph{generator of $\mathscr{T}_C$}. Thus, the generator is defined as the unique operator whose resolvent is the Laplace transform of the semigroup. However, if the orbits of the semigroup are not only measurable but even continuous in a certain sense, then this definition is equivalent with the `differential definition' of the generator as derivative of the semigroup at zero, see \cite[Thm.\ 2.10]{kunze2009}. It is a consequence of \cite[Prop.\ 5.7]{kunze2011} that $f\in \fix \mathscr{T}$ if and only if $f\in \ker A$. 

We should point out that the resolvent of $A$ consists of kernel operators, whence one may consider the $\sigma(C_b(\Omega), \cM(\Omega))$-adjoint operators $R(\lambda, A)'$ on $\mathscr{M}(\Omega)$. This is the Laplace transform of the $\sigma(C_b(\Omega), \cM(\Omega))$-adjoint semigroup
$\mathscr{S} \coloneqq \mathscr{T}_C'$ and if $A$ is $\sigma(C_b(\Omega), \cM(\Omega))$-densely defined, then $R(\argument, A)'$ is also a resolvent (see \cite[Prop.\ 2.7]{kunze2009}). In fact, one has $R(\lambda, A)' = R(\lambda, A')$, where $A'$ is the $\sigma(C_b(\Omega), \cM(\Omega))$-adjoint of the operator $A$. Thus, $A'$ is the generator of $\mathscr{T}_C'$ and $\fix \mathscr{T}_C' = \ker A'$. Thus, Theorem \ref{t.main-cont-functions} may be reformulated in this situation.
Note that since $(C_b(\Omega), \beta_0)'= \cM(\Omega)$, the Hahn--Banach theorem yields that the $\beta_0$-closure of a convex set is the same as the $\sigma(C_b(\Omega), \cM(\Omega))$-closure of that set.

\begin{cor}\label{c.main}
	Assume that $\mathscr{T}_C= (T_t^C)_{t\in (0,\infty)} \subseteq \mathscr{L}(C_b(\Omega))$ is a bounded, weakly measurable semigroup of positive kernel operators that contains a strongly Feller operator and has a $\sigma(C_b(\Omega), \cM(\Omega))$-densely defined generator $A$. If $\ker A'$ separates $\ker A$, then $C_b(\Omega) = \fix \mathscr{T}_C \oplus \overline{\lh}^{\beta_0}(I-\mathscr{T}_C)$. Denote by $P$ the projection onto $\fix \mathscr{T}$ along $\overline{\lh}^{\beta_0}(I-\mathscr{T})$. Then, in addition:
	\begin{enumerate}[\upshape (a)]
		\item For every $f\in C_b(\Omega)$ we have $T_t^Cf \to Pf$ uniformly on compact subsets of $\Omega$ as $t \to \infty$.
		
		\item For every $\mu \in \mathscr{M}(\Omega)$ we have $(T_t^C)'\mu \to P'\mu$ in total variation norm as $t\to \infty$.
	\end{enumerate}
\end{cor}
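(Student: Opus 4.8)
The plan is to deduce the corollary from Theorem~\ref{t.main-cont-functions} by passing to the canonical extension of $\mathscr{T}_C$ to $B_b(\Omega)$. First I would invoke the Remark following Theorem~\ref{t.main-cont-functions}: since each $T_t^C$ is a kernel operator, $\mathscr{T}_C$ extends uniquely to a bounded semigroup $\mathscr{T} = (T_t)_{t\in(0,\infty)}$ of positive kernel operators on $B_b(\Omega)$ that contains a strongly Feller operator $T_{t_0}$. Writing $\mathscr{S} = (S_t)_{t\in(0,\infty)} := (T_t')_{t\in(0,\infty)}$ for the dual semigroup on $\cM(\Omega)$ (which coincides with $\mathscr{T}_C'$, both being the semigroup of kernel operators on $\cM(\Omega)$ built from the kernels $k_t$), I would next identify the relevant fixed spaces: the presence of a strongly Feller operator forces fixed functions to be continuous, so $\fix\mathscr{T} = \fix\mathscr{T}_C = \ker A$ by the discussion preceding the corollary, while $\fix\mathscr{S} = \fix\mathscr{T}_C' = \ker A'$. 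Consequently the hypothesis that $\ker A'$ separates $\ker A$ is exactly assertion~(iv) of Theorem~\ref{t.main-cont-functions} for $\mathscr{T}$.

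With this identification, Theorem~\ref{t.main-cont-functions} supplies all of its equivalent assertions for $\mathscr{T}$, together with a limit operator $Q$ on $B_b(\Omega)$ --- characterized by $T_t f \to Qf$ uniformly on compact sets for all $f \in B_b(\Omega)$ --- that is an ultra Feller kernel operator and a projection onto $\fix\mathscr{T}$. I would set $P := Q|_{C_b(\Omega)}$; since $QB_b(\Omega) = \fix\mathscr{T}_C \subseteq C_b(\Omega)$ and $Q^2 = Q$, this $P$ is a projection of $C_b(\Omega)$ onto $\fix\mathscr{T}_C$. Then~(b) is immediate, because for $f\in C_b(\Omega)$ we have $T_t^C f = T_t f \to Pf$ uniformly on compact sets, i.e.\ in $\beta_0$. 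For~(c), Theorem~\ref{t.main-cont-functions} already yields total-variation convergence of $(T_t^C)'\mu = S_t\mu$; combining this with the weak convergence $S_t\mu \to Q'\mu$ from Proposition~\ref{p.weak-convergence-of-sg}, and noting that $Q'$ (the kernel operator on $\cM(\Omega)$ sharing the kernel of $Q$) coincides with the adjoint $P'$ of $P$, one concludes that the total-variation limit is $P'\mu$.

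The remaining point is~(a), i.e.\ the identification $\ker P = \overline{\lh}^{\beta_0}(\id - \mathscr{T}_C)$; once this is known, $P$ being a projection onto $\fix\mathscr{T}_C$ gives the desired decomposition of $C_b(\Omega)$. Here I would argue by double inclusion. On the one hand $\ker P = \ker Q \cap C_b(\Omega)$ is $\sigma(C_b(\Omega),\cM(\Omega))$-closed (as $Q$ is $\sigma(B_b(\Omega),\cM(\Omega))$-continuous) and contains $\lh(\id - \mathscr{T}_C)$ (since $Q$ commutes with each $T_t$ and $QB_b(\Omega) = \fix\mathscr{T}$), hence it contains the $\sigma(C_b(\Omega),\cM(\Omega))$-closure of $\lh(\id - \mathscr{T}_C)$, which by Hahn--Banach for the dual pair $(C_b(\Omega),\cM(\Omega))$ equals the $\beta_0$-closure, as recorded just before the corollary. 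On the other hand, if $f\in C_b(\Omega)$ lies outside that closure, Hahn--Banach produces a measure $\mu\in\cM(\Omega)$ annihilating $\lh(\id - \mathscr{T}_C)$ with $\langle\mu,f\rangle\neq 0$; annihilation forces $S_t\mu = \mu$ for all $t>0$ (because $C_b(\Omega)$ separates $\cM(\Omega)$), so $\mu\in\fix\mathscr{S}$ and therefore $Q'\mu = \mu$ by Proposition~\ref{p.weak-convergence-of-sg}(a); then $\langle\mu,Qf\rangle = \langle Q'\mu,f\rangle = \langle\mu,f\rangle\neq 0$, whence $f\notin\ker Q$ and hence $f\notin\ker P$.

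The conceptual weight is carried entirely by Theorem~\ref{t.main-cont-functions}; what remains is the faithful translation between its $B_b(\Omega)$-formulation and the $C_b(\Omega)$-statement of the corollary. I expect the main (though still essentially routine) obstacle to be the last paragraph: keeping straight the three $\cM(\Omega)$-operators in play --- the weak-$\ast$ limit $Q'$, the adjoint $P'$ of $P$, and the projection onto $\fix\mathscr{S} = \ker A'$ --- and verifying that the $\beta_0$-closure of $\lh(\id - \mathscr{T}_C)$ really does coincide with its $\sigma(C_b(\Omega),\cM(\Omega))$-closure, so that the Hahn--Banach separation argument can be run on $C_b(\Omega)$ rather than on $B_b(\Omega)$.
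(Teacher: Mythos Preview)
Your proposal is correct and follows exactly the approach the paper intends: the paper states the corollary as a direct reformulation of Theorem~\ref{t.main-cont-functions} (without giving a separate proof), relying on the preceding discussion that $\fix\mathscr{T}_C=\ker A$, $\fix\mathscr{T}_C'=\ker A'$, and that the $\beta_0$-closure of a convex set equals its $\sigma(C_b(\Omega),\cM(\Omega))$-closure. You have simply spelled out in detail the passage from $B_b(\Omega)$ to $C_b(\Omega)$ (via the Remark after Theorem~\ref{t.main-cont-functions}) and the identification of $\ker P$ with $\overline{\lh}^{\beta_0}(\id-\mathscr{T}_C)$ that the paper leaves implicit.
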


\begin{rem}\label{r.convto0}
	Assume that $\ker A =\{0\}$. By Remark \ref{r.converse-separation}, $\ker A$ separates $\ker A'$, so also $\ker A' =\{0\}$.
	By Theorem~\ref{t.main-cont-functions} it thus follows that (a) and (b) of Corollary~\ref{c.main} hold with $P=0$.
\end{rem}

\section{Doob's theorem revisited} \label{s.doobs-theorem}

In this section we return to the situation where $(\Omega, \Sigma)$ is any measurable space with a countably generated $\sigma$-algebra and generalize the convergence theorem of Doob mentioned in the introduction.

\begin{thm} \label{t.doob-reloaded}
	Let $(\Omega, \Sigma)$ be a measurable space whose $\sigma$-algebra  is countably generated. Let $\mathscr{S} = (S_t)_{t \in (0,\infty)}$ be a bounded semigroup of positive kernel operators on $\mathscr{M}(\Omega)$ and denote the kernel associated with $S_t$ by $k_t$. Assume that $\mu$ is a positive, finite, $\mathscr{S}$-invariant measure, i.e., $S_t\mu = \mu$ for all $t>0$, and that for some time $t_0>0$ and every $x\in \Omega$ the measure $k_{t_0}(x, \cdot)$ is absolutely continuous with respect to $\mu$.
	
	Then, for every $\nu \in \mathscr{M}(\Omega)$, $S_t\nu$ converges in total variation norm as $t\to \infty$.  
\end{thm}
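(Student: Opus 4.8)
The plan is to reduce Theorem~\ref{t.doob-reloaded} to the Tauberian Theorem~\ref{t.main-bdd-functions} by showing that the existence of the invariant measure $\mu$ forces the mean-ergodicity condition~(iii) of that theorem to hold. To apply Theorem~\ref{t.main-bdd-functions} we first need a semigroup of kernel operators on $B_b(\Omega)$; since each $S_t$ is a positive kernel operator on $\cM(\Omega)$, its restriction-of-adjoint $T_t := S_t'$ is a positive kernel operator on $B_b(\Omega)$ with the same kernel $k_t$, and $(T_t)_{t>0}$ is a bounded semigroup. The absolute-continuity hypothesis on $k_{t_0}(x,\argument)$ is exactly the standing assumption of Theorem~\ref{t.main-bdd-functions}, so it remains only to verify that $\fix\cS$ separates $\fix\cT$.

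The key step is the following claim: \emph{under the stated hypotheses, $\fix\cT$ is one-dimensional, spanned by the constant function $\one$, and $\mu$ (being $\cS$-invariant) separates it.} That $\one \in \fix\cT$ and $\mu\in\fix\cS$ is immediate once one notes $\langle\mu,\one\rangle = \mu(\Omega) > 0$, which gives the separation as soon as $\dim\fix\cT = 1$. To prove $\dim\fix\cT = 1$, I would argue on the $L^\infty$-side: pass to the band $\{\mu\}^{\perp\perp}\cong L^1(\Omega,\mu)$ as in Lemma~\ref{l.amcompact}. Since $\mu$ is invariant, the semigroup $\cS$ restricts to a bounded positive semigroup on this band, i.e.\ to a bounded positive semigroup on $L^1(\mu)$ that fixes the (a.e.\ strictly positive, after shrinking $\Omega$ to the support of $\mu$) density $\one$; by duality it induces a semigroup on $L^\infty(\mu)$. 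The absolute-continuity assumption, together with $\Sigma$ being countably generated, makes $S_{t_0}$ restricted to the band an integral operator (again Lemma~\ref{l.amcompact}(c)), hence AM-compact, and $\mu$ is a quasi-interior fixed point; Theorem~\ref{t.gg19} then yields strong convergence of $S_t|_{\{\mu\}^{\perp\perp}}$ to a projection $P_0$. Irreducibility is not assumed, so $\fix(\cS|_{\{\mu\}^{\perp\perp}})$ need not be one-dimensional — but every fixed point of $\cT$, i.e.\ every $f\in B_b(\Omega)$ with $T_{t_0}f = f$, lies in the range of $S_{t_0}^*$ and hence is determined by its class in $L^\infty(\mu)$ (Lemma~\ref{l.amcompact}(b)); so $\fix\cT$ embeds into $\fix(\cS|_{\{\mu\}^{\perp\perp}})^* $, and the separation $\fix\cS \ni \mu$ versus $\fix\cT$ can be checked there. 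Concretely: given $0\neq f\in\fix\cT$, using $S_{t_0}^*$-invariance we may write $f$ as a bounded function supported, up to a $\mu$-null set, on $\operatorname{supp}\mu$; then some $\cS|_{\{\mu\}^{\perp\perp}}$-fixed measure $\nu\in\{\mu\}^{\perp\perp}\subseteq\cM(\Omega)$ pairs nontrivially with $f$, and $\nu\in\fix\cS$. This shows $\fix\cS$ separates $\fix\cT$, and Theorem~\ref{t.main-bdd-functions}(iii)$\Rightarrow$(ii) gives total-variation convergence of $S_t\nu$ for every $\nu\in\cM(\Omega)$.

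The main obstacle is the absence of irreducibility: in classical Doob one gets a one-dimensional fixed space for free, whereas here $\fix\cS$ can be large, so the argument must genuinely produce, for \emph{each} nonzero $f\in\fix\cT$, a separating fixed \emph{measure} rather than just a separating functional in $\cM(\Omega)^*$. The device that makes this work is Lemma~\ref{l.amcompact}(b)--(c): it simultaneously tells us that fixed functionals of $\cS^*$ are already in $B_b(\Omega)$ (so $\fix\cS^* = \fix\cT$) and that the relevant operator is AM-compact, which is exactly what lets one descend to $L^1(\mu)$, apply the Banach-lattice convergence machinery there, and lift a separating fixed measure back up. One must be slightly careful that the separating measure obtained in $\{\mu\}^{\perp\perp}$ is genuinely $\cS$-invariant and not merely $S_{t_0}$-invariant; this follows by averaging with a shift-invariant mean as in the proof of Theorem~\ref{t.conv0} (or by noting the fixed spaces of $S_{t_0}|_{\{\mu\}^{\perp\perp}}$ and of the whole restricted semigroup coincide once strong convergence is known). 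With that in hand the proof closes by a direct invocation of Theorem~\ref{t.main-bdd-functions}.
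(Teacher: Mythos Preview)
Your proposal contains the paper's proof as a subroutine and then wraps it in an unnecessary detour. The decisive step you describe---restrict $\cS$ to the band $\{\mu\}^{\perp\perp}$, observe that $\mu$ is a quasi-interior fixed point there, use Lemma~\ref{l.amcompact}(c) to see that $S_{t_0}|_{\{\mu\}^{\perp\perp}}$ is AM-compact, and invoke Theorem~\ref{t.gg19}---is exactly what the paper does. But the paper then concludes \emph{immediately}: by Lemma~\ref{l.amcompact}(a), $S_{t_0}$ maps all of $\cM(\Omega)$ into $\{\mu\}^{\perp\perp}$, so for any $\nu\in\cM(\Omega)$ and any $t\ge t_0$ one has $S_t\nu = S_{t-t_0}(S_{t_0}\nu)$ with $S_{t_0}\nu$ already in the band; strong convergence on the band therefore gives convergence for every $\nu$. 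There is no need to pass back through Theorem~\ref{t.main-bdd-functions}, manufacture separating fixed measures, or worry about matching fixed spaces of $\cS$ and $\cS^*$.

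Two of your intermediate claims are actually false and should be dropped rather than patched. First, $\one$ need not lie in $\fix\cT$: the semigroup is only assumed bounded, not Markovian, so $T_t\one(x)=k_t(x,\Omega)$ has no reason to equal $1$. Second, $\fix\cT$ need not be one-dimensional, as you yourself note a few lines later; the theorem explicitly allows an arbitrary-dimensional fixed space (see the remark following it in the paper). Your eventual separation argument---use convergence on the band to get a projection $P_0$, then for $0\neq f\in\fix\cT$ find $\lambda$ with $\langle P_0^*f,\lambda\rangle\neq 0$ so that $P_0\lambda\in\fix\cS$ separates $f$---is salvageable, but once you have convergence on the band the theorem is already proved by the two-line observation above, so this machinery buys nothing.
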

\begin{proof}
	Since $\mu$ is a fixed point of $\cS$, the band $\{\mu\}^{\perp\perp}$ is invariant under $\cS$, and according to Lemma~\ref{l.amcompact}(a) the operator $S_{t_0}$ maps $\mathscr{M}(\Omega)$ into the band $\{\mu\}^{\bot\bot}$. So it suffices to show that the restriction of $\mathscr{S}$ to $\{\mu\}^{\bot\bot}$ converges strongly.
	
	The restriction of $S_{t_0}$ to $\{\mu\}^{\bot\bot}$ is AM-compact by Lemma~\ref{l.amcompact}(c), and the vector $\mu$ is a quasi-interior point in the Banach lattice $\{\mu\}^{\bot\bot}$. Hence, it follows from Theorem~\ref{t.gg19} that the restriction of $\mathscr{S}$ to $\{\mu\}^{\bot\bot}$ indeed converges strongly as time tends to infinity.
\end{proof}

Note that the fixed space of the semigroup $\cS$ in Theorem~\ref{t.doob-reloaded} can be of arbitrary dimension. If, however, the measures $k_{t_0}(x,\argument)$ are mutually absolutely continuous, then $\fix \mathscr{S}$ is one-dimensional. This is discussed in more detail in the following corollary. Two positive measures $\nu_1$ and $\nu_2$ are called \emph{equivalent} if $\nu_1$ is absolutely continuous with respect to $\nu_2$ and vice versa.

\begin{cor}[Doob]
	\label{c.doob-reloaded}
	Let $(\Omega, \Sigma)$ be a measurable space whose $\sigma$-algebra is countably generated. Let $\mathscr{S} = (S_t)_{t \in (0,\infty)}$ be a bounded semigroup of positive kernel operators on $\mathscr{M}(\Omega)$ and denote the kernel associated with $S_t$ by $k_t$. Assume that $\mathscr{S}$ possesses an (a priori not necessarily positive), finite, invariant measure $\mu \not= 0$ and that for some time $t_0>0$ and every $x\in \Omega$ all the measures $k_{t_0}(x,\argument)$ are mutually equivalent.
	Then $\fix \mathscr{S}$ is one-dimensional (thus, spanned by $\mu$) and, for every $\nu \in \mathscr{M}(\Omega)$, $S_t\nu$ converges in total variation norm to a (possibly zero) multiple of $\mu$ as $t\to \infty$.
\end{cor}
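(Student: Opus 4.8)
The plan is to read off the convergence statement from Theorem~\ref{t.doob-reloaded} and then to identify the limit by showing that $\fix\mathscr{S}$ is one-dimensional.

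First I would verify the hypotheses of Theorem~\ref{t.doob-reloaded}: it requires $k_{t_0}(x,\argument)\ll\mu$ for \emph{every} $x\in\Omega$, whereas the corollary only assumes that the $k_{t_0}(x,\argument)$ are mutually equivalent and that $\mu$ is $\mathscr{S}$-invariant. These two assumptions already force absolute continuity: invariance gives $\mu(A)=(S_{t_0}\mu)(A)=\int_\Omega k_{t_0}(x,A)\dx\mu(x)$ for every $A\in\Sigma$, so if $\mu(A)=0$ then, since $k_{t_0}(\argument,A)\ge 0$ and $\mu\neq 0$, we have $k_{t_0}(x_0,A)=0$ for some $x_0$, and mutual equivalence then gives $k_{t_0}(x,A)=0$ for all $x$. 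Hence each $k_{t_0}(x,\argument)$ is absolutely continuous with respect to $\mu$ --- running this argument backwards, it is even equivalent to $\mu$, which we use below --- and Theorem~\ref{t.doob-reloaded} yields that $S_t\nu$ converges in total variation norm for every $\nu\in\cM(\Omega)$.

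Since the limit of an orbit $S_t\nu$ is a fixed point of $\mathscr{S}$, it suffices to prove $\fix\mathscr{S}=\R\mu$. For this I would pass to the band $\{\mu\}^{\perp\perp}$, which is $\mathscr{S}$-invariant (because $\mu$ is fixed) and contains $\fix\mathscr{S}$ (because $S_{t_0}$ maps $\cM(\Omega)$ into $\{\mu\}^{\perp\perp}$ by Lemma~\ref{l.amcompact}(a)). Under the identification $\{\mu\}^{\perp\perp}\cong L^1(\Omega,\mu)$ of Lemma~\ref{l.amcompact} the restriction of $S_{t_0}$ becomes an integral operator whose kernel, by the equivalence $k_{t_0}(x,\argument)\sim\mu$, is strictly positive $\mu\otimes\mu$-almost everywhere; such an operator is irreducible, and its powers converge strongly, being a cofinal subnet of the convergent net $(S_t|_{\{\mu\}^{\perp\perp}})_{t}$ from Theorem~\ref{t.doob-reloaded}. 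Classical Perron--Frobenius theory for irreducible positive operators with strongly convergent powers then gives that the fixed space is one-dimensional; since $\mu$ is a nonzero fixed point, $\fix\mathscr{S}=\R\mu$, and every orbit $S_t\nu$ converges to a multiple of $\mu$.

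The main obstacle is this irreducibility step, since $\mathscr{S}$ is not assumed (sub-)Markovian and the usual Markov-chain argument does not apply verbatim. A self-contained route is as follows. First, the limit projection $P=\lim_{t\to\infty}S_t$ restricted to $\{\mu\}^{\perp\perp}$ is strictly positive: using weak convergence of the dual semigroup $\mathscr{S}'=(S_t')_t$ on $B_b(\Omega)$ (a consequence of Theorem~\ref{t.doob-reloaded}) together with the fact that $S_{t_0}'$ maps any nonnegative function whose positivity set has positive $\mu$-measure to an everywhere strictly positive function, one produces a fixed function $f_0$ of $\mathscr{S}'$ that is strictly positive everywhere on $\Omega$; since $\langle S_t\nu,f_0\rangle=\langle\nu,S_t'f_0\rangle=\langle\nu,f_0\rangle$ is constant in $t$, a nonzero positive measure $\nu$ in the band cannot satisfy $P\nu=0$, and then, as $P=S_{t_0}^2P$ with $S_{t_0}^2$ strictly positive on the band, $P\nu$ is strictly positive $\mu$-a.e. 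Finally, the telescoping identity $S_{t_0}^n\nu^+=\nu^+ + \sum_{k=0}^{n-1}S_{t_0}^k\bigl(S_{t_0}\nu^+-\nu^+\bigr)$, together with $S_{t_0}^n\nu^+\to P\nu^+$, shows that for a sign-changing $\nu\in\fix\mathscr{S}$ the measure $P\nu^+-\nu^+$ is a nonzero positive element of $\ker P$, a contradiction; hence $\fix\mathscr{S}$ contains no sign-changing element and is one-dimensional.
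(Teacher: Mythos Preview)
Your reduction to Theorem~\ref{t.doob-reloaded} is right in spirit but has a small gap: the corollary does not assume $\mu\ge 0$, and your implication ``$\int_\Omega k_{t_0}(x,A)\,d\mu(x)=0$ with $k_{t_0}(\cdot,A)\ge 0$ and $\mu\ne 0$ forces $k_{t_0}(x_0,A)=0$ for some $x_0$'' fails for signed $\mu$. The paper fixes this in one line by first invoking Proposition~\ref{p.ex-of-positive-fixed-point} to obtain a \emph{positive} non-zero invariant measure $\hat\mu$, after which your argument (and the rest of the proof) goes through with $\hat\mu$ in place of $\mu$.

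For the one-dimensionality of $\fix\mathscr{S}$ the paper takes a shorter and rather different route than either of yours. It observes that $P\cM(\Omega)=\fix\mathscr{S}$ is itself a Banach lattice for the order inherited from $\cM(\Omega)$ (by \cite[Proposition~III.11.5]{schaefer1974}), and that for every non-zero $0\le\lambda\in\cM(\Omega)$ the measure $S_{t_0}\lambda$ is equivalent to each $k_{t_0}(x,\cdot)$; hence any two non-zero positive fixed points are equivalent measures, generate the same closed ideal in $\cM(\Omega)$ and therefore also in $P\cM(\Omega)$, so every non-zero positive element of $P\cM(\Omega)$ is quasi-interior there --- which forces $\dim P\cM(\Omega)=1$. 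This avoids both the black-box Perron--Frobenius citation and the detour through a strictly positive dual fixed function.

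Your self-contained route does work, but it can be streamlined and needs one clarification. The telescoping identity is unnecessary: $S_{t_0}\nu^+\ge(S_{t_0}\nu)^+=\nu^+$ already gives $P\nu^+\ge\nu^+$ directly. And you should say why $P\nu^+-\nu^+\neq 0$ for a sign-changing $\nu\in\fix\mathscr{S}$: if it were zero then $\nu^+$ and $\nu^-=\nu^+-\nu$ would both lie in $\fix\mathscr{S}$, hence both be equivalent to $\hat\mu$ by your strict-positivity step, contradicting $\nu^+\perp\nu^-$. With these two points addressed your alternative argument is complete.
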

\begin{proof}
	According to Lemma~\ref{l.ex-of-positive-fixed-point} there exists a positive, non-zero $\cS$-invariant measure $0 \le \hat \mu \in \cM(\Omega)$. If $\hat \mu(A) =0$, then Formula~\eqref{eq:kernel-operator-on-measures} and the identity $S_{t_0}\hat \mu = \hat \mu$ yield $k_{t_0}(x, A)=0$ for $\hat \mu$-almost every $x\in \Omega$. But since all measures $k_{t_0}(x, \cdot)$ are mutually equivalent, it follows that $k_{t_0}(x, A) =0$ for all $x\in \Omega$. Thus all measures $k_{t_0}(x, \cdot)$ are absolutely continuous with respect to $\hat \mu$ and Theorem~\ref{t.doob-reloaded} shows that, for each $\nu \in \mathscr{M}(\Omega)$, $S_t\nu$ converges in total variation norm as $t \to \infty$.
	
	Let $P$ denote the strong limit of $S_t$ as $t \to \infty$. Then $P$ is a positive projection onto $\fix \mathscr{S}$, and it only remains to show that $P$ has rank $1$.
	
	To this end, note that the range $P\mathscr{M}(\Omega)$ of $P$ is itself a Banach lattice with respect to the order inherited from $\mathscr{M}(\Omega)$ (see \cite[Prop.\ III.11.5]{schaefer1974}). Of course, $\hat \mu$ is an element $P\mathscr{M}(\Omega)$; let $\nu$ be another positive non-zero measure in this range. As above, it follows from formula~\eqref{eq:kernel-operator-on-measures} that, the measure $\nu =S_{t_0}\nu$ is equivalent to $k_{t_0}(x,\argument)$ for each $x \in \Omega$. 
	In particular, the measures $\hat \mu$ and $\nu$ are equivalent. Consequently, they generate the same closed ideal in $\mathscr{M}(\Omega)$, and hence they also generate the same closed ideal in the Banach lattice $P\mathscr{M}(\Omega)$. This shows that every non-zero positive element of $P\mathscr{M}(\Omega)$ is actually a quasi-interior point within this space. Hence, $P\mathscr{M}(\Omega)$ is one-dimensional.
\end{proof}

The history of Doob's theorem has already beeen discussed in the introduction. Here, it is worth adding that for a long time merely the classical setting in which $\Omega$ is a Polish space, endowed with its Borel $\sigma$-algebra, was studied. Only recently -- in \cite{kulik_scheutzow2015} -- the case of an arbitrary measure space with countably generated $\sigma$-algebra was addressed. It is also worthwhile to note that the authors of \cite{kulik_scheutzow2015} actually established a version of Doob's theorem in the time-discrete setting and then obtained the time-continuous result as a corollary (see \cite[Sect.\ 2]{gerlach_glueck2018} for a general account on how to pass from discrete to continuous time). For the more general result in Theorem \ref{t.doob-reloaded}, which appears to be new, a time-discrete analogue cannot hold. This can already be seen in the following simple example: 

Let $\Omega \coloneqq \{1,2\}$ be endowed with the discrete $\sigma$-algebra. For the Markov operator
\[ S\coloneqq
\begin{pmatrix}
	0 & 1 \\
	1 & 0
\end{pmatrix}
\]
on $\mathscr{M}(\Omega)$ all the assumptions of Theorem~\ref{t.doob-reloaded} are satisfied for the measure
$\mu(\{1\}) \coloneqq \mu(\{2\}) \coloneqq \frac{1}{2}$ and the discrete semigroup $(S^n)_{n\in\N}$. However, this semigroup is not strongly convergent.

\section{Applications} \label{s.applications}

Now the abstract results from the previous sections will be applied to various semigroups associated with (systems of) partial differential equations and information about their asymptotic behavior will be obtained. Subsection~\ref{s.appl-whole-space} starts with paraboilic equations on $\R^d$ with possibly unbounded coefficients. This can, by now, be considered a `classical topic', initiated in \cite{metafune_pallara_wacker2002}, where also the asymptotic behavior of the associated semigroup was addressed. However, 
in \cite{metafune_pallara_wacker2002}, the authors limited themselves to the situation of the classical Theorem of Doob whereas we can characterize the asymptotic behavior in full detail (see
Theorem~\ref{t.appl-whole-space}). 

If, instead of all of $\R^d$, we consider the same equations on an unbounded domain and subject to Dirichlet boundary conditions, then much less is known about the asymptotic behavior. From a heuristic point of view, one would expect the semigroup to converge -- in one way or another -- towards zero. However, to the best of our knowledge, the results in Subsection \ref{s.appl-unbdd-domains} are the first in this direction.

The remaining Subsections~\ref{s.appl-irred-systems} and~\ref{s.appl-red-systems} are devoted to systems of parabolic equations on $\R^d$ that are coupled by matrix-valued potentials. As mentioned in the introduction, these equations illustrate the fact that our results can be applied to bounded, rather than contractive, semigroups. Systems of this form were considered in \cite{addona_angiuli_lorenzi2019}. To tackle the asymptotic behavior of such systems though, the authors of \cite{addona_angiuli_lorenzi2019} made additional regularity and growth assumptions. An application of our abstract results shows that the assumptions imposed to show well-posedness of the equation, i.e.\ existence of the semigroup, are already sufficient to treat the asymptotic behavior -- no additional assumptions are needed (Subsection~\ref{s.appl-irred-systems}). Moreover, our results can be generalized to the non-irreducible setting without additional effort (Subsection \ref{s.appl-red-systems}).

\subsection{Parabolic equations on the whole space \texorpdfstring{$\R^d$}{}} \label{s.appl-whole-space}

In this section, we revisit second order differential operators in non-divergence form on all of $\R^d$ which were studied first in the seminal paper \cite{metafune_pallara_wacker2002}. These operators are formally given by

\begin{equation}
	\label{eq.operator}
	Au = \sum_{i,j=1}^d a_{ij}D_{ij}u + \sum_{j=1}^d b_jD_ju.
\end{equation}
As  in \cite{metafune_pallara_wacker2002}, the following assumptions on the coefficients will be imposed in what follows.

\begin{hyp}\label{hyp1}
	For some $\alpha>0$ the real-valued coefficients $a_{ij}, b_j$ are locally $\alpha$-H\"older continuous for all $i,j=1, \ldots, d$. The diffusion coefficients are assumed to be symmetric ($a_{ij}=a_{ji}$) and to satisfy the ellipticity condition
	\[
		\sum_{i,j=1}^d a_{ij}(x) \xi_i\xi_j \geq \eta(x) |\xi|^2
	\]
	for all $x\in \R^d$ and $\xi\in \R^d$, where $\eta : \R^d\to (0,\infty)$ is such that $\inf_K\eta >0$ for every compact set $K\subseteq \R^d$.
\end{hyp}

It may happen that $\inf_{\R^d}\eta =0$ or that the coefficients are unbounded. In order to define a realization of the differential operator \eqref{eq.operator} on the space $C_b(\R^d)$, a suitable domain is needed. Set
\[
	D_{\mathrm{max}} \coloneqq \Big\{ u\in C_b(\R^d)\cap\bigcap_{1<p<\infty} W^{2,p}_{\mathrm{loc}}(\R^d) : Au \in C_b(\R^d)\Big\}.
\]
As it turns out, it may happen that the elliptic equation $\lambda u - A u = f$ is not well-posed on $D_{\mathrm{max}}$, as the solution may not be unique; see \cite[Sect.\ 7]{metafune_pallara_wacker2002} for examples. Nevertheless, it is proved 
in \cite[Sect.\ 3]{metafune_pallara_wacker2002} that, whenever $f\geq 0$, there is a \emph{minimal} solution of this equation.

\begin{prop}\label{p.elliptic}
	There exists a subspace $\hat D$ of $D_\mathrm{max}$ such that the realization of $A$ on $\hat D$, which is denoted by 
$\hat A$ in what follows, has the following properties:
	\begin{enumerate}[(a)]
		\item $\hat A$ is a closed operator with $(0,\infty) \subseteq \rho (\hat A)$.
		\item For every $\lambda >0$ the resolvent $R(\lambda, \hat A)$ is a positive contraction on $C_b(\R^d)$.
		\item For every $\lambda >0$ and $f\geq 0$ the function $R(\lambda, \hat A)f$ is the minimal solution of the equation $\lambda u - Au = f$ in $D_\mathrm{max}$.
	\end{enumerate}
\end{prop}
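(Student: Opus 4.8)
The plan is to follow the construction in \cite[Section~3]{metafune_pallara_wacker2002}: the operator $\hat A$ will be defined through its resolvent, which in turn is produced as a monotone limit of the Dirichlet resolvents associated with $A$ on an exhausting sequence of balls. To begin, fix $\lambda>0$ and take $0\le f\in C_b(\R^d)$, and for $n\in\N$ let $B_n$ be the open ball of radius $n$ centred at the origin. On $\overline{B_n}$ the coefficients $a_{ij},b_j$ are H\"older continuous (in particular bounded), and by Hypothesis~\ref{hyp1} we have $\inf_{\overline{B_n}}\eta>0$, so $A$ is uniformly elliptic on $B_n$. Classical $L^p$-theory for the Dirichlet problem on the smooth domain $B_n$ then yields a unique $u_n\in C(\overline{B_n})\cap\bigcap_{1<p<\infty}W^{2,p}(B_n)$ with $\lambda u_n-Au_n=f$ in $B_n$ and $u_n=0$ on $\partial B_n$. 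The maximum principle for strong solutions gives $0\le u_n\le\lambda^{-1}\|f\|_\infty$, and since $u_{n+1}$ and $u_n$ solve the same equation on $B_n$ while $u_{n+1}\ge 0=u_n$ on $\partial B_n$, we get $u_n\le u_{n+1}$ there; hence $(u_n)$ increases pointwise to a bounded function $u$.

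Next I pass to the limit $n\to\infty$. For each ball $B_m$ and each $p\in(1,\infty)$, interior $L^p$-estimates bound $u_n$ in $W^{2,p}(B_m)$ uniformly in $n>m+1$; choosing $p>d$ and combining reflexivity with the compact embedding $W^{2,p}(B_{m+1})\hookrightarrow C^1(\overline{B_m})$ gives, along the whole sequence (which already converges pointwise), $u_n\rightharpoonup u$ in $W^{2,p}_{\mathrm{loc}}(\R^d)$ and $u_n\to u$ in $C^1_{\mathrm{loc}}(\R^d)$. Letting $n\to\infty$ in $\lambda u_n-Au_n=f$ yields $\lambda u-Au=f$ almost everywhere, so $Au=\lambda u-f\in C_b(\R^d)$ and $u\in\bigcap_{1<p<\infty}W^{2,p}_{\mathrm{loc}}(\R^d)$, i.e.\ $u\in D_{\mathrm{max}}$. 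Set $R_\lambda f:=u$. Minimality is built into the construction: if $0\le v\in D_{\mathrm{max}}$ also solves $\lambda v-Av=f$, then $v\ge u_n$ on $B_n$ for all $n$ by the maximum principle (since $v\ge 0=u_n$ on $\partial B_n$), whence $v\ge R_\lambda f$.

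It remains to assemble $\hat A$. Extending $R_\lambda$ to all of $C_b(\R^d)$ via $R_\lambda f:=R_\lambda f^+-R_\lambda f^-$ produces a positive linear operator with $\|\lambda R_\lambda\|\le 1$, which is assertion~(b). The Dirichlet resolvents $R_\lambda^{(n)}$ satisfy the resolvent identity on $B_n$, and using positivity, monotonicity in $n$, and dominated convergence for their Green-kernel representations one transfers this identity to $(R_\lambda)_{\lambda>0}$ in the pointwise limit; thus $(R_\lambda)_{\lambda>0}$ is a pseudoresolvent (equivalently, one could first build the transition semigroup as the monotone limit of the Dirichlet heat semigroups on the $B_n$ and take its Laplace transform). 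This pseudoresolvent is injective: if $R_\lambda f=0$, then since $R_\lambda f\in D_{\mathrm{max}}$ solves $\lambda R_\lambda f-AR_\lambda f=f$ we obtain $f=0$. An injective pseudoresolvent defined on all of $(0,\infty)$ is the resolvent of a unique closed operator; calling it $\hat A$, its domain $\hat D:=R_\lambda(C_b(\R^d))$ is independent of $\lambda$, is contained in $D_{\mathrm{max}}$ by the previous step, and satisfies $R(\lambda,\hat A)=R_\lambda$ for all $\lambda>0$, which gives (a) and, together with the minimality shown above, (c). The passage from the pseudoresolvent to $\hat A$ is soft; the real work — and the step I expect to be the main obstacle — is the elliptic analysis: solvability of the auxiliary Dirichlet problems under only local H\"older regularity of the coefficients, the uniform interior $L^p$-estimates that legitimize the limit $n\to\infty$ and place $u$ in $D_{\mathrm{max}}$, and the monotone limiting argument used to propagate the resolvent identity.
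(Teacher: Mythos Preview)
Your proposal is correct and follows precisely the construction from \cite[Section~3]{metafune_pallara_wacker2002}, which is exactly what the paper's own proof does: the paper's proof consists solely of a citation to \cite[Theorem~3.4 and Proposition~3.6]{metafune_pallara_wacker2002}. You have supplied the details (Dirichlet approximation on balls, monotone convergence via the maximum principle, interior $L^p$-estimates, and the passage from the resulting injective pseudoresolvent to $\hat A$) that the paper chooses to leave to the reference.
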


\begin{proof}
	See \cite[Thm.\ 3.4 and Prop.\ 3.6]{metafune_pallara_wacker2002}, and also \cite[Sect.\ I.1.1]{lorenzi2017}.
\end{proof}

As it turns out, the operator $\hat A$ is the generator of a semigroup $\mathscr{T}$:

\begin{prop}\label{p.parabolic}
	The operator $\hat A$ is the generator of a positive, strongly Feller contraction semigroup $\mathscr{T} = (T_t)_{t \in (0,\infty)}$ on the space $C_b(\R^d)$. Moreover, there is a measurable function $p: (0,\infty)\times \R^d\times \R^d \to (0,\infty)$ such that
	\begin{equation}\label{eq.representation}
		(T_tf)(x) = \int_{\R^d} p(t,x,y) f(y)\, dy
	\end{equation}
	for every $x \in \R^d$ and $f\in C_b(\R^d)$.
\end{prop}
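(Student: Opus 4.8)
\textbf{Proof plan for Proposition~\ref{p.parabolic}.}

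The plan is to construct the semigroup as the pointwise limit of the semigroups $(T_t^n)$ generated by suitable ``Dirichlet realizations'' of $A$ on a sequence of bounded domains $B_n = B(0,n)$ exhausting $\R^d$, and to obtain the density $p$ via a parallel compactness argument for the associated fundamental solutions. First I would invoke the classical theory on bounded domains: for each $n$, the operator $A$ with Dirichlet boundary conditions on $B_n$ (with coefficients restricted to $\overline{B_n}$, where by Hypothesis~\ref{hyp1} they are bounded and H\"older continuous, and uniformly elliptic) generates an analytic, positive, contraction semigroup $(T_t^n)$ on $C_0(B_n)$, and there is a jointly continuous, strictly positive heat kernel $p_n(t,x,y)$ on $(0,\infty)\times B_n\times B_n$ with $T_t^n f(x) = \int_{B_n} p_n(t,x,y)f(y)\dx y$. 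Extending functions on $B_n$ by zero, the positivity of the coefficients' structure and the parabolic maximum principle give the monotonicity $0 \le p_n(t,x,y) \le p_{n+1}(t,x,y)$ for $x,y\in B_n$ and $\int_{B_n} p_n(t,x,y)\dx y \le 1$. Hence $p(t,x,y) \coloneqq \lim_{n\to\infty} p_n(t,x,y)$ exists, is measurable, and satisfies $\int_{\R^d} p(t,x,y)\dx y \le 1$; defining $T_t f(x) \coloneqq \int_{\R^d} p(t,x,y)f(y)\dx y$ yields formula~\eqref{eq.representation}, positivity, and the contraction property on $C_b(\R^d)$.

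Next I would verify the semigroup law and the continuity properties needed to identify the generator. The Chapman--Kolmogorov identity $p_n(t+s,x,y) = \int_{B_n} p_n(t,x,z)p_n(s,z,y)\dx z$ passes to the limit by monotone convergence, giving $T_{t+s} = T_tT_s$; one also checks $T_t C_b(\R^d)\subseteq C_b(\R^d)$ using interior parabolic Schauder estimates (the $p_n$ converge locally uniformly to a function that solves the parabolic equation in the interior, hence is continuous in $x$). To connect this semigroup with $\hat A$, I would compute its Laplace transform: for $\lambda>0$ and $0\le f\in C_b(\R^d)$, the functions $R_n(\lambda) f \coloneqq \int_0^\infty e^{-\lambda t} T_t^n f \dx t$ are the resolvents $R(\lambda, A_n)f$ of the Dirichlet realizations $A_n$, and these increase to $R(\lambda,\hat A)f$ --- this is precisely the construction of the minimal solution in \cite[Section~3]{metafune_pallara_wacker2002} recorded in Proposition~\ref{p.elliptic}(c). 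By monotone convergence the Laplace transform of $(T_t)$ equals $R(\lambda,\hat A)$, so by the uniqueness of the Laplace transform and the definition of the generator (as the operator whose resolvent is the Laplace transform of the semigroup, in the sense discussed before Corollary~\ref{c.main}), $\hat A$ generates $\mathscr T$.

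Finally, there remains the claim that $p$ can be taken \emph{strictly} positive, i.e.\ $p(t,x,y)>0$ for all $t>0$ and all $x,y\in\R^d$. This follows from the strict positivity of each $p_n$ on its domain together with the monotonicity: given $t>0$ and $x,y\in\R^d$, pick $n$ large enough that $x,y\in B_n$, and then $p(t,x,y)\ge p_n(t,x,y)>0$ by the strong parabolic maximum principle (Harnack inequality) applied on $B_n$, where the coefficients are bounded, H\"older, and uniformly elliptic. I expect the main obstacle to be the careful justification of the interchange of limits --- in particular showing that the monotone limit $p$ is genuinely the kernel of \emph{the} semigroup generated by $\hat A$ (rather than of some smaller semigroup), and that $T_t$ maps $C_b(\R^d)$ into itself with the correct continuity. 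Both points are handled by combining the Laplace-transform identification above with the standard interior regularity estimates for parabolic equations with locally H\"older coefficients; none of this requires new ideas beyond what is already available in \cite{metafune_pallara_wacker2002}, but the bookkeeping with the exhaustion $B_n$ and the zero-extensions must be done with some care.
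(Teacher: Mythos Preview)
Your plan is sound and is precisely the approximation-by-bounded-domains strategy carried out in \cite[Section~4]{metafune_pallara_wacker2002}; the paper itself does not prove this proposition but simply cites that reference (the semigroup is constructed in their Section~4 and the kernel representation is their Theorem~4.4). So you are reconstructing the cited proof rather than offering an alternative, and the details you sketch---monotone limit of the Dirichlet kernels $p_n$, Chapman--Kolmogorov via monotone convergence, identification of the Laplace transform with the minimal resolvent $R(\lambda,\hat A)$ from Proposition~\ref{p.elliptic}(c), and strict positivity via $p\ge p_n>0$---are exactly the right ones.
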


\begin{proof}
	See \cite[Sect.\ 4]{metafune_pallara_wacker2002} or \cite[Sect.\ I.1.2]{lorenzi2017}.
\end{proof}

The abstract main results of this article now yield the following description of the asymptotic behavior of the semigroup $T$ in terms of $\ker A$ and $\ker A'$.

\begin{thm} \label{t.appl-whole-space}
	For the semigroup $\cT = (T_t)_{t \in (0,\infty)}$ from Proposition~\ref{p.parabolic} exactly one of the following alternatives is true:
	\begin{enumerate}[\upshape (1)]
		\item
		One has $\ker A' \neq \{0\}$. \\
		In this case, there exists a non-zero invariant measure $0 \leq \mu \in \cM(\Omega)$ which is strictly positive in the sense that $\langle \mu, f \rangle > 0$ for each non-zero $0 \le f \in C_b(\R^d)$. One has $\hat D = D_{\max}$, $\ker A = \lh\{\one\}$, $\ker A' =\lh\{\mu\}$. Moreover,
		and
		\begin{align*}
								&  T_tf \to \langle f,\mu\rangle\cdot \one 	\quad \text{locally uniformly} \\
			\text{and} \quad 	&  T_t'\nu \to \langle \one, \nu\rangle\mu 	\quad \text{in total variation}
		\end{align*}
		as $t\to\infty$ for all $f\in C_b(\R^d)$ and $\nu \in \cM (\R^d)$.
		
		\item 
		One has $\ker A =\{0\}$. \\
		In this case,
		\begin{align*}
								&  T_tf \to 0 		\quad \text{locally uniformly} \\
			\text{and} \quad 	&  T_t'\nu \to 0 	\quad \text{ in total variation}
		\end{align*}
		for all $f\in C_b(\R^d)$ and $\nu \in \cM (\R^d)$ as $t\to\infty$.
		
		\item 
		The space $\ker A'$ does not separate $\ker A$. \\
		In this case, there exists $f\in C_b(\R^d)$ such that the orbit $(T_tf)$ does not converge pointwise,
		and there exists $\nu\in \cM(\R^d)$ such that the orbit $(T_t'\nu)$ does not converge in total variation norm.
	\end{enumerate}	
\end{thm}

\begin{proof}
	If~(2) and~(3) fail, then $\ker A'$ is clearly non-zero, i.e.\ (1) holds; this shows that at least one of the three assertions is satisfied. Next we show that the claims within each of the points~(1)--(3) are true.
	
	(1) Assume that (1) holds. Then there exists, due to Lemma~\ref{l.ex-of-positive-fixed-point}, a positive non-zero measure $\mu \in \ker A'$. Strict positivity of $\mu$ follows from strict positivity of the kernel function $p$ and from $\mu = T_1 \mu$. Moreover, Corollary \ref{c.doob-reloaded} (which is applicable due to the representation of the semigroup operators given in Proposition~\ref{p.parabolic}) yields $\dim \ker A' = 1$ and convergence of the orbits of $T'$. Convergence of the orbits of $T$ as well as $\dim \ker A = 1$ thus follow from Theorem \ref{t.main-bdd-functions}. 
	
	It only remains to show that $\one$ is a fixed vector of $\cT$; by \cite[Prop.\ 5.7]{metafune_pallara_wacker2002} this implies $\hat D = D_{\mathrm{max}}$. 
	Let $t \in (0,\infty)$. Then $T_t \one \le \one$ since $\cT$ is a contraction semigroup. 
	As $\langle \mu, \one - T_t \one \rangle = 0$, this yields $T_t \one = \one$ due to the strict positivity of $\mu$.
	
	(2) 
	The claimed convergence in case that $\ker A = \{0\}$ follows from Remark~\ref{r.convto0}. 
	
	(3) The claims in this case follow immediately from Theorem~\ref{t.main-cont-functions}.
	
	The properties that have just been shown to follow from~(1)--(3), imply that the three conditions are mutually exclusive.
\end{proof}

In the literature,  alternative (1) in Theorem \ref{t.appl-whole-space} is best studied and, indeed, this situation is already considered in the reference \cite{metafune_pallara_wacker2002}. Assuming that $\hat{D} = D_{\max}$ the authors of \cite{metafune_pallara_wacker2002} refer to a version of Doob's theorem in \cite[Thm.\ 4.2.1]{daprato_zabczyk1996} that yields pointwise convergence of each orbit of $\cT$ in $C_b(\R^d)$ whenever there exists an invariant probability measure.

The natural question is how to decide whether there exists an invariant probability measure. The result in \cite[Thm.\ 6.3]{metafune_pallara_wacker2002} provides a sufficient condition for this in terms of Lyapunov functions. This is certainly the most common tool to establish the existence of an invariant measure and \cite{metafune_pallara_wacker2002} contains many examples where this result can be applied.\smallskip

We thus turn our attention to the possibilities (2) and (3) of Theorem \ref{t.appl-whole-space} which, to the best of our knowledge, have not been studied systematically in the literature. As for case (2), making use of the results of \cite{metafune_pallara_wacker2002}, we can give a large class of examples where we are in this situation. Indeed, \cite[Thm.\ 3.12]{metafune_pallara_wacker2002} gives a sufficient condition (again in terms of a Lyapunov function) for $\hat D = D_{\mathrm{max}}\cap C_0(\R^d)$ ($=D(A)$ in the notation of \cite{metafune_pallara_wacker2002}). Thus, in this situation $\ker A \subset C_0(\R^d)$ and the maximum principle yield $\ker A=\{0\}$.

\begin{example}
	Consider the operator $Au = \Delta u + \sum_{j=1}^d b_j D_j u$, where the $b_j$ are the functions from Hypothesis~\ref{hyp1}. If 
	\[
		\lim_{|x|\to \infty} \frac{\sum_{j=1}^d b_j (x) x_j}{|x|^2(\log |x|)^{\delta+1}} = \infty
	\]
	for some $\delta >0$, then $\ker A = \{0\}$ and assertion (2) in Theorem \ref{t.appl-whole-space} holds. Indeed, \cite[Cor.\ 3.12]{metafune_pallara_wacker2002} yields $\hat D \subset C_0(\R^d)$ and by the above, $\ker A =\{0\}$ follows.
\end{example}

Assertion (3) of Theorem \ref{t.appl-whole-space} holds in the classical situation of the Laplacian $A=\Delta$. Indeed, in this case $\ker A =\lh\{\one\}$ and $\ker A' = \{0\}$, as any invariant measure must be translation invariant and thus a multiple of the Lebesgue measure (which is infinite). It thus follows from Theorem~\ref{t.appl-whole-space} that there exists a function $f \in C_b(\R^d)$ such that $T_tf$ is not pointwise convergent as $t \to \infty$.
	
	Of course, in this special case much more is known; for instance, one can characterise those $f \in C_b(\R^d)$ for which the orbit $T_tf$ converges pointwise as $t \to \infty$. More information on this, and also on the long-term behavior of general parabolic equations with bounded coefficients, can be found in the survey article \cite{denisov2005}.
\smallskip

While $\dim \ker A' \leq 1$ in the situation of Theorem \ref{t.appl-whole-space}, $\ker A$ can  in fact be multi-dimensional;
 this is illustrated by the following simple example.

\begin{example}
	Let $d = 1$ and $b(x) = \frac{2x}{1+x^2}$ for each $x \in \R$. Consider the operator $A$ given by
	\begin{align*}
		Au = u'' + bu' 
	\end{align*}
As $b$ is bounded, $A$ is a bounded perturbation of the Laplacian whence $\hat D = D_{\max}$.	
The operator $A$ vanishes both on the function $\one$ and on the function $\arctan$, and both of them are contained in $D_{\max} = \hat D$. Hence, $\ker A$ is at least two-dimensional.
	
	Thus, assertion (3) of Theorem~\ref{t.appl-whole-space} holds in this example.
\end{example}

\subsection{Parabolic equations on unbounded domains with Dirichlet boundary conditions} 
\label{s.appl-unbdd-domains}

In this section, we consider second order, strictly elliptic operators with (possibly) unbounded coefficients on an unbounded domain $\Omega\subseteq \R^d$. Naturally, some sort of boundary conditions have to be imposed on $\partial\Omega$. If Neumann boundary conditions are used, then the constant $1$-function satisfies these boundary conditions and the situation is similar to that of the last section. So, instead, we will focus on Dirichlet boundary conditions. If the domain is bounded, then classical theory suggests that the semigroup generated by a second order elliptic operator with bounded coefficients and subject to Dirichlet boundary conditions should converge to 0. If $\Omega$ is unbounded, the situation is not so easy; this is the focus of this section.

Consider elliptic operators of form
\[
	Au = \sum_{i,j=1}^d a_{ij}D_{ij}u + \sum_{j=1}^d b_jD_j u,
\]
and subject to Dirichlet boundary conditions. Such operators were first studied in \cite{fornaro_metafune_priola2004}; see also \cite[Sect. I.12]{lorenzi2017}. 
We work on the Polish space $\overline{\Omega}$ -- and thus on the spaces $C_b(\overline{\Omega})$ and $\cM(\overline{\Omega})$ -- amd make the following assumptions similar to \cite{fornaro_metafune_priola2004}.

\begin{hyp}
	\label{hyp.domain}
	Let $\Omega\subseteq \R^d$ be an open and connected domain with $C^{2+\alpha}$-boundary where $0<\alpha<1$. Moreover, consider coefficients $a_{ij}, b_j : \overline{\Omega} \to \R$ that are locally $\alpha$-H\"older continuous. Assume 
	that the diffusion coefficients $a_{ij}$ are symmetric and uniformly elliptic in the sense that there is a constant $\eta>0$ such that
	\[
		\sum_{i,j=1}^d a_{ij}(x)\xi_i\xi_j\geq \eta|\xi|^2
	\]
	for all $x\in \overline{\Omega}$ and $\xi\in \R^d$. Finally, assume that there exists a function $V \in C^2(\overline{\Omega})$
	such that $\lim_{|x|\to \infty} V(x) =\infty$ and such that $(\lambda_0-A)V\geq 0$ for some $\lambda_0 \ge 0$.
\end{hyp}

Note that that in \cite{fornaro_metafune_priola2004}, the coefficients were additionally assumed to be differentiable. However, that assumption is only needed to establish gradient estimates for the semigroup which are not needed for our purposes. The existence of the function $V$ in Hypothesis \ref{hyp.domain} ensures that, in the notation of the last section, $D_{\max} = \hat D$. We thus endow the operator $A$ with the domain
\[
	D(A) \coloneqq \Big\{ u \in C_b(\overline{\Omega}) \cap \bigcap_{1<p<\infty} W^{2,p}_{\mathrm{loc}}(\Omega) :
	Au \in C_b(\overline{\Omega}), u(x) = 0 \mbox{ for } x\in \partial\Omega\Big\}.
\]
Here, $Au\in C_b(\overline{\Omega})$ means that there is a function $f\in C_b(\overline{\Omega})$ such that
$Au=f$ on $\Omega$.

\begin{prop}
	\label{p.domaingen}
	Under the above assumptions, the operator $A$ is the generator of positive, strongly Feller
	contraction semigroup $\cT = (T_t)_{t\in (0,\infty)}$
	on the space $C_b(\overline{\Omega})$. 
	There is a measurable function $p: (0,\infty)\times \overline{\Omega}\times \overline{\Omega} \to [0,\infty)$ such that
	\begin{equation}
		\label{eq.rep}
		(T_tf)(x) = \int_{\overline{\Omega}} p(t,x,y) f(y) \dx y,
	\end{equation}
	for every $x \in \overline{\Omega}$, $t>0$ and $f \in C_b(\overline{\Omega})$.
	Moreover, for all $t > 0$ one has $p(t,x,y) > 0$ for $x,y \in \Omega$ and $p(t,x,y) = 0$ for $x \in \partial \Omega$ and $y \in \overline{\Omega}$.
\end{prop}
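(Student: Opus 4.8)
The plan is to construct $\cT$ as the monotone limit of the Dirichlet semigroups on a smooth bounded exhaustion of $\Omega$, following closely the strategy of \cite{metafune_pallara_wacker2002} and \cite{fornaro_metafune_priola2004} (the differentiability of the coefficients assumed in the latter being needed there only for gradient estimates, not for the construction itself). First I would fix an increasing sequence of bounded open sets $\Omega_n$ with smooth boundary, $\overline{\Omega_n} \subseteq \Omega_{n+1}$ and $\bigcup_n \Omega_n = \Omega$. On each $\Omega_n$ the operator $A$ is uniformly elliptic with locally $\alpha$-Hölder coefficients on the compact set $\overline{\Omega_n}$, so classical parabolic Schauder theory provides a positive contraction semigroup $\cT^{(n)} = (T_t^{(n)})_{t > 0}$ on $C(\overline{\Omega_n})$ that solves the homogeneous Dirichlet problem, together with a jointly continuous, strictly positive fundamental solution $p_n(t,\argument,\argument)$ on $\Omega_n \times \Omega_n$ with $\int_{\Omega_n} p_n(t,x,y)\dx y \le 1$.

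Next I would compare across $n$ by the parabolic maximum principle: extending $T_t^{(n)}f$ by zero outside $\Omega_n$, one checks that for $0 \le f \in C_b(\overline{\Omega})$ the sequence $(T_t^{(n)}f)_n$ is nondecreasing and bounded by $\|f\|_\infty$, and correspondingly $(p_n)_n$ increases. Hence $T_tf \coloneqq \lim_n T_t^{(n)}f$ exists pointwise, extends by linearity to a positive contraction on $C_b(\overline{\Omega})$, the kernels $p_n$ increase to a measurable kernel $p \colon (0,\infty) \times \Omega \times \Omega \to [0,\infty)$ with $\int_\Omega p(t,x,y)\dx y \le 1$, and monotone convergence yields the integral representation~\eqref{eq.rep} as well as the boundary values $(T_tf)(x) = 0$ for $x \in \partial\Omega$. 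The semigroup law for $\cT$ is inherited from the Chapman--Kolmogorov identities for the $p_n$ in the monotone limit.

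It remains to identify the generator and establish the regularity and positivity statements. The Lyapunov function $V$ is exactly what pins down the domain: since $(\lambda_0 - A)V \ge 0$ and $V \to \infty$ at infinity, the argument behind \cite[Theorem~3.7]{metafune_pallara_wacker2002} (in the Dirichlet-boundary form of \cite{fornaro_metafune_priola2004}) shows that the bounded solution of $\lambda u - Au = f$, $u|_{\partial\Omega} = 0$, is unique, so that the pseudoresolvent built from $\cT$ is injective and $A$ with the stated domain $D(A)$ is genuinely its generator in the sense of Section~\ref{s.strong-feller-semigroups}; this is precisely the statement $D_{\max} = \hat D$ referred to after Hypothesis~\ref{hyp.domain}. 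The strong Feller property follows from interior parabolic regularity: writing $T_t = T_{t/2}T_{t/2}$ and combining the kernel representation of $T_{t/2}$ applied to bounded measurable data with interior Schauder estimates shows $T_t B_b(\overline{\Omega}) \subseteq C_b(\overline{\Omega})$. Finally, $p(t,x,y) > 0$ for $x,y \in \Omega$ follows from the strong parabolic maximum principle and a Harnack chain argument on the connected set $\Omega$: each $p_n$ is strictly positive on $\Omega_n$, and Harnack chains joining $x$ and $y$ inside some $\Omega_n$ prevent the increasing limit from vanishing at any interior point.

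The main obstacle is the domain identification in the third paragraph, i.e.\ verifying that the minimal (monotone-limit) semigroup really has generator $A$ on exactly $D(A)$ rather than on a proper extension; this is where the Lyapunov function is indispensable and where one must invoke the uniqueness results of \cite{fornaro_metafune_priola2004} (the Dirichlet analogues of \cite[Theorems~3.7 and~3.12]{metafune_pallara_wacker2002}). Everything else -- the monotone limit, the kernel, the representation~\eqref{eq.rep}, the strong Feller property via regularity, and positivity via Harnack -- is routine once this step is in place.
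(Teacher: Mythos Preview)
Your proposal is correct and follows essentially the same monotone-limit strategy as the paper's sketch. Two small points of divergence are worth flagging. First, the paper exhausts by $\Omega_n = \Omega \cap B(0,n)$, so that each approximating problem already carries the Dirichlet condition on $\partial\Omega \cap B(0,n)$ and continuity of the limit $T_tf$ up to $\partial\Omega$ (with value $0$) is immediate; with your compactly contained exhaustion $\overline{\Omega_n} \subseteq \Omega_{n+1}$ the boundary points of $\Omega$ are never seen by the approximants, and ``monotone convergence'' alone only gives the value $0$ there by fiat --- continuity at $\partial\Omega$ then needs a separate barrier argument, which you do not mention. Second, for the strong Feller property the paper argues via the increasing limit of strong Feller operators combined with continuity of $T_t\one$ (invoking \cite[Theorem~7.5]{kunze2020}) rather than via interior Schauder estimates; either route works.
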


\begin{proof}
	See \cite[Sect.\ 2]{fornaro_metafune_priola2004} or \cite[Sect.\ I.12.1]{lorenzi2017}.	
\end{proof}

The following gives a characterization of the asymptotic behavior of the semigroup $T$ similar to Theorem \ref{t.appl-whole-space}. In a way, the Dirichlet boundary conditions are responsible for the fact that assertion (1) of that theorem cannot occur.

\begin{thm}\label{t.appl-dirichlet}
	For the semigroup $\cT=(T(t))_{t>0}$ from Proposition \ref{p.domaingen} we have $\ker A'= \{0\}$. Moreover:
	\begin{enumerate}[\upshape(a)]
		\item 
		If $\ker A = \{0\}$,
		then for each $f \in C_b(\overline{\Omega})$ one has $T_tf \to 0$ locally uniformly on $\overline{\Omega}$ as $t \to \infty$ and for each $\nu \in \cM (\overline{\Omega})$ one has $T_t'\nu \to 0$ in total variation as $t\to\infty$.

		\item 
		If $\ker A \neq \{0\}$, 
		then there is $f\in C_b(\overline{\Omega})$ such that the pointwise limit of the orbit $(T_tf)$ does not exist and 
		there is $\mu \in \cM (\overline{\Omega})$ such that the orbit $(T_t'\mu)$ does not converge in total variation norm.
	\end{enumerate}
\end{thm}

\begin{proof}
	Assume towards a contradiction that $\ker A' \not= \{0\}$. 
	Then by Lemma~\ref{l.ex-of-positive-fixed-point} there exists a non-zero fixed point $\mu \ge 0$ of the dual semigroup $\cT'$.
	It follows from the boundary behaviour of the kernel function $p$ (Proposition~\ref{p.domaingen}) 
	that $\mu(\Omega) > 0$ and hence, by the strict positivity of $p$ in $\Omega$, $\langle \mu, f \rangle > 0$
	for each non-zero function $0 \le f \in C_b(\overline{\Omega})$.
	
	For each $t \in (0,\infty)$ one has $T_t \one \le \one$ by the contractivity of $\cT$; 
	since $\langle \mu, \one - T_t \one \rangle = 0$, the aforementioned property of $\mu$ implies
	that $T_t \one = \one$, and thus $\one \in \ker A \subset D(A)$, which is a contradiction. 
	
	The claimed properties in~(a) and~(b) now follow from Theorem~\ref{t.main-cont-functions} and Proposition~\ref{p.weak-convergence-of-sg}.
\end{proof}

According to the previous theorem, a criterion to ensure $\ker A =\{0\}$ is needed to decide about the asymptotic behavior of the semigroup.

\begin{prop}
	\label{p.liouville-dirichlet}
	Assume that the number $\lambda_0$ in Hypothesis~\ref{hyp.domain} can be chosen as $\lambda_0 = 0$. Then $\ker A = \{0\}$.
\end{prop}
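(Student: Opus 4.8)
The plan is to use the Lyapunov function $V$ supplied by Hypothesis~\ref{hyp.domain} -- which, for $\lambda_0 = 0$, becomes a genuine supersolution, $AV \le 0$ on $\Omega$ -- as a barrier that forces every bounded $A$-harmonic function with zero boundary values to vanish. As a preliminary normalisation I would exploit that $V$ is continuous on $\overline\Omega$ and tends to $+\infty$ at infinity, hence is bounded below, and that $A$ has no zeroth-order term and therefore annihilates constants; replacing $V$ by $V - \inf_{\overline\Omega}V + 1$ I may thus assume $V \ge 1$ on $\overline\Omega$ while keeping $AV \le 0$ and $V(x) \to \infty$ as $|x| \to \infty$.

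Next I would fix $u \in \ker A$, so that $u \in C_b(\overline\Omega) \cap \bigcap_{1<p<\infty} W^{2,p}_{\mathrm{loc}}(\Omega)$, $Au = 0$ on $\Omega$ and $u = 0$ on $\partial\Omega$; it suffices to show $u \le 0$ on $\overline\Omega$, since applying this to $-u \in \ker A$ then yields $u \equiv 0$. For $\eps > 0$ I set $w \coloneqq u - \eps V \in C(\overline\Omega) \cap \bigcap_{1<p<\infty} W^{2,p}_{\mathrm{loc}}(\Omega)$, which is a strong subsolution, $Aw = -\eps AV \ge 0$. Since $V \to \infty$ I can choose $R$ so large that $\eps V(y) \ge \|u\|_\infty$ for all $y \in \overline\Omega$ with $|y| \ge R$; on the bounded open set $D_R \coloneqq \Omega \cap B(0,R)$ the boundary decomposes into a part contained in $\partial\Omega$, where $w = -\eps V \le 0$, and a part lying on $\partial B(0,R)$, where $w \le \|u\|_\infty - \eps V \le 0$. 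Hence $w \le 0$ on $\partial D_R$, and the maximum principle applied on $D_R$ gives $w \le 0$ on $D_R$. Thus $u(x) \le \eps V(x)$ for every $x \in \Omega$ with $|x| < R$; letting $R \to \infty$ this holds on all of $\Omega$, and letting $\eps \downarrow 0$ gives $u \le 0$ on $\Omega$, hence on $\overline\Omega$ by the boundary condition.

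The only step requiring real care is the invocation of the maximum principle, because $w$ is merely $W^{2,p}_{\mathrm{loc}}$ (for every finite $p$, in particular for $p = d$) rather than $C^2$, and because the truncated domain $D_R = \Omega \cap B(0,R)$ need not have regular boundary. This is precisely the scope of the Aleksandrov--Bakelman--Pucci maximum principle for strong solutions on arbitrary bounded domains: on the compact set $\overline{D_R}$ the coefficients $a_{ij}, b_j$ are bounded (being locally H\"older continuous), $A$ is uniformly elliptic, and its zeroth-order coefficient vanishes, so $\sup_{D_R} w \le \sup_{\partial D_R} w^+ = 0$, as used above. (Alternatively one avoids the truncation altogether by noting that $\{w \ge 0\}$ is a compact subset of $\Omega$; if it were nonempty, $w$ would attain an interior maximum, and the strong maximum principle for strong solutions would force $w$ to be constant, contradicting $w = -\eps V < 0$ on $\partial\Omega$.) Everything else -- the normalisation of $V$, the two boundary estimates, and the limits in $R$ and $\eps$ -- is routine. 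I would also remark that the argument genuinely uses $\partial\Omega \neq \emptyset$ together with the Dirichlet condition $u|_{\partial\Omega} = 0$: on $\Omega = \R^d$ the constant function lies in $\ker A$, but in that case Hypothesis~\ref{hyp.domain} with $\lambda_0 = 0$ cannot be satisfied in the first place, so there is no contradiction.
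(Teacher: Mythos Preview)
Your proposal is correct and follows essentially the same barrier argument as the paper: both consider $w = u - \eps V$, observe it is $A$-subharmonic, truncate to a bounded domain $\Omega \cap B(0,R)$, apply a maximum principle for strong solutions (you cite ABP, the paper cites Bony's version), and then let $\eps \downarrow 0$. The only cosmetic differences are that you normalise $V \ge 1$ up front (using that $A$ kills constants) whereas the paper carries the constant $K = \inf_{\overline\Omega} V$ through the estimate, and that the paper phrases the localisation step as ``the maximum of $u - \eps V$ must be attained on $\partial\Omega$'' rather than as a direct boundary estimate on $\partial D_R$.
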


\begin{proof}
	Define $K := \inf_{x \in \overline{\Omega}} V(x)$; then 
	$K > -\infty$ since $V(x) \to \infty$ as $\lvert x\rvert \to \infty$.
	
	Fix $u \in \ker A$. It suffices to show that $u \le 0$ on $\Omega$. Let $\varepsilon > 0$ and consider the function $u - \varepsilon V: \overline{\Omega} \to \R$. This function tends to $-\infty$ as $\lvert x\rvert \to \infty$, so it attains its maximum $m$ at a point $x_\varepsilon \in \overline{\Omega}$.
	
	Moreover, the function $u - \varepsilon V$ is \emph{$A$-subharmonic}, i.e., $A(u-\varepsilon V) \ge 0$. We argue that $x_\varepsilon \in \partial \Omega$. Indeed, choose a sufficiently large radius $R$ such that $u-\varepsilon V$ is smaller than $m-1$ outside $\Omega \cap B(0,R)$. Then $\lvert x_\varepsilon\rvert < R$ and hence, $x_\varepsilon \in \overline{\Omega \cap B(0,R)}$, so the restriction of $u-\varepsilon V$ to $\overline{\Omega \cap B(0,R)}$ attains its maximum at $x_\varepsilon$. It thus follows from the maximum principle \cite[Thm.\ 2]{bony1967} that $x_\varepsilon$ is located at the boundary of $\Omega \cap B(0,R)$. Since $\lvert x_\varepsilon \rvert < R$, it follows that actually $x_\varepsilon \in \partial \Omega$.
	
	Hence $u(x_\varepsilon) = 0$, so for every $x \in \Omega$ one has
	\begin{align*}
		(u - \varepsilon V)(x) \le (u-\varepsilon V)(x_\varepsilon) = -\varepsilon V(x_\varepsilon) \le -\varepsilon K, 
	\end{align*}
	and thus $u(x) \le \varepsilon (V(x) - K)$. Since $\varepsilon$ was arbitrary, it follows that $u(x) \le 0$ for each $x \in \Omega$.
\end{proof}

As a first illustration, it is illuminating to apply Proposition~\ref{p.liouville-dirichlet} to the Laplace operator.

\begin{example}
	\label{ex:laplacian-on-ex-dom}
	Consider $A = \Delta$ on an unbounded set $\Omega \subsetneq \R^d$ whose boundary satisfies the smoothness assumption from Hypothesis \ref{hyp.domain}. Note that $V(x) = \lvert x\rvert^2 + 2d$ and $\lambda_0 = 1$ satisfy the assumptions of Hypothesis \ref{hyp.domain}. As it turns out, the long-time behavior of the semigroup depends on the dimension $d$:
	
	\begin{enumerate}[(a)]
		\item Let $d \in \{1,2\}$. Then, for each $f\in C_b(\overline{\Omega})$, $T_tf$ converges to $0$, uniformly 
		on compact subsets of $\overline{\Omega}$, as $t \to \infty$.
		
		\item If $d \ge 3$ and $\Omega = \R^3 \setminus \overline{B(0,r)}$ for some $r>0$, then the semigroup $\cT$ on $C_b(\overline{\Omega})$ is not convergent to $0$ as $t \to \infty$.
	\end{enumerate}
\end{example}
\begin{proof}
	(a): We may, and shall, assume that $0$ is in the interior of $\R^d \setminus \Omega$. By Theorem \ref{t.appl-dirichlet} and Proposition \ref{p.liouville-dirichlet}, it suffices to construct a function $\tilde V$ as in 
	Hypothesis~\ref{hyp.domain} where $\lambda_0$ can be chosen as $0$.
	
	Let $\tilde V: \R^d \setminus \{0\} \to \R$ denote the Newton potential. Since $0$ is not in the boundary of $\Omega$, it follows that $\tilde V \in C^2(\overline{\Omega})$. Clearly, $-A\tilde V = -\Delta \tilde V = 0$. Moreover, $\tilde V(x) \to \infty$ as $\lvert x \rvert \to \infty$ since $d \le 2$. 
	
	(b) This time, the Newton potential does not serve as a Lyapunov function, but as a counterexample; denote it again by $\tilde V$. Then, for an appropriately chosen number $c > 0$, the function $\tilde V + c \one$ vanishes on the boundary of $B(0,r)$, and it is bounded since $d \ge 3$. Hence, $\tilde V + c \one \in D(A)$ and thus $\tilde V + c \one \in \ker A$, so this function is a fixed vector of the semigroup $\cT$. Hence, one does not have convergence to $0$.
\end{proof}

Concerning Example~\ref{ex:laplacian-on-ex-dom}(b) the following two remarks are in order.

\begin{rems}
	(a) Example~\ref{ex:laplacian-on-ex-dom}(b) can be extended to more general exterior domains $\Omega =\R^d\setminus K$, where $K$ is a compact set. Indeed, pick $r>0$ large enough, so that $K \subset B(0,r)$. With a bit of effort, it can be shown that the heat semigroup on $\R^d\setminus\overline{B(0,r)}$ is dominated by the heat semigroup on $R^d\setminus K$. If the latter would converge to $0$, then so would, by domination, the former---a contradiction to 
	Example~\ref{ex:laplacian-on-ex-dom}(b).

	(b) There is a probabilistic interpretation of the distinction between the cases $d \le 2$ and $d \ge 3$ in Example~\ref{ex:laplacian-on-ex-dom}: in dimensions $1$ and $2$ Brownian motion is recurrent, i.e., the probability to eventually hit the complement of $\Omega$ is always $1$; since each particle is killed on the boundary due to the Dirichlet boundary conditions, the process loses mass, which explains the convergence to $0$. In dimension $\ge 3$ on the other hand, Brownian motion is transient, so the probability that a trajectory never hits the complement of $\Omega$ is non-zero.
\end{rems}

Another simple but instructive example is the Laplace operator with constant drift on the half line:

\begin{example}
	Let $\Omega = (0,\infty) \subseteq \R$ and let $A$ be given by $Au = u'' + bu'$ for a fixed number $b \in \R$. One can choose $V(x) = x+b$ and $\lambda_0 = 1$ in Hypothesis~\ref{hyp.domain}, so the theory of this section applies. Now distinguish two situations:
	
	(a) If $b > 0$, then the bounded function $u$ given by $u(x) = 1 - e^{-bx}$ for $x \in \Omega$ is in the kernel of $A$, so we do not have convergence of the semigroup.
	
	(b) If $b \le 0$, then one can use the Lyapunov function $\tilde V(x) = x$ rather than $V$ and $\lambda_0 = 0$ rather than $\lambda_0 = 1$ in Hypothesis~\ref{hyp.domain}. Hence, Proposition~\ref{p.liouville-dirichlet} implies that $\ker A = \{0\}$, so the orbits of semigroup on $C_b(\overline{\Omega})$ that is associated with $A$ converge to $0$ uniformly on compact subsets of $\overline{\Omega}$.
\end{example}

The last example in this subsection is the Ornstein--Uhlenbeck semigroup on domains with Dirichlet boundary conditions.

\begin{example}
	Assume that the domain $\Omega \subseteq \R^d$ has sufficiently smooth boundary as described in Hypothesis~\ref{hyp.domain} and that $0$ is not contained in its closure. Consider the Ornstein--Uhlenbeck operator $A$ given by $Au(x) = \Delta u(x) - \langle x, \nabla u(x) \rangle$. It is easy to see that  Hypothesis~\ref{hyp.domain} is satisfied; indeed simply choose the Lyapunov function $\overline{\Omega} \ni x \mapsto \lvert x \rvert^2 + 1 \in \R$ and a sufficiently large number $\lambda_0 > 0$. Hence, Proposition~\ref{p.domaingen} shows that $A$ generates a semigroup $\cT = (T_t)_{t \in (0,\infty)}$ on $C_b(\overline{\Omega})$.
	
	To show that the orbits of the semigroup converge to $0$, we construct a new Lyapunov function $V$ such that $\lambda_0$ can be chosen as $0$ (Proposition~\ref{p.liouville-dirichlet}). Let $\tilde V: \R^d \setminus \{0\} \to \R$ denote the Newton potential.  First consider the case $d\geq 3$. Then $\Delta \tilde V=0$ and  $\langle x, \nabla \tilde V(x)\rangle = c_d \vert x \vert^{2-d}$ for a $d$-dependent constant $c_d$.
	
	Now define the Lyapunov function $V$ as $V(x) = \lvert x\rvert^2 + r \tilde V(x)$ for a large number $r > 0$. Then
	\begin{align*}
		AV(x) = 2d - 2 \lvert x \rvert^2 - r c_d \lvert x \vert^{2-d} \quad \text{for} \quad x \in \overline{\Omega};
	\end{align*}
	So if $r$ has been chosen sufficiently large, $AV \le 0$ in $\overline{\Omega}$. Hence, Proposition~\ref{p.liouville-dirichlet} implies that $\ker A = \{0\}$, and Corollary~\ref{c.main} shows that, for each $f \in C_b(\overline{\Omega})$, $T_tf \to 0$ uniformly on compact subsets of $\overline{\Omega}$ as $t \to \infty$. 
	
	For the cases $d=1, 2$, one easily verifies that $A\tilde V \leq 0$ on $\Omega$, so that in these cases $\tilde V$ itself can be used as a Lyapunov function.
\end{example}

\subsection{Irreducible systems coupled by matrix potentials} \label{s.appl-irred-systems}

This section is about the asymptotic behavior of coupled systems of parabolic equations recently considered in 
\cite{addona_angiuli_lorenzi2019}. Formally, these equations are governed by $m\geq 2$ elliptic operators on $\R^d$ that are coupled only via the terms of order zero. To be more precisely, consider the operator $\bA$ (we follow the convention of \cite{addona_angiuli_lorenzi2019} and denote vector-valued objects with boldface letters), given by
\begin{align*}
	\bA\bu & \coloneqq \sum_{i,j=1}^d a_{ij}(x)D_{ij}\bu (x) + \sum_{j=1}^d b_j(x)D_j\bu (x) +\bC(x)\bu(x),
\end{align*}
where $a_{ij}, b_j : \R^d \to \R$ are scalar functions, where $\bC : \R^d \to \R^{m\times m}$ is matrix-valued and where $\bu : \R^d \to \R^m$. Note that the coupling of the $m$ components is only via the matrix-valued function $\bC$. Skipping this potential term, one obtains a diagonal operator and, indeed, the scalar operator 
\begin{equation}\label{eq.scalara}
	A \coloneqq \sum_{i.j=1}^d a_{ij}D_{ij} + \sum_{j=1}^d b_jD_j
\end{equation}
plays an important role in \cite{addona_angiuli_lorenzi2019}.

The following standing assumptions are imposed on the coefficients.

\begin{hyp}\label{hyp2}
	Assume that, for some $\alpha>0$, the coefficients $a_{ij}, b_j$ ($i,j=1, \ldots, \infty$) and $c_{kl}$ ($k,l=1, \ldots, m$) are locally $\alpha$-H\"older continuous. The coefficients $a_{ij}$ are assumed to be symmetric (i.e., $a_{ij}=a_{ji}$ for all $i,j$) and strictly elliptic in the sense that for some $\eta>0$ one has
	\[
		\sum_{i,j=1}^d a_{ij}(x)\xi_i\xi_j \geq \eta |\xi|^2
	\]
	for all $x\in \R^d$ and $\xi\in \R^d$. Moreover, assume the following:
	\begin{enumerate}[(a)]
		\item There is a function $\varphi \in C^2(\R^d)$ satisfying $\lim_{|x|\to\infty} \varphi (x) = \infty$, such that for some constants $\alpha, \beta>0$, one has $A\varphi \leq \alpha - \beta \varphi$.
		\item The matrix $\bC = (c_{kl})$ satisfies:
		\begin{enumerate}[(i)]
			\item For all $x\in \R^d$ and $y\in \R^m$ the inequality $\langle \bC(x)y,y\rangle \leq 0$ holds.
			\item $c_{kl} \geq 0$ for $k\neq l$.
			\item There is a vector $\xi \in \R^m\setminus \{0\}$ such that $\xi \in \bigcap_{x\in \R^d} \ker \bC(x)$.
			\item If $K \subseteq \{1, \ldots, m\}$ is such that $c_{kl}\equiv 0$ whenever $k\in K$ and $l\not\in K$, then $K=\emptyset$ or $K=\{1, \ldots, m\}$.
		\end{enumerate}
	\end{enumerate}
\end{hyp}

Under the above assumptions, the intersection $\bigcap_{x\in \R^d}\ker \bC(x)$ is one-dimensional and we may (and shall) assume that $\xi \ge 0$ throughout the rest of the section; these observations are established in the proof of \cite[Prop.\ 3.2]{addona_angiuli_lorenzi2019}.

\begin{rem}
	Note that under the assumptions of Hypothesis \ref{hyp2} the scalar operator $A$, defined by \eqref{eq.scalara} satisfies
Hyposthesis \ref{hyp1}. Moreover, by assumption~(a) of Hypothesis~\ref{hyp2} $\varphi$ acts as a Lyapunov function for that operator $A$. By
 \cite[Thm.\ 6.3]{metafune_pallara_wacker2002} this implies that (adopting the notation from  Section \ref{s.appl-whole-space}) $\hat D = D_\mathrm{max}$ and that there exists an invariant measure for the scalar semigroup $T = (T_t)_{t\geq 0}$ generated by $A$.
\end{rem}

Under the assumptions in Hypothesis~\ref{hyp2}, it is proved in \cite{addona_angiuli_lorenzi2019} that there is a semigroup
$\bT = (\bT_t)_{t\geq 0}$ on the space $C_b(\R^d; \R^m)$, such that for every $\bff \in C_b(\R^d; \R^m)$ the function
$\bu (t, x) \coloneqq (\bT_t \bff)(x)$ is the unique bounded, classical solution of the Cauchy problem
\[
	\begin{cases} 
		D_t\bu = \bA\bu & \mbox{ on } (0,\infty)\times \R^d, \\
		\bu(0, x) = \bff (x)& \mbox{ for } x\in \R^d.
	\end{cases}
\]
In order to apply our abstract results, we rewrite this system as follows. Set
\[
	\Omega \coloneqq \{1, \ldots, m\}\times \R^d \subseteq \R^{d+1}
\]
and identify the vector-valued function $\bff = (f_1, \ldots, f_m): \R^d \to \R^m$ with the scalar-valued function $f: \Omega \ni (k,x) \mapsto f_k(x) \in \R$. Likewise, define the semigroup $\cT = (T_t)_{t\geq 0}$ on $C_b(\Omega)$ by setting
\[
	(T_tf)(k,x) \coloneqq (\bT_t \bff )_k(x).
\]
This semigroup has the following properties:

\begin{prop}\label{p.sg}
	Under the assumptions of Hypothesis~\ref{hyp2}, $\cT$ is a bounded semigroup of positive operators on the space $C_b(\Omega)$. Moreover, the semigroup consists of strongly Feller operators.
\end{prop}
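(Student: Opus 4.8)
The plan is to reduce the proposition to properties of the vector-valued semigroup $\bT$ on $C_b(\R^d;\R^m)$ constructed in \cite{addona_angiuli_lorenzi2019}, transported along the canonical identification of $C_b(\Omega)$ with $C_b(\R^d;\R^m)$. First I would introduce the linear bijection $\iota$ sending $\bff = (f_1,\dots,f_m) \in C_b(\R^d;\R^m)$ to the scalar function $(k,x)\mapsto f_k(x)$ on $\Omega = \{1,\dots,m\}\times\R^d$, and record that $\iota$ is a bijective lattice homomorphism that is bounded together with its inverse: indeed $\|\iota\bff\|_\infty = \sup_{k,x}|f_k(x)|$ is equivalent, with constant $\sqrt m$, to the norm $\bff\mapsto\sup_x|\bff(x)|_{\R^m}$ on $C_b(\R^d;\R^m)$, and $\bff$ is componentwise nonnegative precisely when $\iota\bff\ge 0$ in the Banach lattice $C_b(\Omega)$. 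By the very definition of $\cT$ one has $T_t = \iota\,\bT_t\,\iota^{-1}$ for every $t$, so $\cT$ inherits the semigroup law from $\bT$, and $\cT$ is positive if and only if $\bT$ leaves the cone of componentwise nonnegative functions invariant --- which I would deduce from the off-diagonal sign condition $c_{kl}\ge 0$ for $k\ne l$ (Hypothesis~\ref{hyp2}(b)(ii)), as is done in \cite{addona_angiuli_lorenzi2019}.

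For boundedness I would invoke --- or re-derive from a parabolic maximum principle --- the fact that $\bT$ is contractive for the norm $\bff\mapsto\sup_x|\bff(x)|_{\R^m}$: the dissipativity condition $\langle\bC(x)y,y\rangle\le 0$ from Hypothesis~\ref{hyp2}(b)(i), together with the ellipticity of $(a_{ij})$, forces $t\mapsto\sup_x|(\bT_t\bff)(x)|_{\R^m}$ to be nonincreasing; see \cite{addona_angiuli_lorenzi2019}. Since $\|\iota\|\le 1$ and $\|\iota^{-1}\|\le\sqrt m$, this yields $\sup_{t>0}\|T_t\| = \sup_{t>0}\|\iota\,\bT_t\,\iota^{-1}\|\le\sqrt m$, so $\cT$ is bounded; note that in general $\cT$ is \emph{not} contractive, the loss being exactly the passage from the Euclidean norm on $\R^m$ to the supremum norm on $\Omega$.

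For the strong Feller property I would use the integral representation of $\bT$: there is a matrix-valued kernel $(g_{kl})_{k,l=1}^m$ with $(\bT_t\bff)_k(x) = \sum_{l=1}^m\int_{\R^d} g_{kl}(t,x,y)f_l(y)\dx y$, and $\bT_t$ maps $B_b(\R^d;\R^m)$ into $C_b(\R^d;\R^m)$ for each $t>0$. These facts are available from \cite{addona_angiuli_lorenzi2019}; if needed, they can also be recovered from the construction of $\bT$ as an increasing limit of strongly Feller semigroups with truncated potential on the balls $B(0,n)$ --- whose decoupled parts are strongly Feller by \cite[Corollary~4.7]{metafune_pallara_wacker2002} --- using that a uniformly bounded, locally uniformly convergent net of continuous functions has a continuous limit. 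Transporting this through $\iota$: for each $(k,x)$ the set function $\{l\}\times B\mapsto\int_B g_{kl}(t,x,y)\dx y$ extends to a positive measure on $\Omega$ of total mass $(\bT_t\one)_k(x)\le\|\bT_t\|$, and these measures form a bounded positive kernel $k_t$ on $\Omega$; the associated kernel operator on $B_b(\Omega)$ restricts to $T_t$ on $C_b(\Omega)$, and since $(\bT_t\bff)_k\in C_b(\R^d)$ for all $\bff\in B_b(\R^d;\R^m)$ it maps $B_b(\Omega)$ into $C_b(\Omega)$. Hence each $T_t$ with $t>0$ is a strongly Feller kernel operator. I expect the only genuine difficulty to reside in the cited works \cite{addona_angiuli_lorenzi2019, metafune_pallara_wacker2002} --- the construction of $\bT$ and of the scalar semigroups, their boundedness, positivity, kernel representations, and smoothing properties; granting these, the proposition is the bookkeeping described above.
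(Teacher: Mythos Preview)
Your proposal is correct and follows essentially the same approach as the paper: both arguments transport the properties of the vector-valued semigroup $\bT$ (existence, positivity via the off-diagonal sign condition, contractivity in the Euclidean-induced norm, and the strong Feller property) through the identification of $C_b(\R^d;\R^m)$ with $C_b(\Omega)$, with the key observation that Euclidean contractivity of $\bT$ yields only boundedness --- not contractivity --- of $\cT$ in the scalar supremum norm. Your write-up is more explicit about the isomorphism $\iota$ and the norm constants, and you additionally sketch an alternative route to the strong Feller property via approximation on balls, but the core argument is the same as the paper's.
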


\begin{proof}
	The authors of \cite{addona_angiuli_lorenzi2019} and their coauthors established the existence of the semigroup in earlier works, see \cite{delmonte_lorenzi2011, addonna_et_al2017}. In particular, it follows from \cite[Thm.\ 3.2]{addonna_et_al2017} that the semigroup consists of strongly Feller operators. The positivity of the semigroup is established in \cite[Prop.\ 2.8]{addona_angiuli_lorenzi2019} making use of Hypothesis \ref{hyp2}(b)(ii). It is also proved in \cite{addona_angiuli_lorenzi2019} (see Equation (2.8) in that article) that the semigroup $\bT$ is contractive. However, the authors used on $\R^m$ the Euclidian norm $|y| = \sqrt{y_1^2+\ldots + y_m^2}$ and on $C_b(\R^d; \R^m)$ the induced supremum norm. In contrast to that we use, on the scalar space $C_b(\Omega)$, the usual scalar supremum norm -- which corresponds to using the norm $|y|_\infty \coloneqq \max \{|y_1|, \ldots, |y_m|\}$ on $\R^m$. Therefore, with respect to the usual norm on $C_b(\Omega)$ the semigroup $\cT$ is bounded.
\end{proof}

In \cite{addona_angiuli_lorenzi2019}, the authors defined a \emph{system of invariant measures} for the semigroup $\bT$  as a family $\{\mu_k : k = 1, \ldots, m\}$ of positive finite Borel measures on $\R^d$ such that for every $\bff \in C_b(\R^d; \R^m)$,
\begin{equation}\label{eq.invariant}
	\sum_{k=1}^m \int_{\R^d} (\bT_t\bff)_k\, d\mu_k = \sum_{k=1}^m \int_{\R^d} f_k \, d\mu_k.
\end{equation}
By identifying the family $\{\mu_1, \ldots, \mu_m\}$ with the measure $\mu$ on $\Omega$ given by
\[
	\mu (\{k\}\times A ) = \mu_k (A),
\]
Equation~\eqref{eq.invariant} is equivalent to 
\[
	\int_\Omega T_t f \, d\mu = \int_\Omega f\, d\mu,
\]
which simply means that $\mu$ is an invariant measure for the semigroup $\cT'$.

The following result is important for the asymptotic behavior.

\begin{prop}\label{p.fixed}
	Under the assumption of Hypothesis~\ref{hyp2} the following holds.
	\begin{enumerate}[(a)]
		\item The space $\fix\cT$ is the span of the vector $\chi$ given by $\chi (k,x) \coloneqq \xi_k$ for every $(k,x) \in \Omega$ (where $\xi \in \R^m \setminus \{0\}$ is the vector from Hypothesis~\ref{hyp2}(b)(iii)).
		\item There exists a (unique up to strictly positive multiples) non-zero invariant measure $0 \le \mu \in \cM(\Omega)$.
		\item One has $\langle \mu, \chi \rangle \not= 0$.
	\end{enumerate}
\end{prop}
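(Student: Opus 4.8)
The plan is to prove~(a), then the existence part of~(b) together with~(c), and finally to deduce the uniqueness part of~(b) from the Tauberian theorem of Section~\ref{s.strong-feller-semigroups}. I would first dispose of the inclusion $\lh\{\chi\}\subseteq\fix\cT$: since $\xi$ is constant and $\bC(x)\xi=0$ for every $x\in\R^d$ (Hypothesis~\ref{hyp2}(b)(iii)), the constant map $x\mapsto\xi$ is a bounded classical solution of $D_t\bu=\bA\bu$ with initial datum $\xi$; as $\bT_t\xi$ is, by the construction of $\bT$ in~\cite{addona_angiuli_lorenzi2019}, \emph{the} unique such solution, we get $\bT_t\xi=\xi$, i.e.\ $T_t\chi=\chi$. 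Along the way I would record that $\xi$ is in fact strictly positive: if $\xi_k=0$ for some $k$, then $0=(\bC(x)\xi)_k=\sum_{l:\,\xi_l>0}c_{kl}(x)\xi_l$ with all summands $\ge0$ by Hypothesis~\ref{hyp2}(b)(ii), so $c_{kl}\equiv0$ whenever $\xi_k=0<\xi_l$, and Hypothesis~\ref{hyp2}(b)(iv) applied to $\{k:\xi_k=0\}$ yields a contradiction; hence $\chi\ge\delta\one$ with $\delta\coloneqq\min_k\xi_k>0$.

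For the reverse inclusion $\fix\cT\subseteq\lh\{\chi\}$, I would take $0\ne\bff\in\fix\cT=\fix\bT$. Then $\bff$ lies in the domain of the generator of $\bT$ and $\bA\bff=0$, so $\bff$ is twice continuously differentiable by interior elliptic regularity, and a short computation using $\bA\bff=0$, the ellipticity of $(a_{ij})$ and Hypothesis~\ref{hyp2}(b)(i) gives, for the scalar operator $A$ of~\eqref{eq.scalara}, the inequality $A(|\bff|^2)=2\sum_{i,j}a_{ij}D_i\bff\cdot D_j\bff-2\langle\bC(\cdot)\bff,\bff\rangle\ge2\eta|\nabla\bff|^2\ge0$. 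Next I would invoke the pointwise domination $|\bT_t\bff|^2\le T^{\mathrm{sc}}_t|\bff|^2$ of the system by the scalar semigroup $(T^{\mathrm{sc}}_t)$ generated by $A$ --- the estimate that underlies the boundedness of $\bT$, see~\cite{addona_angiuli_lorenzi2019} --- which for the fixed point $\bff$ reads $T^{\mathrm{sc}}_t|\bff|^2\ge|\bff|^2$. Under Hypothesis~\ref{hyp2} the scalar operator $A$ satisfies Hypothesis~\ref{hyp1} with $\hat D=D_{\max}$ and possesses an invariant probability measure $\varrho$ of full support~\cite{metafune_pallara_wacker2002}, so Theorem~\ref{t.appl-whole-space} shows that $T^{\mathrm{sc}}_t g$ converges pointwise to a constant (the projection onto $\ker\hat A=\lh\{\one\}$) for every $g\in C_b(\R^d)$. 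Letting $t\to\infty$ in $T^{\mathrm{sc}}_t|\bff|^2\ge|\bff|^2$, and using $\varrho$-invariance together with dominated convergence, gives $|\bff|^2\le c\one$ with $c\coloneqq\int|\bff|^2\,d\varrho$; since $c-|\bff|^2\ge0$ is continuous with zero $\varrho$-integral and $\varrho$ has full support, $|\bff|^2\equiv c$. Then $A(|\bff|^2)=0$, so the displayed identity forces $\nabla\bff\equiv0$, i.e.\ $\bff$ is a constant vector $\bff_0$ with $\bC(x)\bff_0=0$ for all $x$; hence $\bff_0\in\bigcap_x\ker\bC(x)=\lh\{\xi\}$, the intersection being one-dimensional by~\cite[proof of Proposition~3.2]{addona_angiuli_lorenzi2019}. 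This would prove~(a).

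For the remaining assertions: the existence of a non-zero $0\le\mu\in\cM(\Omega)$ with $T_t'\mu=\mu$ follows by identifying a system of invariant measures for $\bT$ --- which exists under Hypothesis~\ref{hyp2} by~\cite{addona_angiuli_lorenzi2019} --- with a measure on $\Omega$ as in the discussion preceding the proposition; and~(c) is then immediate, since $\langle\mu,\chi\rangle=\int_\Omega\chi\,d\mu\ge\delta\,\mu(\Omega)>0$ by the strict positivity of $\chi$. For the uniqueness in~(b) I would combine~(a) and~(c): the fixed measure $\mu$ does not vanish on $\chi$, which spans $\fix\cT$, so $\fix\cS$ separates $\fix\cT$; since $\Omega$ is Polish and $\cT$ is a bounded semigroup of positive kernel operators on $C_b(\Omega)$ containing a strongly Feller operator (Proposition~\ref{p.sg}), Theorem~\ref{t.main-cont-functions} applies and, together with Proposition~\ref{p.weak-convergence-of-sg}, yields $\dim\fix\cS=\dim\fix\cT=1$, whence $\fix\cS=\lh\{\mu\}$.

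The main obstacle is the reverse inclusion in~(a). It rests on two ingredients imported from the PDE literature --- the domination of the vector-valued semigroup by the scalar one (essentially the reason $\bT$ is bounded) and the ergodic/Liouville property of the scalar operator $A$ --- together with the one-dimensionality of $\bigcap_x\ker\bC(x)$ from~\cite{addona_angiuli_lorenzi2019}. Once these are in place the vector-valued problem is reduced to the scalar situation of Section~\ref{s.appl-whole-space}, and the remaining steps are routine applications of the results of this paper.
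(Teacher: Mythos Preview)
Your argument is correct. The paper's own proof, however, is merely a string of citations: part~(a) is quoted verbatim as \cite[Proposition~3.2]{addona_angiuli_lorenzi2019}, the existence and uniqueness in~(b) as \cite[Theorem~3.5]{addona_angiuli_lorenzi2019}, and~(c) is read off from the explicit form of $\mu$ given there. What you have done for~(a) is essentially to reconstruct the proof of \cite[Proposition~3.2]{addona_angiuli_lorenzi2019} --- the domination $|\bT_t\bff|^2\le T^{\mathrm{sc}}_t|\bff|^2$, the scalar Liouville theorem, and the reduction to $\bigcap_x\ker\bC(x)$ are precisely the ingredients used there --- so this part is not a different route but a (correct) unpacking of the citation.

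Where you genuinely diverge is in the \emph{uniqueness} part of~(b). The cited result \cite[Theorem~3.5]{addona_angiuli_lorenzi2019} proves uniqueness by a direct, and rather lengthy, analysis of systems of invariant measures; you instead derive it from~(a) and~(c) via Theorem~\ref{t.main-cont-functions} and Proposition~\ref{p.weak-convergence-of-sg}, using that $\dim\fix\cS=\dim\fix\cT=1$. This is exactly the shortcut the paper itself advertises in the Remark immediately following the proposition (invoking Remark~\ref{r.converse-separation} to the same effect), so your deviation is in the spirit of the paper and buys a much shorter, more conceptual argument for uniqueness at no cost.
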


\begin{proof}
	Part~(a) is exactly \cite[Prop.\ 3.2]{addona_angiuli_lorenzi2019}, part~(b) follows from \cite[Thm.\ 3.5]{addona_angiuli_lorenzi2019}. Part~(c) follows from the special form of $\chi$ and of $\mu$, where the latter is given in \cite[Thm.\ 3.5]{addona_angiuli_lorenzi2019}.
\end{proof}

\begin{rem}
	In the proof of \cite[Thm.\ 3.5]{addona_angiuli_lorenzi2019} the authors verify directly
that if $\nu$ denotes an invariant measure for the semigroup generated by the scalar operator $A$, then $\xi\nu$ is a system of invariant measures for $\bT$.
	This is rather straightforward and, actually, a rather short part of the proof of \cite[Thm.\ 3.5]{addona_angiuli_lorenzi2019}. The rest of the proof is actually devoted to showing uniqueness, more precisely that any other system of invariant measures is a scalar multiple of this. However, as explained in Remark \ref{r.converse-separation}, uniqueness of the invariant measure follows directly from the fact that $\fix \mathscr{T}$ is one-dimensional.
\end{rem}

In \cite[Sect.\ 4]{addona_angiuli_lorenzi2019} the authors prove, under additional regularity and growth assumptions on the coefficients $a_{ij}$ and $b_j$, that the semigroup converges as $t\to \infty$. The proof of this in \cite{addona_angiuli_lorenzi2019} is rather involved, it is 5 pages long. By using the abstract results in Theorem~\ref{t.main-cont-functions} and Corollary~\ref{c.main}, one obtains the following result immediately from the properties of the semigroup established in Proposition \ref{p.fixed}, without any additional growth assumptions on the coefficients.

\begin{thm} \label{t.conv-for-irred-system}
	Assume Hypothesis~\ref{hyp2} and let $0 \le \mu \in \cM(\Omega)$ be the non-zero invariant measure for $\cT$, normalized such that $\int_\Omega \chi\, d\mu = 1$. Then:
	\begin{enumerate}[(a)]
		\item For each $f \in C_b(\Omega)$, $T_tf \to \int_\Omega f\, d\mu \cdot \chi$ uniformly on compact subsets of $\Omega$ as $t \to \infty$.
		\item For each $\nu \in \cM(\Omega)$, $T_t'\nu \to \int_\Omega \chi\, d\nu \cdot \mu$ in total variation norm as $t \to \infty$.
	\end{enumerate}
\end{thm}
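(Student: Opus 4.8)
The plan is to derive both assertions directly from the Tauberian theorem for strongly Feller semigroups, Theorem~\ref{t.main-cont-functions} (or, if one prefers to work with the generator, Corollary~\ref{c.main}), and then to identify the two rank-one limit operators via the duality between $\cT$ and $\cT'$ together with the description of the fixed spaces supplied by Proposition~\ref{p.fixed}.

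First I would verify the hypotheses of Theorem~\ref{t.main-cont-functions}. By Proposition~\ref{p.sg}, $\cT = (T_t)_{t\in(0,\infty)}$ is a bounded semigroup of positive operators on $C_b(\Omega)$ and each $T_t$ is strongly Feller, hence a kernel operator; following the remark after Theorem~\ref{t.main-cont-functions} I would extend $\cT$ uniquely to a bounded semigroup of positive kernel operators on $B_b(\Omega)$, still denoted $\cT$. Since $\cT$ contains a strongly Feller operator, all its fixed points are continuous, so $\fix\cT = \lh\{\chi\}$ by Proposition~\ref{p.fixed}(a). The only non-trivial hypothesis of Theorem~\ref{t.main-cont-functions} left to check is condition~(iv), i.e.\ that $\fix\cS$ separates $\fix\cT$, where $\cS := \cT'$ is the dual semigroup on $\cM(\Omega)$. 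But the invariant measure $0\le\mu$ from Proposition~\ref{p.fixed}(b) is a fixed point of $\cS$, and $\langle\mu,\chi\rangle\neq 0$ by Proposition~\ref{p.fixed}(c); since $\fix\cT$ is spanned by the single vector $\chi$, this already shows that $\fix\cS$ separates $\fix\cT$. Because $\mu$ and $\chi$ are both positive we have $\langle\mu,\chi\rangle>0$, and I would fix the normalization $\langle\mu,\chi\rangle = 1$ as in the statement.

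Theorem~\ref{t.main-cont-functions} then yields that, for every $f\in C_b(\Omega)\subseteq B_b(\Omega)$, $T_tf$ converges uniformly on compact subsets of $\Omega$ to a function $Qf\in C_b(\Omega)$, and that, for every $\nu\in\cM(\Omega)$, $T_t'\nu$ converges in total variation norm to $P\nu := Q'\nu$; moreover $Q$ is a projection onto $\fix\cT = \lh\{\chi\}$ and $P$ is a projection onto $\fix\cS$, which by Proposition~\ref{p.weak-convergence-of-sg} has the same dimension as $\fix\cT$ and hence equals $\lh\{\mu\}$. Thus $Qf = c(f)\,\chi$ and $P\nu = d(\nu)\,\mu$ for scalars $c(f),d(\nu)$ depending linearly on $f$ and $\nu$. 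To compute them I would use the duality $\langle P\nu, f\rangle = \langle\nu, Qf\rangle$, which reads $d(\nu)\langle\mu,f\rangle = c(f)\langle\nu,\chi\rangle$ for all $f$ and $\nu$. Since $P$ is a projection onto $\lh\{\mu\}$ it fixes $\mu$, so $d(\mu) = 1$; inserting $\nu = \mu$ and using $\langle\mu,\chi\rangle = 1$ gives $c(f) = \langle\mu,f\rangle = \int_\Omega f\dx\mu$, and in particular $c(\chi) = 1$. Inserting $f = \chi$ then yields $d(\nu) = c(\chi)\langle\nu,\chi\rangle = \int_\Omega\chi\dx\nu$. This is precisely the form of the limits claimed in~(a) and~(b).

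I do not expect a genuine obstacle: the whole point of the theorem is that the abstract results of Section~\ref{s.strong-feller-semigroups} apply to these coupled systems with no extra regularity or growth hypotheses on the coefficients, once the structural facts about the fixed spaces (Proposition~\ref{p.fixed}, essentially taken from \cite{addona_angiuli_lorenzi2019}) are available. The only points that require a little care are the passage from $C_b(\Omega)$ to $B_b(\Omega)$, so that Theorem~\ref{t.main-cont-functions} is literally applicable and $\fix\cT$ is correctly identified as $\lh\{\chi\}$, and the elementary duality computation in the previous paragraph that pins down the two rank-one limit operators.
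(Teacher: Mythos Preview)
Your proposal is correct and follows exactly the approach the paper intends: the paper does not write out a proof at all but simply states that the theorem ``follows immediately from the properties of the semigroup established in Proposition~\ref{p.fixed}'' via Theorem~\ref{t.main-cont-functions} and Corollary~\ref{c.main}. You have faithfully unpacked this, including the passage from $C_b(\Omega)$ to $B_b(\Omega)$, the verification of condition~(iv) from Proposition~\ref{p.fixed}(c), and the elementary duality computation identifying the rank-one limit projections, which the paper leaves entirely to the reader.
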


\subsection{Reducible systems coupled by matrix potentials} \label{s.appl-red-systems}

Now consider what happens when, in the situation of the previous subsection, Hypotheses~\ref{hyp2}(b)(iii) and~(iv) are dropped. 
A glance at \cite{addona_angiuli_lorenzi2019} shows that everything that was said before Proposition~\ref{p.fixed} in the previous subsection remains true without these two assumptions, except that $\bigcap_{x \in \R^d} \ker \bC(x)$ might no longer be one-dimensional. The following observation will be important.

\begin{rem} \label{r.dual-kernel}
	One has
	\begin{align*}
		\ker \bC(x) = \ker \bC(x)^*
	\end{align*}
	for each $x \in \R^d$ in this situation. 
	This equality was stated in \cite[Lem.\ 2.2]{addona_angiuli_lorenzi2019} under the assumption in Hypothesis~\ref{hyp2}(b)(iii); however, the proof there shows that this equality is actually a mere consequence of the dissipativity estimate in Hypothesis~\ref{hyp2}(b)(i). For an alternative proof with a more operator theoretic flavour, see Proposition~\ref{p.fixed-space-of-contractions} in the appendix.
\end{rem}

The goal of this subsection is to show that the semigroup $\cT$ (equivalently, $\bT$), introduced in the previous subsection, still converges at $t \to \infty$, even without the assumptions in Hypothesis~\ref{hyp2}(b)(iii) and~(iv) (but note that the limit operator need no longer have rank $1$, of course). There are at least two possible courses of action in order to prove this:

\begin{enumerate}[(i)]
	\item 
	One can try to subdivide the problem into smaller systems, so that condition (b)(iv) in Hypothesis \ref{hyp2} is satisfied for these smaller systems. Making use of Remark \ref{r.dual-kernel}, one can then show that, either also condition (b)(iii) in Hypothesis \ref{hyp2} is satisfied for the subsystem, or, else, $\bigcap_{x\in \R^d}\ker \bC(x) = \{0\}$. In the latter case, $\fix \cT=\{0\}$ and  Theorem~\ref{t.main-cont-functions} implies convergence (to $0$) of $T_t$ as $t \to \infty$.
	
	\item 
	One shows that $\fix \cT' \subseteq \cM(\Omega)$ separates $\fix \cT$ and then immediately applies Theorem~\ref{t.main-cont-functions}.
\end{enumerate}

Option~(i) seems to be rather cumbersome, and we have not checked whether all details work out well. Instead, the details of approach~(ii) are explained next. Define 
\begin{align*}
	F := \bigcap_{x \in \R^d} \ker \bC(x) \subseteq \R^m.
\end{align*}
Following the arguments in Step~1 of the proof of~\cite[Prop.\ 3.2]{addona_angiuli_lorenzi2019}, one can show that
\begin{align*}
	\fix \bT = \{\xi \one_{\R^d}: \, \xi \in F\};
\end{align*}
this immediately yields a description of $\fix \cT$. On the other hand, repeating the arguments in Step~1 in the proof of \cite[Thm.\ 3.5]{addona_angiuli_lorenzi2019}, one sees that, for each $\xi \in F$, the measure $\mu \in \cM(\Omega)$, given by
\begin{align*}
	\langle \mu, f\rangle = \sum_{k=1}^m \xi_k \langle \mu_0, f(k,\argument) \rangle
\end{align*}
for each $f \in \cM(\Omega)$, is in $\fix \cT'$; here, $0 \le \mu_0 \in \cM(\R^d)$ is a non-zero invariant measure for the semigroup associated to the scalar operator~\eqref{eq.scalara}. (To see that Step~1 in the proof of \cite[Thm.\ 3.5]{addona_angiuli_lorenzi2019} does not require Hypothesis~\ref{hyp2}(b)(iv), one can use Remark~\ref{r.dual-kernel} above.)

This implies that $\fix \cT'$ separates $\fix \cT$, so Theorem~\ref{t.main-cont-functions} implies the following convergence result.

\begin{thm} \label{t.conv-for-red-system}
	Assume Hypothesis~\ref{hyp2} but skip assumptions~(b)(iii) and~(b)(iv). Then:
	\begin{enumerate}[(a)]
		\item For each $f \in C_b(\Omega)$, $T_tf$ converges uniformly on compact subsets of $\Omega$ to a function $Pf \in C_b(\Omega)$ as $t \to \infty$.
		
		\item For each $\nu \in \cM(\Omega)$, $T_t'\nu$ converges in total variation norm to a measure $Q\nu \in \cM(\Omega)$ as $t \to \infty$.
	\end{enumerate}
	The rank of the limit operator $P$ and $Q$ equals $\dim F$ and is thus not larger than $m$.
\end{thm}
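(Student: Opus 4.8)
The plan is to verify the hypotheses of Theorem~\ref{t.main-cont-functions} for the semigroup $\cT$ on $C_b(\Omega)$ (extended to $B_b(\Omega)$), with $\Omega = \{1,\dots,m\}\times\R^d$. Three things must be in place: (1) $\cT$ is a bounded semigroup of positive kernel operators that contains a strongly Feller operator; (2) a description of $\fix\cT$; (3) a supply of fixed measures of $\cT'$ large enough to separate $\fix\cT$. Item~(1) is already recorded in Proposition~\ref{p.sg} (boundedness, positivity, strong Feller property), noting that each $\bT_t$ is given by a kernel since it maps $B_b$ into itself with a dual acting on measures; so the real content is items (2) and (3).

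First I would establish the description $\fix\bT = \{\xi\one_{\R^d} : \xi \in F\}$, where $F = \bigcap_{x\in\R^d}\ker\bC(x)$. One inclusion is immediate: if $\bu \equiv \xi$ with $\bC(x)\xi = 0$ for all $x$, then $\bA(\xi\one) = \bC(\argument)\xi = 0$, so $\xi\one$ is a fixed point. For the converse I would replay Step~1 of the proof of \cite[Proposition~3.2]{addona_angiuli_lorenzi2019}: given $\bu \in \fix\bT$, each scalar component argument combined with the scalar Liouville-type statement for $A$ (valid since $A$ satisfies Hypothesis~\ref{hyp1} and has the Lyapunov function $\varphi$, so $\hat D = D_{\max}$ and $\ker A = \lh\{\one\}$, cf.\ Theorem~\ref{t.appl-whole-space}) forces $\bu$ to be constant, say $\bu \equiv \xi$; then $\bC(x)\xi = 0$ for every $x$, i.e.\ $\xi \in F$. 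Translating via the identification $C_b(\R^d;\R^m) \cong C_b(\Omega)$, this yields $\fix\cT = \{\chi_\xi : \xi \in F\}$ with $\chi_\xi(k,x) := \xi_k$, a space of dimension $\dim F$.

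Next I would produce, for each $\xi \in F$, an element of $\fix\cT'$ that pairs nontrivially with $\chi_\xi$. Let $0 \le \mu_0 \in \cM(\R^d)$ be a non-zero invariant measure for the scalar semigroup generated by $A$ (it exists by \cite[Theorem~6.3]{metafune_pallara_wacker2002} using the Lyapunov function $\varphi$). Define $\mu_\xi \in \cM(\Omega)$ by $\langle \mu_\xi, f\rangle = \sum_{k=1}^m \xi_k \langle \mu_0, f(k,\argument)\rangle$. Repeating Step~1 of the proof of \cite[Theorem~3.5]{addona_angiuli_lorenzi2019} — which, by Remark~\ref{r.dual-kernel}, only uses Hypothesis~\ref{hyp2}(b)(i),(ii) and not (b)(iii),(iv) — one checks $T_t'\mu_\xi = \mu_\xi$ for all $t$: this boils down to the identity $\langle \mu_\xi, T_t f\rangle = \langle\mu_\xi, f\rangle$, which in turn uses that $\bC(x)^*\xi = \bC(x)\xi = 0$ (here Remark~\ref{r.dual-kernel} is essential, since the adjoint system is governed by $\bC^*$) together with the $A$-invariance of $\mu_0$. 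Now for $\xi \in F\setminus\{0\}$ one may pick a component $\xi_{k_0} \ne 0$; testing $\chi_\xi$ against a positive multiple of $\mu_0$ supported morally at level $k_0$ — more precisely, using $\langle \mu_\xi, \chi_\xi\rangle = \sum_k \xi_k^2\, \mu_0(\R^d) > 0$ since $\mu_0 \ne 0$ — shows $\langle \mu_\xi, \chi_\xi\rangle \ne 0$. Since $\xi \mapsto \chi_\xi$ and $\xi \mapsto \mu_\xi$ are linear and injective on $F$, and the pairing $F \times F \to \R$, $(\xi,\zeta)\mapsto \langle\mu_\zeta,\chi_\xi\rangle = \big(\sum_k \xi_k\zeta_k\big)\mu_0(\R^d)$ is a non-degenerate bilinear form, $\fix\cT'$ separates $\fix\cT$.

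With separation established, Theorem~\ref{t.main-cont-functions} applies directly: for each $f \in C_b(\Omega)$, $T_tf$ converges uniformly on compact subsets of $\Omega$ to $Pf \in C_b(\Omega)$, and for each $\nu \in \cM(\Omega)$, $T_t'\nu$ converges in total variation norm to $Q\nu$; moreover $P$ is a kernel-operator projection onto $\fix\cT$ and $Q = P'$. Finally, $\operatorname{rank} P = \dim\fix\cT = \dim F \le m$, and by Proposition~\ref{p.weak-convergence-of-sg}, $\operatorname{rank} Q = \dim\fix\cT' = \dim\fix\cT = \dim F$ as well. The main obstacle I anticipate is the bookkeeping in re-deriving the two Step~1 arguments from \cite{addona_angiuli_lorenzi2019} without Hypothesis~\ref{hyp2}(b)(iii),(iv): one must track carefully where those hypotheses were used (essentially only to force $\dim F = 1$) and confirm that the invariance computation for $\mu_\xi$ genuinely needs only the dissipativity estimate and $\ker\bC(x) = \ker\bC(x)^*$ from Remark~\ref{r.dual-kernel}; everything downstream is then a routine application of the already-proven Tauberian theorem.
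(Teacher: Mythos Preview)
Your proposal is correct and follows essentially the same route as the paper: identify $\fix\cT$ via Step~1 of \cite[Proposition~3.2]{addona_angiuli_lorenzi2019}, construct the invariant measures $\mu_\xi$ via Step~1 of \cite[Theorem~3.5]{addona_angiuli_lorenzi2019} together with Remark~\ref{r.dual-kernel}, verify separation, and invoke Theorem~\ref{t.main-cont-functions}. Your explicit computation of the pairing $\langle\mu_\zeta,\chi_\xi\rangle = \big(\sum_k \xi_k\zeta_k\big)\mu_0(\R^d)$ as a non-degenerate bilinear form on $F$ is a nice touch that the paper leaves implicit; otherwise the arguments are the same.
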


\begin{rem}
	The assumption in Hypothesis~\ref{hyp2}(b)(ii) is crucial for the analysis in Subsections~\ref{s.appl-irred-systems} and~\ref{s.appl-red-systems} since it ensures that the semigroup $\bT$ (equivalently, $\cT$) is positive. Without this positivity assumption, the entire convergence theory for positive semigroups is no longer applicable.
	
	However, there is in alternative approach based on contractivity rather than positivity assumptions, at least if the coupling potential is bounded and the semigroup satisfies an additional compactness property. See \cite{dobrick_glueck_2021} for details. 
\end{rem}

\appendix

\section{A note a ultra Feller operators}

The purpose of this section is to prove Proposition~\ref{p.ultrafeller2} below which was mentioned in Section~\ref{s.strong-feller-semigroups}; it is not needed in the main text, but interesting in its own right.
The following lemma is needed for the proof.

\begin{lem}
	\label{l.bipolar}
	Let $\Omega$ be a Polish space. 
	Then the closed unit ball of $B_b(\Omega)$ is weak${}^*$-dense in the closed unit ball of the norm dual $\mathscr{M}(\Omega)^*$.
\end{lem}
\begin{proof}
	Denote the closed unit balls in $B_b(\Omega)$ and $\mathscr{M}(\Omega)^*$ by $B_{B_b}$ and $\mathscr{M}(\Omega)^*$, respectively.
	
	We consider the dual pair $(\mathscr{M}(\Omega)^*, \mathscr{M}(\Omega))$ and compute the bipolar of $B_{B_b}$. One has
	\[
		(B_{B_b})^\circ \coloneqq \{\mu \in \mathscr{M}(\Omega) : \langle f, \mu \rangle \le 1 \, \forall \, f \in B_{B_b}\} = 
B_{\mathscr{M}},
	\]
	where $B_{\mathscr{M}}$ denotes the closed unit ball of $\mathscr{M}(\Omega)$. In the last equality, the inclusion ``$\supseteq$'' is trivial, whereas the converse inclusion follows from the fact that $B_b(\Omega)$ is norming for $\mathscr{M}(\Omega)$, which follows from the definition of the total variation of a measure. Similarly, one can see that $(B_{\mathscr{M}})^\circ = B_{\mathscr{M}^*}$. Thus, $(B_{B_b})^{\circ\circ} = B_{\mathscr{M}}$. On the other hand, by the bipolar theorem, $(B_{B_b})^{\circ\circ}$ is the $\sigma(\mathscr{M}(\Omega)^*, \mathscr{M}(\Omega))$-closed convex hull of $B_{B_b}$ in $\mathscr{M}(\Omega)^*$. This proves that $B_{B_b}$ is indeed weak$^*$-dense in $B_{\mathscr{M}^*}$.
\end{proof}

\begin{prop}
	\label{p.ultrafeller2}
	Let $\Omega$ be a Polish space, let $T$ be a kernel operator on $B_b(\Omega)$ which is ultra Feller and set $S\coloneqq T'$. Then $S^*\mathscr{M}(\Omega)^* \subseteq C_b(\Omega)$.
\end{prop}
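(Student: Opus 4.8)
The plan is to exploit the ultra Feller property of $T$ to deduce that the kernel $k$ associated with $T$ (equivalently with $S = T'$) admits a dominating measure $\mu$, and then to run an argument analogous to Lemma~\ref{l.amcompact}(b). First I would fix a countable dense set $(x_n)_{n\in\N}$ in the Polish space $\Omega$ and set $\mu := \sum_n 2^{-n} k(x_n,\argument)$, which is a finite positive measure since $\sup_x |k|(x,\Omega) = \|T\| < \infty$. As $T$ is ultra Feller it is in particular strongly Feller, so by (the argument of) Proposition~\ref{p.dual-of-strong-feller-op} every measure $k(x,\argument)$ is absolutely continuous with respect to $\mu$: if $\mu(A) = 0$ then $k(x_n,A) = T\one_A(x_n) = 0$ for all $n$, and continuity of $T\one_A$ together with density of $(x_n)$ forces $k(x,A) = 0$ for all $x$.

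Next I would invoke Lemma~\ref{l.amcompact}(b) applied to $S = T'$ (which is the kernel operator on $\cM(\Omega)$ with kernel $k$): since $k(x,\argument) \ll \mu$ for every $x$, the norm adjoint $S^*$ maps $\cM(\Omega)^*$ into $B_b(\Omega)$. Concretely, for $\varphi \in \cM(\Omega)^*$ one represents the restriction of $\varphi$ to the band $\{\mu\}^{\perp\perp} \cong L^1(\mu)$ by a function $\tilde f \in L^\infty(\mu)$, picks a bounded measurable representative $f \in B_b(\Omega)$, and checks $S^*\varphi = S'f = Tf$ by the duality computation already carried out in the proof of Lemma~\ref{l.amcompact}(b). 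So at this point we know $S^*\varphi \in B_b(\Omega)$, and it remains to upgrade "bounded measurable" to "bounded continuous".

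The remaining (and main) point is that $Tf \in C_b(\Omega)$ for \emph{every} $f \in B_b(\Omega)$, which is precisely the strong Feller property of $T$ — and this is immediate from the hypothesis that $T$ is ultra Feller, since ultra Feller operators are strongly Feller by definition. Hence $S^*\varphi = Tf \in C_b(\Omega)$, and as $\varphi \in \cM(\Omega)^*$ was arbitrary we conclude $S^*\cM(\Omega)^* \subseteq C_b(\Omega)$. In this formulation the ultra Feller hypothesis is in fact only used through the strong Feller property; the genuinely stronger content of "ultra Feller" (relative $\beta_0$-compactness of images of bounded sets) is not needed for this particular conclusion, and it would be worth remarking that the statement already holds for any strongly Feller kernel operator. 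The only mild subtlety to watch is the measurability of the chosen representative $f$ and the applicability of Lemma~\ref{l.amcompact}(b), which requires nothing beyond the absolute continuity established in the first step — no countable generation of $\Sigma$ is needed here since parts~(a) and~(b) of that lemma do not use it.
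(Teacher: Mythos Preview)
Your proof is correct and takes a genuinely different route from the paper's. The paper argues via weak$^*$-density: it shows, using the bipolar theorem, that the unit ball of $B_b(\Omega)$ is $\sigma(\cM(\Omega)^*,\cM(\Omega))$-dense in the unit ball of $\cM(\Omega)^*$, approximates a given $\varphi$ by a bounded net $(f_\alpha)\subseteq B_b(\Omega)$, and then uses the ultra Feller compactness to extract a $\beta_0$-convergent subnet $Tf_\alpha\to g\in C_b(\Omega)$; weak$^*$-continuity of $S^*$ forces $S^*\varphi=g$. You instead recycle the Radon--Nikodym machinery already set up in Proposition~\ref{p.dual-of-strong-feller-op} and Lemma~\ref{l.amcompact}(b) to obtain $S^*\varphi=S'f=Tf$ for some $f\in B_b(\Omega)$, and then observe that $Tf\in C_b(\Omega)$ by the strong Feller property.

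Your approach is more elementary (no bipolar theorem, no compactness extraction) and, as you correctly point out, uses only the strong Feller property --- so it actually yields the sharper conclusion that $S^*\cM(\Omega)^*\subseteq C_b(\Omega)$ already for strongly Feller $T$. The paper's argument genuinely needs the $\beta_0$-compactness and hence the ultra Feller hypothesis as stated. One small repair: Proposition~\ref{p.ultrafeller2} does not assume $T$ positive, so your $\mu:=\sum_n 2^{-n}k(x_n,\argument)$ need not be a positive measure; write $\mu:=\sum_n 2^{-n}|k|(x_n,\argument)$ instead (the absolute continuity step still goes through since $\mu(A)=0$ forces $k(\,\cdot\,,B)=0$ for every measurable $B\subseteq A$). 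With that adjustment the argument of Lemma~\ref{l.amcompact}(a)(b) carries over verbatim, as its proof nowhere uses positivity of $S$.
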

\begin{proof}
	First one observes that $S^*\mathscr{M}(\Omega)^* \subseteq C_b(\Omega)$. To see this, let $\varphi \in \mathscr{M}(\Omega)^*$ be given. By Lemma~\ref{l.bipolar} there exists a bounded net $(f_\alpha) \subseteq B_b(\Omega)$ such that $f_\alpha \rightharpoonup^* \varphi$. As $T$ is ultra Feller and $f_\alpha$ is bounded, $Tf_\alpha$ is relatively $\beta_0$-compact. Passing to a subnet, we may assume that $Tf_\alpha$ converges with respect to $\beta_0$ to a bounded and continuous function $g$. As this entails weak$^*$-convergence in $\mathscr{M}(\Omega)^*$ and since $S^*$ is weak$^*$-continuous (so that $S^*f_\alpha \rightharpoonup^* S^*\varphi$), it follows that $S^*\varphi = g \in C_b(\Omega)$.
\end{proof}

\section{A note on the kernel of dissipative matrices}

The following result is mentioned in Subsection~\ref{s.appl-red-systems}. For the convenience of the reader, we include its simple proof.

\begin{prop} \label{p.fixed-space-of-contractions}
	Endow $\R^{m \times m}$ with the operator norm induced by the Euclidean norm on $\R^m$.
	\begin{enumerate}[\upshape (a)]
		\item If $D \in \R^{m \times m}$ and $\|D\| \le 1$, then $\fix D = \fix D^*$.
		
		\item If $C \in \R^{m \times m}$ and $\langle Cy, y \rangle \le 0$ for all $y \in \R^d$, then $\ker C = \ker C^*$.
	\end{enumerate}
\end{prop}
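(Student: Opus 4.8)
The plan is to establish part~(a) by an elementary Hilbert space computation and then to deduce part~(b) from it via a Cayley transform (with a more hands-on alternative also sketched).

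For part~(a), let $x \in \fix D$, so $Dx = x$. The key is the identity $\|D^*x - x\|^2 = \|D^*x\|^2 - 2\langle D^*x, x\rangle + \|x\|^2$. Since $\langle D^*x, x\rangle = \langle x, Dx\rangle = \|x\|^2$ and $\|D^*x\| \le \|D^*\|\,\|x\| = \|D\|\,\|x\| \le \|x\|$, the right-hand side is at most $\|x\|^2 - 2\|x\|^2 + \|x\|^2 = 0$, which forces $D^*x = x$. Hence $\fix D \subseteq \fix D^*$. Applying the same reasoning to $D^*$ in place of $D$ — legitimate because $\|D^*\| = \|D\| \le 1$ and $(D^*)^* = D$ — gives the reverse inclusion, so $\fix D = \fix D^*$.

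For part~(b), I would use the Cayley transform. From $\langle Cy, y\rangle \le 0$ one gets $\|(\lambda - C)y\|^2 \ge \lambda^2 \|y\|^2$ for every $\lambda > 0$, so $\lambda - C$ is invertible; set $D \coloneqq (\lambda + C)(\lambda - C)^{-1}$. Writing $z \coloneqq (\lambda - C)^{-1}y$ and expanding, $\|Dy\|^2 - \|y\|^2 = 4\lambda \langle Cz, z\rangle \le 0$, so $\|D\| \le 1$. A short computation shows $\fix D = \ker C$: indeed $Dy = y$ is equivalent to $(\lambda + C)z = (\lambda - C)z$, i.e.\ $Cz = 0$, and then $y = (\lambda - C)z = \lambda z$. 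Since $\lambda + C$ and $(\lambda - C)^{-1}$ commute with $C$ (both are polynomials in $C$), the adjoint $D^*$ equals the Cayley transform $(\lambda + C^*)(\lambda - C^*)^{-1}$ of $C^*$, which is again dissipative because $\langle C^*y, y\rangle = \langle y, Cy\rangle = \langle Cy, y\rangle \le 0$ over $\R$; hence $\fix D^* = \ker C^*$ by the same computation. Now part~(a) yields $\ker C = \fix D = \fix D^* = \ker C^*$. A more elementary alternative, avoiding the Cayley transform altogether: for $x \in \ker C$, arbitrary $y \in \R^m$ and $s \in \R$, dissipativity applied to $sx + y$ gives (using $C(sx+y) = Cy$) the inequality $s\langle Cy, x\rangle + \langle Cy, y\rangle \le 0$; an affine function of $s$ bounded above on all of $\R$ is constant, so $\langle Cy, x\rangle = 0$ for all $y$, i.e.\ $C^*x = 0$, and one concludes by symmetry since $C^*$ is again dissipative.

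I do not expect a genuine obstacle: the statement rests on two one-line Hilbert space estimates. The only points requiring care are bookkeeping ones — working consistently over $\R$ so that $\|D^*\| = \|D\|$ and $\langle C^*y, y\rangle = \langle Cy, y\rangle$, and, in the Cayley approach, observing that $\lambda + C$ and $(\lambda - C)^{-1}$ commute so that $D^*$ really is the Cayley transform of $C^*$.
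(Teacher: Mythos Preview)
Your proof is correct, but the route differs from the paper's in both parts. For~(a), the paper argues via the mean ergodic projection $P = \lim_{n\to\infty} \frac{1}{n}\sum_{k=0}^{n-1} D^k$: since $D$ is contractive, so is $P$, and a contractive projection on a Hilbert space is orthogonal, whence $P = P^*$ and $\fix D = P\R^m = P^*\R^m = \fix D^*$. Your direct computation of $\|D^*x - x\|^2$ is equally short and avoids invoking mean ergodicity. For~(b), the paper takes the exponential rather than the Cayley transform: dissipativity gives $\|e^{tC}\| \le 1$ for $t \ge 0$, so by~(a) $\fix e^{tC} = \fix e^{tC^*}$ for all $t$, which yields $\ker C = \ker C^*$. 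This is a touch slicker than the Cayley route (no need to verify that $D^*$ is the Cayley transform of $C^*$), but both are standard passages from the multiplicative to the additive picture. Your ``elementary alternative'' via the affine function $s \mapsto s\langle Cy, x\rangle + \langle Cy, y\rangle$ is in fact the most economical of all the arguments on the table: it proves~(b) directly without~(a) and without any transform. One tiny quibble: $(\lambda - C)^{-1}$ is not literally a polynomial in $C$, but it does commute with $C$, which is all you need.
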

\begin{proof}
	(a) Let $P \in \R^{d \times d}$ denote the mean ergodic projection associated to $D$. Then $\fix D = P\R^d$ and $\fix D^* = P^* \R^d$. As $D$ is contractive, so is $P$; hence, the projection $P$ is orthogonal, i.e., $P = P^*$.
	
	(b) The dissipativity estimate for $C$ implies that $\|e^{tC}\| \le 1$ for each $t \in [0,\infty)$. Hence, it follows from~(a) that $\fix (e^{tC}) = \fix (e^{tC^*})$ for each $t \in [0,\infty)$, which implies the assertion.
\end{proof}

The same result remains true in infinite dimensions \cite[Corollary~8.7]{eisner2015}.

\bibliographystyle{abbrv}
\bibliography{stability}

\end{document}